\newtheorem{lemma}{{\sc Lemma}}[section]
\newtheorem{corollary}[lemma]{{\sc Corollary}}
\newtheorem{proposition}[lemma]{{\sc Proposition}}
\newtheorem{theorem}[lemma]{{\sc Theorem}}
\newtheorem{remark}[lemma]{{\sc Remark}}
\newtheorem{example}[lemma]{{\sc Example}}
\numberwithin{equation}{section}
\def\Ga{{\mathfrak{a}}}
\def\Gb{{\mathfrak{b}}}
\def\Gg{{\mathfrak{g}}}
\def\Gh{{\mathfrak{h}}}
\def\Gn{{\mathfrak{n}}}
\def\GA{{\mathfrak{A}}}
\def\GB{{\mathfrak{B}}}
\def\BA{{\mathbf{A}}}
\def\BC{{\mathbf{C}}}
\def\BF{{\mathbf{F}}}
\def\BQ{{\mathbf{Q}}}
\def\BZ{{\mathbf{Z}}}
\def\CB{{\mathcal B}}
\def\CC{{\mathcal C}}
\def\CF{{\mathcal F}}
\def\CH{{\mathcal H}}
\def\CO{{\mathcal O}}
\def\CI{{\mathcal I}}
\def\CP{{\mathcal P}}
\def\CR{{\mathcal R}}
\def\CS{{\mathcal S}}
\def\CT{{\mathcal T}}
\def\alg{{\rm alg}}
\def\Comod{\mathop{\rm Comod}\nolimits}
\def\End{\mathop{\rm{End}}\nolimits}
\def\gr{{\mathop{\rm{gr}}\nolimits}}
\def\Hom{\mathop{\rm Hom}\nolimits}
\def\id{\mathop{\rm id}\nolimits}
\def\Ind{\mathop{\rm Ind}\nolimits}
\def\inte{{\mathop{\rm{int}}\nolimits}}
\def\Image{\mathop{\rm Im}\nolimits}
\def\irr{{\rm irr}}
\def\Ker{\mathop{\rm Ker\hskip.5pt}\nolimits}
\def\Mod{\mathop{\rm Mod}\nolimits}
\def\op{{\mathop{\rm op}\nolimits}}
\def\Proj{\mathop{\rm Proj}\nolimits}
\def\res{\mathop{\rm res}\nolimits}
\def\Res{\operatorname{Res}\nolimits}
\def\Spec{{\rm{Spec}}}
\def\Tor{{\rm{Tor}}}
\begin{document}
\title[Affine open covering of the quantized flag manifolds]{Affine open covering of the quantized flag manifolds at roots of unity}
\author{Toshiyuki TANISAKI}
\address{
9-3-12 Jiyugaoka, Munakata, Fukuoka, 811-4163 Japan}
\email{ttanisaki@icloud.com}
\begin{abstract}
We show that the quantized flag manifold at a root of unity has natural affine open covering parametrized by the elements of the Weyl group.
In particular, the quantized flag manifold turns out to be a quasi-scheme in the sense of Rosenberg \cite{R}.
\end{abstract}
\maketitle

\section{Introduction}
Let $G$ be a connected semisimple algebraic group over the complex number field $\BC$, 
and let
$B$, $B^+$ be Borel subgroups of $G$ such that $B\cap B^+$ is a maximal torus of $G$.
The homogeneous space $\CB=G/B$ is a projective algebraic variety called the flag manifold.
Let $W$ be the Weyl group of $G$.
We have an affine open covering 
$\CB=\bigcup_{w\in W}\CB^w$, where
$\CB^w=wB^+B/B$.
Let $\CR$ be the homogeneous coordinate algebra of $\CB$, and let $\CR^w$ be the coordinate algebra of $\CB^w$ so that 
\[
\CB=\Proj\,\CR, 
\qquad
\CB^w=\Spec\, \CR^w\quad(w\in W).
\]

Let us consider the situation where $G$ is replaced by the corresponding quantum group.
Let $K$ be a field equipped with $q\in K^\times$.
Using the quantum group we can naturally define $q$-analogues $\CR_{K,q}$, $\CR^w_{K,q}$ of 
$\CR$, $\CR^w$ respectively.
Here, $\CR_{K,q}$ is  a graded $K$-algebra, and  $\CR^w_{K,q}$ is a $K$-algebra.
We have $\CR_{\BC,1}\cong\CR$ and $\CR^w_{\BC,1}\cong\CR^w$.
A major difference compared to the ordinary case $q=1$  is the fact that  $\CR_{K,q}$ and  $\CR^w_{K,q}$ are non-commutative in general.
Hence  in order to understand the ``quantized flag manifold'' in a geometric manner,
we need the language of non-commutative algebraic geometry, which has been developed 
by Artin-Zhang \cite{AZ}, Ver\"{e}vkin \cite{V}, Rosenberg \cite{R}
following Manin's idea \cite{M}.
Using $\CR_{K,q}$ and  $\CR^w_{K,q}$, 
we can define
as in Rosenberg \cite{R}, Lunts-Rosenberg \cite{LR} (see also Joseph \cite{Jo0}) the  abelian categories 
\begin{equation}
\label{eq:qcoh}
\Mod(\CO_{\CB_{K,q}}),\qquad
\Mod(\CO_{\CB^w_{K,q}})\quad (w\in W),
\end{equation}
which are regarded as the categories of ``quasi-coherent sheaves'' on the virtual spaces
\[
\CB_{K,q}=\Proj\,\CR_{K,q}, 
\qquad
\CB^w_{K,q}=\Spec\, \CR^w_{K,q}\quad(w\in W)
\]
respectively, 
and the exact functors
\begin{equation}
\label{eq:inc}
(i^w_{K,q})^*:\Mod(\CO_{\CB_{K,q}})\to
\Mod(\CO_{\CB^w_{K,q}})
\qquad(w\in W).
\end{equation}
In order to verify that $\Mod(\CO_{\CB_{K,q}})$ defines a quasi-scheme $\CB_{K,q}$ in the sense of \cite{R} we need to show the patching property
\begin{equation}
\label{eq:patch}
M\in\Mod(\CO_{\CB_{K,q}}), \quad
(i^w_{K,q})^*M=0\;\;(\forall w\in W)\;\;
\Longrightarrow\;\; M=0.
\end{equation}
This holds for $q=1$ since
$\CB_{K,1}$ is isomorphic to the ordinary flag manifold over $K$.
From this we can derive the property \eqref{eq:patch} when $q$ is transcendental over the prime field $K_0$ of $K$ (see \cite{LR}).

The main result of this paper is \eqref{eq:patch} where $q$ is a root of unity.
By the aid of Lusztig's quantum Frobenius homomorphism
we can reduce its proof to the case where $q=\pm1$.
In the case $q=1$ \eqref{eq:patch} is a classically known fact as mentioned above.
The proof is rather involved in the case $q=-1$.
We will construct 
an isomorphism $\CR_{K,-1}\cong \CR_{K,1}$ of graded vector spaces.
Although this isomorphism does not preserve the ring structure, it satisfies some favorable properties so that
we can derive \eqref{eq:patch} for $q=-1$ from that for $q=1$.

\section{Quantized enveloping algebras}
\label{sect:QE}
\subsection{}
Let $G$ be a connected reductive algebraic group over $\BC$, and let $H$ be a maximal torus of $G$.
We denote by $(X,\Delta,Y,\Delta^\vee)$ the  root datum associated to $G$ and $H$.
Namely, 
\[
X=\Hom(H,\BC^\times), \qquad
Y=\Hom(\BC^\times, H),
\]
and $\Delta$ (resp.\ $\Delta^\vee$) is the set of roots (resp.\ coroots).
The coroot corresponding to $\alpha\in\Delta$ is denoted as $\alpha^\vee\in\Delta^\vee$.
We fix a set of simple roots $\{\alpha_i\mid i\in I\}$ of $\Delta$, and denote the corresponding set of positive roots by $\Delta^+$.
The Weyl group $W$ is the subgroup of $GL(\Gh^*)$ generated by the simple reflections
\[
s_i:\Gh^*\to\Gh^* \quad(
\lambda\mapsto
\lambda-\langle\lambda,\alpha_i^\vee\rangle\alpha_i)
\]
for $i\in I$.
Set
\begin{gather*}
Q=\sum_{\alpha\in\Delta}\BZ\alpha\subset X,
\qquad
Q^\vee=\sum_{\alpha\in\Delta}\BZ\alpha^\vee\subset Y,
\\
Q^+=
\sum_{\alpha\in\Delta^+}\BZ_{\geqq0}\alpha\subset Q,
\qquad
X^+=
\{\lambda\in X\mid
\langle\lambda,\alpha_i^\vee\rangle\geqq0\;\;(i\in I)\}
\subset X.
\end{gather*}
For $i, j\in I$ we set $a_{ij}=\langle\alpha_j,\alpha_i^\vee\rangle$.

Let $\Gg$ and $\Gh$ be the Lie algebras of $G$ and $H$ respectively.
We will identify $X$ (resp.\ $Y$) with a $\BZ$-lattice of $\Gh^*$ (resp.\ $\Gh$).
We have the root space decomposition
\[
\Gg=\Gh\oplus
\left(
\bigoplus_{\alpha\in\Delta}\Gg_\alpha
\right),
\qquad
\Gg_\alpha=\{x\in\Gg\mid [h,x]=\alpha(h)x\;(h\in\Gh)\}.
\]
For $i\in I$ we take 
$\overline{e}_i\in\Gg_{\alpha_i}$, 
$\overline{f}_i\in\Gg_{-\alpha_i}$ such that
$[\overline{e}_i,\overline{f}_i]=\alpha_i^\vee$.

We denote by $B$, $B^+$ the Borel subgroups of $G$ with Lie algebras
\[
\Gb=\Gh\oplus
\left(
\bigoplus_{\alpha\in\Delta^+}\Gg_{-\alpha}
\right),
\qquad
\Gb^+=\Gh\oplus
\left(
\bigoplus_{\alpha\in\Delta^+}\Gg_{\alpha}
\right)
\]
respectively.
We denote by $N$, $N^+$ the unipotent radicals of $B$, $B^+$ respectively.
Their Lie algebras are given by
\[
\Gn=
\bigoplus_{\alpha\in\Delta^+}\Gg_{-\alpha},
\qquad
\Gn^+=
\bigoplus_{\alpha\in\Delta^+}\Gg_{\alpha}
\]
respectively.

\subsection{}
For a Lie algebra $\Ga$ we denote its enveloping algebra by $\overline{U}(\Ga)$.
For $\lambda\in X$ we define a character 
$\overline{\chi}_\lambda:\overline{U}(\Gh)\to \BC$ 
by
\[
\overline{\chi}_\lambda(h)=\langle \lambda,h\rangle
\qquad(h\in \Gh).
\]
For $n\in\BZ_{\geqq0}$ set
\[
\begin{pmatrix}
x
\\
n
\end{pmatrix}
=
\frac{x(x-1)\dots(x-n+1)}{n!}
\in\BQ[x].
\]
We denote by $\overline{U}_\BZ(\Gh)$  the $\BZ$-subalgebra of $\overline{U}(\Gh)$ generated by the elements
\[
\begin{pmatrix}
y
\\
n
\end{pmatrix}
\quad(y\in Y,\; n\in\BZ_{\geqq0}).
\]
For $i\in I$ and $n\in\BZ_{\geqq0}$ we define 
$\overline{e}_i^{(n)},\;\overline{f}_i^{(n)}\in \overline{U}(\Gg)$
by
\[
\overline{e}_i^{(n)}=\frac{\overline{e}_i^n}{n!},
\qquad
\overline{f}_i^{(n)}=\frac{\overline{f}_i^n}{n!}.
\]
We define $\BZ$-subalgebras 
$\overline{U}_\BZ(\Gn)$, $\overline{U}_\BZ(\Gn^+)$, 
$\overline{U}_\BZ(\Gb)$, 
$\overline{U}_\BZ(\Gg)$
of $\overline{U}(\Gg)$ by
\begin{gather*}
\overline{U}_\BZ(\Gn)
=
\langle 
\overline{f}^{(n)}_i\mid i\in I, n\geqq0
\rangle,
\qquad
\overline{U}_\BZ(\Gn^+)
=
\langle 
\overline{e}^{(n)}_i\mid i\in I, n\geqq0
\rangle,
\\
\overline{U}_\BZ(\Gb)
=
\langle 
\overline{U}_\BZ(\Gh), \overline{U}_\BZ(\Gn)
\rangle,
\qquad
\overline{U}_\BZ(\Gg)
=
\langle 
\overline{U}_\BZ(\Gh), \overline{U}_\BZ(\Gn), \overline{U}_\BZ(\Gn^+)
\rangle.
\end{gather*}

For a commutative ring $R$ we set
\begin{gather*}
\overline{U}_R(\Gh)=R\otimes_\BZ \overline{U}_\BZ(\Gh),
\quad
\overline{U}_R(\Gn)=R\otimes_\BZ \overline{U}_\BZ(\Gn),
\quad
\overline{U}_R(\Gn^+)=R\otimes_\BZ \overline{U}_\BZ(\Gn^+),
\\
\overline{U}_R(\Gb)=R\otimes_\BZ \overline{U}_\BZ(\Gb),
\quad
\overline{U}_R(\Gg)=R\otimes_\BZ \overline{U}_\BZ(\Gg).
\end{gather*}
They are  Hopf algebras over $R$.
Note that for $\lambda\in X$ we have $\overline{\chi}_\lambda(\overline{U}_\BZ(\Gh))\subset\BZ$.
Hence 
the character $\overline{\chi}_\lambda:\overline{U}(\Gh)\to \BC$ induces the character 
$\overline{\chi}_\lambda:\overline{U}_R(\Gh)\to R$ 
of $\overline{U}_R(\Gh)$.
\subsection{}
For an integer $m$ we define its $t$-analogues $[m]_t$ and 
$\{m\}_t$ by
\[
[m]_t=\frac{t^m-t^{-m}}{t-t^{-1}}\in\BZ[t, t^{-1}], \qquad
\{m\}_t=
\frac{t^m-1}{t-1}\in\BZ[t]
.
\]
For a non-negative integer $n$ we set
\[
[n]_t!=[1]_t[2]_t\dots[n]_t\in\BZ[t, t^{-1}], 
\qquad
\{n\}_t!=\{1\}_t\{2\}_t\dots\{n\}_t\in\BZ[t]
.
\]
We have
\[
[m]_t=t^{-m+1}\{m\}_{t^2},
\qquad
[n]_t!=t^{-n(n-1)/2}\{n\}_{t^2}!.
\]

\subsection{}
We fix a $W$-invariant symmetric bilinear form
\begin{equation}
\label{eq:bilin}
(\;,\;):
\sum_{\alpha\in\Delta}\BQ\alpha
\times 
\sum_{\alpha\in\Delta}\BQ\alpha
\to\BQ
\end{equation}
satisfying $(\alpha,\alpha)\in2\BZ$ for any $\alpha\in\Delta$.
For $\alpha\in\Delta$ we set $d_\alpha=(\alpha,\alpha)/2$, and for $i\in I$ we set $d_i=d_{\alpha_i}$.

Set $\BF=\BQ(q)$.
The quantized enveloping algebra $U_\BF(\Gg)$ is the associative algebra over $\BF$ with 1 generated by the elements
\[
k_y\;\;(y\in Y),\qquad
e_i,\;\; f_i\;\;(i\in I)
\]
satisfying the relations
\begin{align*}
&k_0=1,\qquad
k_{y_1}k_{y_2}=k_{y_1+y_2}\quad(y_1, y_2\in Y),
\\
&k_ye_i=q^{\langle\alpha_i,y\rangle}e_ik_y,
\quad
k_yf_i=q^{-\langle\alpha_i,y\rangle}f_ik_y
\quad(y\in Y, i\in I),
\\
&
e_if_j-f_je_i=\delta_{ij}\frac{k_i-k_i^{-1}}{q_i-q_i^{-1}}
\qquad(i, j\in I),
\\
&
\sum_{r=0}^{1-a_{ij}}
(-1)^r
e_i^{(1-a_{ij}-r)}e_je_i^{(r)}=
0
\qquad
(i, j\in I, \; i\ne j),
\\
&
\sum_{r=0}^{1-a_{ij}}
(-1)^r
f_i^{(1-a_{ij}-r)}f_jf_i^{(r)}=
0
\qquad
(i, j\in I, \; i\ne j).
\end{align*}
Here, $q_i=q^{d_i}$, $k_i=k_{d_i\alpha_i^\vee}$ for $i\in I$, 
and
$e_i^{(n)}=e_i^n/[n]_{q_i}!$, 
$f_i^{(n)}=f_i^n/[n]_{q_i}!$ for $i\in I$, $n\in\BZ_{\geqq0}$.

We will use the Hopf algebra structure of $U_\BF(\Gg)$ given by 
\begin{align*}
&
\Delta(k_y)=k_y\otimes k_y,
\\
&
\Delta(e_i)=e_i\otimes 1+k_i\otimes e_i,
\quad
\Delta(f_i)=f_i\otimes k_i^{-1}+1\otimes f_i,
\\
&
\varepsilon(k_y)=1,\qquad
\varepsilon(e_i)=\varepsilon(f_i)=0,
\\
&
S(k_y)=k_y^{-1},
\quad
S(e_i)=-k_i^{-1}e_i,
\quad
S(f_i)=-f_ik_i
\end{align*}
for $y\in Y$, $i\in I$.
We define $\BF$-subalgebras $U_\BF(\Gh)$, $U_\BF(\Gb)$, $U_\BF(\Gn)$, $U_\BF(\Gn^+)$ of $U_\BF(\Gg)$ by
\begin{align*}
&U_\BF(\Gh)=\langle k_y\mid y\in Y\rangle,
\qquad
U_\BF(\Gb)=\langle k_y, f_i\mid y\in Y, i\in I\rangle,
\\
&
U_\BF(\Gn)=\langle f_i\mid i\in I\rangle,
\qquad
U_\BF(\Gn^+)=\langle e_i\mid i\in I\rangle.
\end{align*}
Then we have
\[
U_\BF(\Gh)=\bigoplus_{y\in Y}\BF k_y.
\]
For $\lambda\in X$ we define a character 
$\chi_\lambda:U_\BF(\Gh)\to\BF$ by
\[
\chi_\lambda(k_y)=q^{\langle \lambda,y\rangle}
\qquad(y\in Y).
\]
For $\gamma\in Q^+$ set
\begin{align*}
U_\BF(\Gn)_{-\gamma}=&
\{
u\in U_\BF(\Gn)\mid k_yuk_y^{-1}=q^{-\langle\gamma,y\rangle}u\;\;(y\in Y)\},
\\
U_\BF(\Gn^+)_{\gamma}=&
\{
u\in U_\BF(\Gn^+)\mid k_yuk_y^{-1}=q^{\langle\gamma,y\rangle}u\;\;(y\in Y)\}.
\end{align*}
Then we have
\[
U_\BF(\Gn)=\bigoplus_{\gamma\in Q^+} U_\BF(\Gn)_{-\gamma},
\qquad
U_\BF(\Gn^+)=\bigoplus_{\gamma\in Q^+}
U_\BF(\Gn^+)_{\gamma}.
\]

\subsection{}
Set $\BA=\BZ[q,q^{-1}]$.
We define an $\BA$-subalgebra $U_\BA(\Gh)$ of $U_\BF(\Gh)$ by
\[
U_\BA(\Gh)=
\{u\in U_\BF(\Gh)
\mid
\chi_\lambda(u)\in\BA\;(\forall \lambda\in X)\}.
\]
By the definition of $U_\BA(\Gh)$, the character 
$\chi_\lambda:U_\BF(\Gh)\to\BF$ for $\lambda\in X$ induces an algebra homomorphism
$\chi_\lambda:U_\BA(\Gh)\to\BA$.
For 
$y\in Y$, $n\in\BZ_{\geqq0}$, $m\in\BZ$ we have
\[
k_y\in U_\BA(\Gh),
\qquad
\begin{Bmatrix}
q^mk_y
\\
n
\end{Bmatrix}_q
\in
U_\BA(\Gh),
\]
where, for $n\in\BZ_{\geqq0}$, we set
\[
\begin{Bmatrix}
x
\\
n
\end{Bmatrix}_t
=
\prod_{s=1}^n
\frac{xt^{-s+1}-1}{t^s-1}\in(\BQ(t))[x].
\]

The proof of the following result is easily reduced to the case where $Y$ is of rank one.
Details are omitted.
\begin{lemma}
\label{lem:ha}
Let $y_1,\dots, y_m$ be a basis of the free $\BZ$-module $Y$.
\begin{itemize}
\item[{\rm(i)}]{\rm(see \cite[Theorem 3.1]{DL})}
$U_\BA(\Gh)$ is a free $\BA$-module with basis
\[
\prod_{a=1}^m
\begin{Bmatrix}
k_{y_a}
\\
n_a
\end{Bmatrix}_q
k_{y_a}^{-\lfloor (n_a+1)/2\rfloor}
\qquad
(n_1,\dots, n_m\in\BZ_{\geqq0}).
\]
\item[{\rm(ii)}]
$U_\BA(\Gh)\cap
\BF[k_{y_1},\dots, k_{y_m}]$
 is a free $\BA$-module with basis
\[
\prod_{a=1}^m
\begin{Bmatrix}
k_{y_a}
\\
n_a
\end{Bmatrix}_q
\qquad
(n_1,\dots, n_m\in\BZ_{\geqq0}).
\]
\item[{\rm(iii)}]
The ring $U_\BA(\Gh)$ is the localization of $U_\BA(\Gh)\cap
\BF[k_{y_1},\dots, k_{y_m}]$ with respect to the multiplicative set 
$\{k_{n_1y_1+\dots+n_my_m}\mid
n_1,\dots, n_m\in\BZ_{\geqq0}\}$.
\end{itemize}
\end{lemma}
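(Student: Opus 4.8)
The plan is to reduce everything to the rank-one case, where the free $\BZ$-module $Y$ has a single generator, say $Y = \BZ y$. In that case $U_\BF(\Gh) = \bigoplus_{n\in\BZ}\BF k_{ny} = \BF[k_y, k_y^{-1}]$, the ring of Laurent polynomials in $z := k_y$, and the character $\chi_\lambda$ sends $z\mapsto q^{\langle\lambda,y\rangle}$. As $\lambda$ ranges over $X$, the exponent $\langle\lambda,y\rangle$ ranges over all of $\BZ$ (since $y$ is a basis vector, the pairing $X\times Y\to\BZ$ is the standard one, hence surjective in each variable), so $U_\BA(\Gh)$ in the rank-one case is exactly the set of $f\in\BF[z,z^{-1}]$ with $f(q^a)\in\BA=\BZ[q,q^{-1}]$ for every $a\in\BZ$.

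First I would prove part (iii), since it is essentially formal: multiplying any element of $U_\BA(\Gh)$ by a suitable power of $k_y$ (for general $Y$, by a suitable monomial in the $k_{y_a}$) clears all negative powers, and the resulting element still satisfies the integrality condition on all $\chi_\lambda$ because $\chi_\lambda(k_y)=q^{\langle\lambda,y\rangle}$ is a unit in $\BA$; this gives $U_\BA(\Gh)=(U_\BA(\Gh)\cap\BF[k_{y_1},\dots,k_{y_m}])[\{k_{y_a}^{-1}\}]$, and one checks the multiplicative set in the statement generates the same localization. So (iii) follows from (ii), and (i) follows from (ii) and (iii) together with the observation that the stated basis in (i) is obtained from the monomial basis in (ii) by multiplying the $a$-th factor by $k_{y_a}^{-\lfloor(n_a+1)/2\rfloor}$, which is an invertible change of basis over $\BA$ once one knows the localization structure — one must check that these particular shifts do land inside $U_\BA(\Gh)$ and that every element is an $\BA$-combination of them, but this reduces to a bookkeeping computation with $\begin{Bmatrix}x\\n\end{Bmatrix}_q$. (This is where the cited \cite[Theorem 3.1]{DL} does the work.)

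The real content is part (ii), i.e.\ showing that in the rank-one case the $\BA$-module $A := \{f\in\BF[z]\mid f(q^a)\in\BA\ \forall a\in\BZ_{\geqq0}\}$ — together with the constraint coming from negative $a$, which I claim is automatic for polynomials in $z$ alone after rescaling — is free with basis the ``$q$-binomial'' polynomials $\beta_n(z):=\begin{Bmatrix}z\\n\end{Bmatrix}_q=\prod_{s=1}^n\frac{zq^{-s+1}-1}{q^s-1}$, $n\geqq0$. The key points: (a) each $\beta_n$ is a polynomial of degree $n$ in $z$ with $\beta_n(q^a)=\begin{Bmatrix}q^a\\n\end{Bmatrix}_q\in\BA$ for all $a$ — indeed this is the Gaussian binomial coefficient $\genfrac[]{0pt}{}{a}{n}_q$ (up to a unit), which lies in $\BZ[q]$; so $\bigoplus_n\BA\beta_n\subseteq A$. (b) Conversely, given $f\in A$ of degree $N$, one performs a ``$q$-Newton forward-difference'' expansion: writing the $q$-difference operator and evaluating successively at $z=1=q^0, q^1, q^2,\dots$, the coefficients $c_n$ in $f=\sum_{n=0}^N c_n\beta_n$ are obtained by an explicit triangular linear system from the values $f(q^0),\dots,f(q^N)$, with the diagonal entries being units of $\BA$ (powers of $q$ times the leading coefficient normalization of $\beta_n$, which is $\prod_{s=1}^n(q^s-1)^{-1}\cdot q^{-n(n-1)/2}$ — wait, one must be careful that the \emph{inverse} system has entries in $\BA$). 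The honest statement is: $\{\beta_0(q^j),\dots,\beta_N(q^j)\}_{j=0}^N$ forms a matrix over $\BA$ whose determinant is a unit in $\BA$ (a monomial in $q$), so it is invertible over $\BA$; hence the $c_n$ lie in $\BA$. This is the $q$-analogue of the fact that $\binom{x}{n}$ is the $\BZ$-basis of integer-valued polynomials, and the determinant computation (a $q$-Vandermonde-type evaluation) is the technical heart.

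**The main obstacle** I expect is precisely this last determinant/triangularity computation: verifying that the transition matrix between the monomial basis $1, z, \dots, z^N$ (or between the evaluation functionals $f\mapsto f(q^j)$) and the $\beta_n$ has determinant a \emph{unit} in $\BA$, not merely a nonzero element of $\BF$ — equivalently, that no genuine denominators (no non-monomial factors, and in particular nothing vanishing at $q=1$ beyond what cancels) survive. Everything else — passing from rank one to general rank via the tensor/monomial decomposition $U_\BA(\Gh)\cong\bigotimes_a U_\BA(\BZ y_a)$ of Lemma-style ``each variable separately'' arguments, deducing (i) and (iii) from (ii) — is routine once (ii) is in hand, which is no doubt why the paper says ``details are omitted'' and points to \cite{DL}.
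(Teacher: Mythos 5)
Your approach is essentially the one the paper intends, just with more of the details filled in: reduce to the rank-one case $Y=\BZ y$, where $U_\BF(\Gh)=\BF[z,z^{-1}]$ with $z=k_y$ and $U_\BA(\Gh)=\{f : f(q^a)\in\BA\ \forall a\in\BZ\}$; prove (ii) by a $q$-integer-valued-polynomial argument; deduce (iii) by clearing denominators with the unit $k_y$; and for (i) invoke the change-of-basis computation that is the content of the cited De Concini--Lyubashenko theorem. The paper's proof is just the sentence ``reduced to rank one, details omitted,'' so there is no genuine divergence to report.

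One small simplification you can make to your own argument, which removes the worry you flagged as the ``main obstacle'': the evaluation matrix $\bigl(\beta_n(q^j)\bigr)_{0\leqq j,n\leqq N}$ is not merely invertible over $\BA$ with determinant a monomial in $q$ --- it is lower \emph{unitriangular}. Indeed $\beta_n(q^j)=0$ for $0\leqq j<n$ (the factor $s=j+1$ vanishes), and the diagonal entry is
\[
\beta_n(q^n)=\prod_{s=1}^n\frac{q^{n-s+1}-1}{q^s-1}=\frac{\prod_{t=1}^n(q^t-1)}{\prod_{s=1}^n(q^s-1)}=1,
\]
so the determinant is exactly $1$ and the inverse has entries in $\BZ[q]\subset\BA$. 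This also disposes cleanly of your parenthetical claim that integrality at $q^a$ for $a<0$ is ``automatic'': you do not need to assume it, since the forward evaluations $j=0,\dots,N$ already force $c_n\in\BA$, and then integrality at negative $a$ follows because each $\beta_n(q^{-b})$ is $(-1)^n$ times a power of $q^{-1}$ times a Gaussian binomial, hence in $\BA$. Your reduction from general rank to rank one via the identification of $U_\BA(\Gh)$ with $\{f\in\BF[z_1^{\pm},\dots,z_m^{\pm}]: f(q^{a_1},\dots,q^{a_m})\in\BA\ \forall\,\underline a\in\BZ^m\}$ (using perfectness of the $X\times Y$ pairing) and the multivariable unitriangular argument is correct and is exactly what the paper has in mind.
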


We denote by $U_\BA(\Gg)$ the $\BA$-subalgebra of $U_\BF(\Gg)$ generated by $U_\BA(\Gh)$ and $e_i^{(n)}$, $f_i^{(n)}$ for $i\in I$, $n\in\BZ_{\geqq0}$.
It is naturally a Hopf algebra over $\BA$.
We define $\BA$-subalgebras 
$U_\BA(\Gb)$, $U_\BA(\Gn)$, $U_\BA(\Gn^+)$
of $U_\BA(\Gg)$ by
\begin{gather*}
U_\BA(\Gb)=
\langle U_\BA(\Gh), 
f_i^{(n)}
\mid
i\in I, n\in\BZ_{\geqq0}\rangle,
\\
U_\BA(\Gn)=
\langle
f_i^{(n)}
\mid
i\in I, n\in\BZ_{\geqq0}\rangle,
\quad
U_\BA(\Gn^+)=
\langle
e_i^{(n)}
\mid
i\in I, n\in\BZ_{\geqq0}\rangle.
\end{gather*}
We have the triangular decomposition
\[
U_\BA(\Gg)
\cong U_\BA(\Gn^+)\otimes U_\BA(\Gh)\otimes
U_\BA(\Gn),
\qquad
U_\BA(\Gb)
\cong U_\BF(\Gh)\otimes
U_\BA(\Gn),
\]
where the isomorphisms are induced by the multiplication.
For $\gamma\in Q^+$ set
\[
U_\BA(\Gn)_{-\gamma}
=U_\BA(\Gn)\cap
U_\BF(\Gn)_{-\gamma},
\qquad
U_\BA(\Gn^+)_{\gamma}
=U_\BA(\Gn^+)\cap
U_\BF(\Gn)_{\gamma}.
\]
Then we have
\[
U_\BA(\Gn)=
\bigoplus_{\gamma\in Q^+}
U_\BA(\Gn)_{-\gamma},
\qquad
U_\BA(\Gn^+)=
\bigoplus_{\gamma\in Q^+}
U_\BA(\Gn^+)_{\gamma}.
\]
It is known that $U_\BA(\Gn)_{-\gamma}$ and $U_\BA(\Gn^+)_\gamma$ are free $\BA$-modules of finite rank (see \cite{Lroots}).
\subsection{}\label{subsec:UC}
Let $R$ be a commutative ring equipped with 
 $\zeta\in R^\times$.
We set
\[
U_{R,\zeta}(\Gg)=R\otimes_\BA U_\BA(\Gg),
\quad
U_{R,\zeta}(\Gb)=R\otimes_\BA U_\BA(\Gb),
\quad
U_{R,\zeta}(\Gh)=R\otimes_\BA U_\BA(\Gh),
\]
\[
U_{R,\zeta}(\Gn)=R\otimes_\BA U_\BA(\Gn),
\quad
U_{R,\zeta}(\Gn^+)=R\otimes_\BA U_\BA(\Gn^+),
\]
where $\BA\to R$ is given by $q\mapsto\zeta$.
Then $U_{R,\zeta}(\Gg)$ is a Hopf algebra over $R$, and 
$U_{R,\zeta}(\Gb)$, 
$U_{R,\zeta}(\Gh)$, 
$U_{R,\zeta}(\Gn)$,
$U_{R,\zeta}(\Gn^+)$ are naturally identified with $R$-subalgebras of $U_{R,\zeta}(\Gg)$.
Moreover, $U_{R,\zeta}(\Gb)$, 
$U_{R,\zeta}(\Gh)$ are Hopf subalgebras.
We have the triangular decomposition
\[
U_{R,\zeta}(\Gg)
\cong U_{R,\zeta}(\Gn)\otimes U_{R,\zeta}(\Gh)\otimes
U_{R,\zeta}(\Gn^+),
\qquad
U_{R,\zeta}(\Gb)
\cong U_{R,\zeta}(\Gn)\otimes
U_{R,\zeta}(\Gh).
\]
For $\gamma\in Q^+$ we set
\[
U_{R,\zeta}(\Gn)_{-\gamma}
=R\otimes_\BA U_\BA(\Gn)_{-\gamma},
\qquad
U_{R,\zeta}(\Gn^+)_{\gamma}
=R\otimes_\BA U_\BA(\Gn^+)_{\gamma}.
\]
Then we have
\[
U_{R,\zeta}(\Gn)=
\bigoplus_{\gamma\in Q^+}
U_{R,\zeta}(\Gn)_{-\gamma},
\qquad
U_{R,\zeta}(\Gn^+)=
\bigoplus_{\gamma\in Q^+}
U_{R,\zeta}(\Gn^+)_{\gamma}.
\]

\begin{lemma}
\label{lem:hr}
Let $y_1,\dots, y_m$ be a basis of the free $\BZ$-module $Y$.
\begin{itemize}
\item[{\rm(i)}]
$U_{R,\zeta}(\Gh)$ is a free $R$-module with basis
\[
1\otimes
\prod_{a=1}^m
\begin{Bmatrix}
k_{y_a}
\\
n_a
\end{Bmatrix}_q
k_{y_a}^{-\lfloor(n_a+1)/2\rfloor}
\qquad
(n_1,\dots, n_m\in\BZ_{\geqq0}).
\]
\item[{\rm(ii)}]
The elements 
\[
1\otimes
\prod_{a=1}^m
\begin{Bmatrix}
k_{y_a}
\\
n_a
\end{Bmatrix}_q
\qquad
(n_1,\dots, n_m\in\BZ_{\geqq0})
\]
of $U_{R,\zeta}(\Gh)$ are linearly independent over $R$.
\item[{\rm(iii)}]
Denote by $U^+_{R,\zeta}(\Gh)$ the $R$-submodule of $U_{R,\zeta}(\Gh)$ generated by the elements in {\rm(ii)}.
Then $U^+_{R,\zeta}(\Gh)$ is a subring of $U_{R,\zeta}(\Gh)$.
Moreover,
$U_{R,\zeta}(\Gh)$ is a localization of $U^+_{R,\zeta}(\Gh)$ with respect to the multiplicative set 
$\{k_{n_1y_1+\dots+n_my_m}\mid
n_1,\dots, n_m\in\BZ_{\geqq0}\}$.
Hence
\begin{align*}
U_{R,\zeta}(\Gh)
=&
\bigcup_{n_1,\dots, n_m\in\BZ_{\geqq0}}
k_{n_1y_1+\dots+n_my_m}^{-1}
U^+_{R,\zeta}(\Gh).
\end{align*}
\end{itemize}
\end{lemma}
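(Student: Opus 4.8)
The plan is to deduce all three parts from Lemma~\ref{lem:ha} by base change, the one substantial point being~(ii). Throughout put $\theta=y_1+\cdots+y_m\in Y$, write $c_{\Bn}=\prod_{a=1}^{m}\begin{Bmatrix}k_{y_a}\\n_a\end{Bmatrix}_q$ for $\Bn=(n_1,\dots,n_m)\in\BZ_{\geqq0}^{m}$, and set $V=U_\BA(\Gh)\cap\BF[k_{y_1},\dots,k_{y_m}]$. By Lemma~\ref{lem:ha}(ii) the ring $V$ is a free $\BA$-module with basis $\{c_{\Bn}\}$, so $V_R:=R\otimes_\BA V$ is a free $R$-module with basis $\{1\otimes c_{\Bn}\}$. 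Part~(i) is then immediate: $U_{R,\zeta}(\Gh)=R\otimes_\BA U_\BA(\Gh)$, and $U_\BA(\Gh)$ is $\BA$-free on the basis of Lemma~\ref{lem:ha}(i), so $-\otimes_\BA R$ produces a free $R$-module on the asserted basis.

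For~(ii) I would first reduce to a non-zero-divisor statement. From $\begin{Bmatrix}k_y\\0\end{Bmatrix}_q=1$ and $\begin{Bmatrix}k_y\\n+1\end{Bmatrix}_q=\begin{Bmatrix}k_y\\n\end{Bmatrix}_q\cdot\frac{q^{-n}k_y-1}{q^{n+1}-1}$ one obtains
\[
k_y\begin{Bmatrix}k_y\\n\end{Bmatrix}_q=q^{n}\begin{Bmatrix}k_y\\n\end{Bmatrix}_q+q^{n}(q^{n+1}-1)\begin{Bmatrix}k_y\\n+1\end{Bmatrix}_q ,
\]
so (take $n=0$) $k_{y_a}\in V$, hence $k_{n_1y_1+\cdots+n_my_m}=\prod_a k_{y_a}^{n_a}\in V$ for all $n_a\geqq0$, and moreover $k_\theta^{N}k_{\Bn}^{-1}=k_{(N-n_1)y_1+\cdots+(N-n_m)y_m}\in V$ once $N\geqq\max_a n_a$. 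Thus the multiplicative set $\{k_{n_1y_1+\cdots+n_my_m}\}$ of Lemma~\ref{lem:ha}(iii) and the powers of the single element $k_\theta$ determine the same localization of $V$, and that lemma yields $U_\BA(\Gh)=V[k_\theta^{-1}]$. Since localization commutes with $-\otimes_\BA R$, this gives $U_{R,\zeta}(\Gh)=V_R[k_\theta^{-1}]$; hence~(ii) — the $R$-linear independence of the $1\otimes c_{\Bn}$ — is equivalent to injectivity of $V_R\to V_R[k_\theta^{-1}]$, that is, to $k_\theta$ being a non-zero-divisor on the free $R$-module $V_R$.

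To prove that, I would compute the action of $k_\theta=k_{y_1}\cdots k_{y_m}$ on the basis $\{1\otimes c_{\Bn}\}$. Multiplying the displayed identity over $a=1,\dots,m$ (all factors commute in $\BF[k_{y_1},\dots,k_{y_m}]$) and specialising $q\mapsto\zeta$ gives
\[
k_\theta(1\otimes c_{\Bn})=\zeta^{|\Bn|}(1\otimes c_{\Bn})+\sum_{\emptyset\ne\epsilon\subseteq\{1,\dots,m\}}\Bigl(\zeta^{|\Bn|}\prod_{a\in\epsilon}(\zeta^{n_a+1}-1)\Bigr)\,(1\otimes c_{\Bn+\epsilon}),
\]
where $|\Bn|=\sum_a n_a$ and $\Bn+\epsilon$ is $\Bn$ with its $a$-th entry raised by $1$ for each $a\in\epsilon$. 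So $k_\theta$ is triangular for the componentwise partial order on $\BZ_{\geqq0}^{m}$, with the \emph{units} $\zeta^{|\Bn|}$ on the diagonal. Given a nonzero $v=\sum_{\Bn\in S}r_{\Bn}(1\otimes c_{\Bn})$ with $S$ finite, choose $\Bn_0$ minimal for that order among the $\Bn\in S$ with $r_{\Bn}\ne0$; then in $k_\theta v$ the only contribution to $1\otimes c_{\Bn_0}$ is the diagonal term from $\Bn_0$, so its coefficient is $\zeta^{|\Bn_0|}r_{\Bn_0}\ne0$, forcing $k_\theta v\ne0$. This proves the non-zero-divisor claim, hence~(ii).

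Finally, for~(iii): by~(ii) the canonical ring homomorphism $V_R\to U_{R,\zeta}(\Gh)$ is injective, and its image is exactly the $R$-span $U^+_{R,\zeta}(\Gh)$ of the $1\otimes c_{\Bn}$; being the image of a ring homomorphism, it is a subring. The chain $U_{R,\zeta}(\Gh)=V_R[k_\theta^{-1}]=U^+_{R,\zeta}(\Gh)[k_\theta^{-1}]=\bigcup_{N\geqq0}k_\theta^{-N}U^+_{R,\zeta}(\Gh)$ displays $U_{R,\zeta}(\Gh)$ as a localization of $U^+_{R,\zeta}(\Gh)$, and rerunning over $R$ the comparison of multiplicative sets (now with $k_{n_1y_1+\cdots+n_my_m}\in U^+_{R,\zeta}(\Gh)$ and $k_\theta^{N}k_{\Bn}^{-1}\in U^+_{R,\zeta}(\Gh)$ for $N\gg0$) rewrites this union as $\bigcup_{n_1,\dots,n_m\geqq0}k_{n_1y_1+\cdots+n_my_m}^{-1}U^+_{R,\zeta}(\Gh)$, which is the asserted description. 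I expect the main obstacle to be the non-zero-divisor statement in~(ii): deducing it from flatness of $U_\BA(\Gh)/V$ over $\BA$ runs into the quotients $V/k_\theta^{N}V$ and turns circular, whereas the triangular argument works over an \emph{arbitrary} commutative ring $R$ and unit $\zeta$ — in particular a root of unity — precisely because after specialisation the $q$-binomial recursion has invertible diagonal entries $\zeta^{n}$.
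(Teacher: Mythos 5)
Your argument is correct and follows essentially the same route as the paper's (terse) proof: both reduce, via Lemma \ref{lem:ha}, to showing that the canonical map $R\otimes_\BA\bigl(U_\BA(\Gh)\cap\BF[k_{y_1},\dots,k_{y_m}]\bigr)\to U_{R,\zeta}(\Gh)$ is injective, i.e.\ that multiplication by $k_\theta=k_{y_1}\cdots k_{y_m}$ (equivalently, by each $k_{y_a}$) is injective on this free $R$-module. Where the paper reduces to rank one and says ``details omitted,'' you supply those details directly in rank $m$ via the recursion $k_y\begin{Bmatrix}k_y\\n\end{Bmatrix}_q=q^n\begin{Bmatrix}k_y\\n\end{Bmatrix}_q+q^n(q^{n+1}-1)\begin{Bmatrix}k_y\\n+1\end{Bmatrix}_q$ and the resulting triangularity with unit diagonal entries $\zeta^{|\Bn|}$, which is precisely the intended computation.
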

\begin{proof}
Set 
\[
L=R\otimes_\BA(U_\BA(\Gh)\cap\BF[k_{y_1},\dots, k_{y_m}]),
\quad
S=\{k_{n_1y_1+\dots+n_my_m}\mid
n_1,\dots, n_m\in\BZ_{\geqq0}\}\subset L.
\]
In view of Lemma \ref{lem:ha} it is sufficient to verify that the canonical homomorphism $L\to S^{-1}L$ is injective.
Hence we have only to show that the map
$L\ni z\mapsto k_{y_a}z\in L$ is injective for any $a$.
This is easily reduced to the case where $Y$ is of rank one.
Details are omitted.
\end{proof}

Let $\lambda\in X$.
By abuse of notation we denote by
\[
\chi_\lambda:U_{R,\zeta}(\Gh)\to R
\]
the $R$-algebra homomorphism induced by 
$\chi_\lambda:U_\BF(\Gh)\to\BF$.

The proof of the following fact is reduced to the rank one case.
Details are omitted.
\begin{lemma}
\label{lem:cr}
\begin{itemize}
\item[{\rm(i)}]
The subset $\{\chi_\lambda\mid \lambda\in X\}$ 
of $\Hom_R(U_{R,\zeta}(\Gh),R)$ 
is linearly independent over $R$.
\item[{\rm(ii)}]
Let $h\in U_{R,\zeta}(\Gh)$.
If $\chi_\lambda(h)=0$ for any $\lambda\in X$, then we have $h=0$.
\end{itemize}
\end{lemma}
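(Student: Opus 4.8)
The plan is to reduce both assertions, in two steps, to the case where $Y$ has rank one.

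First I would pass from $U_{R,\zeta}(\Gh)$ to the subring $U^+_{R,\zeta}(\Gh)$ of Lemma \ref{lem:hr}(iii). Since $\chi_\lambda(k_y)=\zeta^{\langle\lambda,y\rangle}\in R^\times$ and $U_{R,\zeta}(\Gh)=\bigcup_{n_1,\dots,n_m\geqq0}k_{n_1y_1+\dots+n_my_m}^{-1}U^+_{R,\zeta}(\Gh)$, every $\chi_\lambda$ is determined by its restriction to $U^+_{R,\zeta}(\Gh)$, and the identity $\chi_\lambda(k_\mu^{-1}h')=\zeta^{-\langle\lambda,\mu\rangle}\chi_\lambda(h')$ shows that (i) and (ii) for $U_{R,\zeta}(\Gh)$ follow from the corresponding statements for $U^+_{R,\zeta}(\Gh)$. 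Next, by Lemmas \ref{lem:ha}(ii) and \ref{lem:hr}(iii) the multiplication induces an $R$-algebra isomorphism $U^+_{R,\zeta}(\Gh)\cong\bigotimes_{a=1}^m U^+_{R,\zeta,a}$, where $U^+_{R,\zeta,a}$ is the analogous ring attached to the rank-one lattice $\BZ y_a$; under it $\chi_\lambda=\bigotimes_a\chi^{(a)}_{\langle\lambda,y_a\rangle}$, where $\chi^{(a)}_j\colon U^+_{R,\zeta,a}\to R$ sends $k_{y_a}\mapsto\zeta^j$, and, the pairing $X\times Y\to\BZ$ being perfect, $\lambda\mapsto(\langle\lambda,y_a\rangle)_a$ is a bijection $X\xrightarrow{\ \sim\ }\BZ^m$. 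Then (i) will follow from the elementary fact that a tensor product over $R$ of $R$-linearly independent families of functionals on free $R$-modules is again $R$-linearly independent (prove it for two factors by fixing an $R$-basis of one factor, then iterate over $a$), granting (i) for each $U^+_{R,\zeta,a}$; and (ii) will follow by induction on $m$: applying $\id\otimes\bigotimes_{a\geqq2}\chi^{(a)}_{n_a}$ to an $h$ killed by all $\chi_\lambda$ and using (ii) for $U^+_{R,\zeta,1}$ shows this element lies in $\Ker\chi$ for every character $\chi$ of $\bigotimes_{a\geqq2}U^+_{R,\zeta,a}$, whence freeness of the first tensor factor together with the inductive hypothesis finish it.

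It remains to treat $Y=\BZ y$. Write $k=k_y$ and, for $j\in\BZ$, let $\psi_j\colon U^+_{R,\zeta}(\Gh)\to R$ be the character with $\psi_j(k)=\zeta^j$; recall that $U^+_{R,\zeta}(\Gh)$ is $R$-free on $\beta_n=1\otimes\begin{Bmatrix}k\\n\end{Bmatrix}_q$ $(n\geqq0)$, with $\psi_j(\beta_n)$ equal to the image in $R$ of $\begin{Bmatrix}q^j\\n\end{Bmatrix}_q\in\BA$. For (ii): since $\begin{Bmatrix}q^j\\n\end{Bmatrix}_q$ vanishes for $0\leqq j<n$ and equals $1$ for $j=n$, the matrix $\bigl(\begin{Bmatrix}q^j\\n\end{Bmatrix}_q\bigr)_{0\leqq j,n\leqq N}$ is lower triangular with $1$'s on the diagonal, hence lies in $GL_{N+1}(\BA)$; its image lies in $GL_{N+1}(R)$, so $\psi_0(h)=\dots=\psi_N(h)=0$ forces the first $N+1$ coordinates of $h$ to vanish, and letting $N$ grow gives (ii). For (i): put $P=U_\BA(\Gh)\cap\BF[k]$, an $\BA$-subalgebra that by Lemma \ref{lem:ha}(ii) is $\BA$-free on $\begin{Bmatrix}k\\n\end{Bmatrix}_q$, so that $U^+_{R,\zeta}(\Gh)\cong R\otimes_\BA P$ (via $q\mapsto\zeta$) and $\psi_j$ is the base change of $\mathrm{ev}_j:=\chi_\lambda|_P\colon P\to\BA$ for any $\lambda$ with $\langle\lambda,y\rangle=j$. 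I claim it suffices to show that for every finite $J=\{j_1<\dots<j_L\}\subset\BZ$ the map $(\mathrm{ev}_j)_{j\in J}\colon P\to\BA^J$ is surjective: since $\BA^J$ is free this map then splits, producing $p_a\in P$ with $\mathrm{ev}_{j_b}(p_a)=\delta_{ab}$, and evaluating a relation $\sum_{j\in J}c_j\psi_j=0$ on the $\beta_n$, multiplying by the images in $R$ of the $\BA$-coefficients of $p_a$, and summing over $n$ yields $c_{j_a}=0$. For $J\subset\BZ_{\geqq0}$ the surjectivity follows by induction on $L$, using $\begin{Bmatrix}k\\j_L\end{Bmatrix}_q$, which is sent by $\mathrm{ev}_j$ to $0$ for $0\leqq j<j_L$ and to $1$ for $j=j_L$. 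The general case reduces to this via the $\BF$-algebra automorphism $\theta_c$ of $\BF[k]$ with $\theta_c(k)=q^ck$, where $c=-\min J$: one has $\mathrm{ev}_j\circ\theta_c=\mathrm{ev}_{j+c}$ on $P$, and $\theta_c(P)\subseteq P$ because $\begin{Bmatrix}q^ck\\n\end{Bmatrix}_q\in U_\BA(\Gh)$, its $\chi_\lambda$-value being $\begin{Bmatrix}q^{c+\langle\lambda,y\rangle}\\n\end{Bmatrix}_q\in\BA$.

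The main obstacle is the rank-one assertion (i). Unlike (ii) it is not the invertibility of a single triangular matrix, and since $J$ may contain negative integers, no square submatrix of $\bigl(\begin{Bmatrix}q^j\\n\end{Bmatrix}_q\bigr)$ is visibly unimodular; the device that resolves this is the $q$-shift $\theta_c$, which preserves $P$ and reduces surjectivity to the case $J\subset\BZ_{\geqq0}$, where triangularity applies. The only nontrivial external input is the standard fact that $\begin{Bmatrix}q^m\\n\end{Bmatrix}_q\in\BA$ for every $m\in\BZ$.
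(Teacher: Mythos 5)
Your proposal is correct, and it carries out exactly what the paper asks for but leaves implicit: the paper only says ``the proof is reduced to the rank one case; details omitted,'' and you supply both the reduction and a full treatment of the rank-one case. The reduction through $U^+_{R,\zeta}(\Gh)$, the identification of $U^+_{R,\zeta}(\Gh)$ with an $m$-fold tensor product of rank-one pieces via Lemma~\ref{lem:ha}(ii)/\ref{lem:hr}(iii) and perfectness of the pairing $X\times Y\to\BZ$, the elementary tensor-product lemma for linear independence of functionals on free modules, and the induction on $m$ for (ii) are all sound. The rank-one case of (ii) via triangularity of $\bigl(\begin{Bmatrix}q^j\\n\end{Bmatrix}_q\bigr)_{0\leqq j,n\leqq N}$ is exactly the expected argument. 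The rank-one case of (i) is, as you note, where the content lies: over a general commutative $R$ linear independence does not follow from a pointwise non-vanishing argument, and your device of proving surjectivity of $(\mathrm{ev}_j)_{j\in J}\colon P\to\BA^J$ over the base ring $\BA$, splitting it there, and base-changing the dual family is a clean way to handle this, with the $q$-shift automorphism $\theta_c$ (valid because $\begin{Bmatrix}q^ck\\n\end{Bmatrix}_q\in U_\BA(\Gh)$, as the paper records) reducing arbitrary $J$ to $J\subset\BZ_{\geqq0}$ where triangularity applies. One small presentational remark: the sentence describing the induction for (ii) in higher rank is slightly garbled (``this element lies in $\Ker\chi$'' has an unclear referent); the intended argument — apply $\id\otimes\bigotimes_{a\geqq2}\chi^{(a)}_{j_a}$, conclude by rank-one (ii) that the result in $U^+_{R,\zeta,1}$ vanishes, then use freeness of $U^+_{R,\zeta,1}$ to pass to the components in $\bigotimes_{a\geqq2}U^+_{R,\zeta,a}$ and invoke the inductive hypothesis — is correct, but you should restate it more carefully when writing the proof out.
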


We set
\[
\zeta_\alpha=\zeta^{d_\alpha}\;\;(\alpha\in\Delta),
\qquad
\zeta_i=\zeta^{d_i}\;\;(i\in I).
\]
\subsection{}
In this subsection we assume that 
$\zeta_\alpha=\pm1$ for any $\alpha\in\Delta$.
We compare $U_{R,\zeta}(\Gg)$ with $\overline{U}_R(\Gg)$ in the following.

Note that $\zeta^s=1$ for some $s\in\BZ_{>0}$ by our assumption.
For $y\in Y$ and $n\in\BZ_{\geqq0}$ 
we set
\[
h(y,n)
=
1\otimes
\begin{Bmatrix}
k_{sy}
\\
n
\end{Bmatrix}_{q^s}
\in U_{R,\zeta}(\Gh).
\]
By Lemma \ref{lem:cr} (ii) it is characterized  as the element of $U_{R,\zeta}(\Gh)$ satisfying
\[
\chi_\lambda(h(y,n))=
\begin{pmatrix}
\langle\lambda,y\rangle
\\
n
\end{pmatrix}1_R
\]
for any $\lambda\in X$.
In particular, $h(y,n)$ does not depend on the choice of $s$.
Denote by $U'_{R,\zeta}(\Gh)$ the subalgebra of $U'_{R,\zeta}(\Gh)$ generated by the elements $h(y,n)$ for $y\in Y$, $n\in\BZ_{\geqq0}$.
By Lemma \ref{lem:cr} (ii) we see easily the following
\begin{lemma}
If $y_1,\dots, y_m$ is a basis of the free $\BZ$-module $Y$, then 
\[
\prod_{a=1}^m h(y_a,n_a)
\qquad
(n_1,\dots, n_m\in\BZ_{\geqq0})
\]
form a basis of the $R$-module $U'_{R,\zeta}(\Gh)$.
\end{lemma}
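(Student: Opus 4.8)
The plan is to show that the proposed elements span $U'_{R,\zeta}(\Gh)$ and are linearly independent over $R$, the latter being the only part requiring real work. For the spanning assertion, I would argue that the algebra generated by the $h(y,n)$ is already spanned by products indexed by a fixed basis. Fix a basis $y_1,\dots,y_m$ of $Y$. The key observation is that for a single $y\in Y$ the elements $h(y,n)$, $n\geqq0$, form an $R$-basis of the polynomial-type subalgebra they generate; more precisely, using the characterization $\chi_\lambda(h(y,n))=\binom{\langle\lambda,y\rangle}{n}1_R$, a product $h(y,n)h(y,n')$ is a linear combination of $h(y,n'')$ with $n''\leqq n+n'$ because the binomial coefficient polynomials $\binom{x}{n}\binom{x}{n'}$ expand in the basis $\{\binom{x}{n''}\}$ with integer coefficients. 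Applying this in each coordinate and using that the $h(y_a,\cdot)$ commute, any product of generators $h(z_1,k_1)\cdots h(z_r,k_r)$ rewrites (via Lemma \ref{lem:cr}(ii), which lets one verify identities by testing against all $\chi_\lambda$) as an $R$-linear combination of the monomials $\prod_a h(y_a,n_a)$. Hence these monomials span.

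The harder half is $R$-linear independence of $\{\prod_{a=1}^m h(y_a,n_a)\mid n_1,\dots,n_m\geqq0\}$. Here the strategy is to use the evaluation characters: suppose $\sum_{\Bn} c_{\Bn}\prod_a h(y_a,n_a)=0$ with $c_{\Bn}\in R$, almost all zero. Applying $\chi_\lambda$ for $\lambda\in X$ gives $\sum_{\Bn} c_{\Bn}\prod_a\binom{\langle\lambda,y_a\rangle}{n_a}1_R=0$ in $R$ for every $\lambda\in X$. Writing $\ell_a=\langle\lambda,y_a\rangle$, and noting that as $\lambda$ ranges over $X$ the tuple $(\ell_1,\dots,\ell_m)$ ranges over all of $\BZ^m$ (since $y_1,\dots,y_m$ is a $\BZ$-basis of $Y$ and $X$ is the full dual lattice in this setting — one should check this, or at least that $X$ surjects onto $\Hom(\BZ y_a,\BZ)=\BZ$ in each coordinate), we get $\sum_{\Bn} c_{\Bn}\prod_a\binom{\ell_a}{n_a}1_R=0$ for all $(\ell_1,\dots,\ell_m)\in\BZ^m$. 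The multivariate polynomials $\prod_a\binom{x_a}{n_a}$ are, over $\BZ$, a basis of the ring of integer-valued polynomials, and the "discrete derivative" / finite-difference argument lets one solve for each $c_{\Bn}$ as a $\BZ$-linear combination of values of $\sum c_{\Bn'}\prod_a\binom{\ell_a}{n_a}$ at finitely many integer points $(\ell_1,\dots,\ell_m)$. Concretely, apply the iterated forward-difference operator $\prod_a \Delta_{x_a}^{n_a}$ and evaluate at $0$: one obtains $c_{\Bn}1_R=0$ in $R$, hence $c_{\Bn}=0$. This works because the difference operators have integer coefficients, so the identities survive base change $\BZ\to R$.

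The main obstacle — and the point deserving care — is precisely this last step: over a general commutative ring $R$ one cannot invert integers, so one must express each coefficient $c_{\Bn}$ as an \emph{integer} linear combination of the tested values, with no denominators. This is exactly what the finite-difference approach delivers, since $\Delta^n\binom{x}{k}\big|_{x=0}=\delta_{nk}$ with all intermediate coefficients in $\BZ$; thus no division by factorials ever occurs, in contrast to a naive Lagrange-interpolation argument. I would therefore phrase the core lemma as: the matrix $\big(\prod_a\binom{\ell_a}{n_a}\big)_{\Bn,\,\boldsymbol{\ell}}$, suitably truncated to a finite range, is invertible over $\BZ$ (indeed unitriangular with respect to a natural ordering of the multi-indices, after a suitable choice of evaluation points $\boldsymbol{\ell}=\Bn$), and then base-change to $R$.

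As the excerpt suggests, I would in fact reduce everything to the rank-one case $m=1$: independence of $\{h(y,n)\mid n\geqq0\}$ over $R$ for a single generator $y$, proved by the one-variable finite-difference identity above, and then the general case follows from the triangular-decomposition-style fact that the $h(y_a,\cdot)$ sit in commuting polynomial subalgebras whose product is direct — itself checked by the same evaluation-character device. With that reduction the paper's stated pattern "details are omitted, reduce to rank one" is honored, and the lemma is complete once the rank-one independence and the spanning statement are in hand.
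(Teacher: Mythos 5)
Your proof is correct and is exactly the fleshing-out of what the paper leaves implicit: the paper gives no argument beyond ``by Lemma~\ref{lem:cr}(ii) we see easily,'' and your strategy — use the injective evaluation $h\mapsto(\chi_\lambda(h))_\lambda$ together with the fact that $\prod_a\binom{x_a}{n_a}$ is a $\BZ$-basis of multivariate integer-valued polynomials, inverted by a unitriangular finite-difference matrix with integer entries so that no denominators appear — is precisely the content that hint points to. The one place worth tightening is your parenthetical ``or at least that $X$ surjects onto $\BZ$ in each coordinate'': coordinatewise surjectivity alone would not suffice for the joint evaluation argument, but since $(X,\Delta,Y,\Delta^\vee)$ is a root datum the pairing $X\times Y\to\BZ$ is perfect, so $\lambda\mapsto(\langle\lambda,y_a\rangle)_a$ is in fact an isomorphism $X\cong\BZ^m$, and the argument goes through as you intend.
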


For $i\in I$, $n\in\BZ_{\geqq0}$, $s\in\BZ$ set
\begin{align*}
\begin{bmatrix}
k_{i};s
\\
n
\end{bmatrix}_{q_i}
=&
\prod_{a=1}^n
\frac
{q_i^{s-a+1}k_i-q_i^{-s+a-1}k_i^{-1}}
{q_i^a-q_i^{-a}}
\in U_{\BA}(\Gh),
\\
t(i,n,s)
=&1\otimes q_i^{-n(n-s)}k_i^n\begin{bmatrix}
k_{i};s
\\
n
\end{bmatrix}_{q_i}
\in U_{R,\zeta}(\Gh).
\end{align*}
Then we have
\[
\chi_\lambda(t(i,n,s))=
\begin{pmatrix}
\langle\lambda,\alpha_i^\vee\rangle+s
\\
n
\end{pmatrix}
1_R
\]
for any $\lambda\in X$.
Note 
\[
\begin{pmatrix}
x+s
\\
n
\end{pmatrix}
\in\sum_{m=0}^n
\BZ
\begin{pmatrix}
x
\\
m
\end{pmatrix}
\]
in $\BQ[x]$.
Hence by Lemma \ref{lem:cr} (ii) we have
\[
t(i,n, s)\in
\sum_{m=0}^n
R h(\alpha_i^\vee,m)\in U'_{R,\zeta}(\Gh)
\qquad
(i\in I, n\in\BZ_{\geqq0}).
\]

Take a subset $J$ of $I$ satisfying
\begin{equation}
\label{eq:J}
i, j\in I, \;
a_{ij}<0
\Longrightarrow
\;
|J\cap\{i, j\}|=1.
\end{equation}
For $i\in I$ and $n\in\BZ_{\geqq0}$ we define 
$e(i,n), f(i,n)
\in
U_{R,\zeta}(\Gg)$
by
\[
e(i,n)
=
\begin{cases}
\zeta_i^{n(n-1)/2}e_i^{(n)}&(i\in J)
\\
\zeta_i^{n(n+1)/2}e_i^{(n)}k_i^n&(i\notin J),
\end{cases}
\qquad
f(i,n)=
\begin{cases}
\zeta_i^{n(n+1)/2}f_i^{(n)}k_i^n&(i\in J)
\\
\zeta_i^{n(n-1)/2}f_i^{(n)}&(i\notin J).
\end{cases}
\]
We denote by $U'_{R,\zeta}(\Gn)$ (resp.\ $U'_{R,\zeta}(\Gn^+)$)
the $R$-subalgebra of $U_{R,\zeta}(\Gg)$ generated by
$\{
f(i,n)\mid i\in J, n\in\BZ_{\geqq0}
\}$
(resp.\
$\{
e(i,n)\mid i\in J, n\in\BZ_{\geqq0}
\}$).
For $\gamma=\sum_{i\in I}m_i\alpha_i\in Q$ we set
$\gamma^\dagger=\sum_{i\in J}m_i\alpha_i\in Q$.
Then we have
\begin{equation}
\label{eq:UU-prime}
U'_{R,\zeta}(\Gn)=
\bigoplus_{\gamma\in Q^+}U_{R,\zeta}(\Gn)_{-\gamma}k_{\gamma^\dagger},
\qquad
U'_{R,\zeta}(\Gn^+)=
\bigoplus_{\gamma\in Q^+}U_{R,\zeta}(\Gn^+)_{\gamma}k_{\gamma-\gamma^\dagger}.
\end{equation}

We define $U'_{R,\zeta}(\Gg)$ to be the $R$-subalgebra of $U_{R,\zeta}(\Gg)$ generated by 
$U'_{R,\zeta}(\Gh)$, $U'_{R,\zeta}(\Gn)$, $U'_{R,\zeta}(\Gn^+)$.

By well-known relations in $U_\BF(\Gg)$ we have
\begin{align}
\label{eq:cc1}
&he(i,n)=
\chi_{n\alpha_i}(h)
e(i,n)h,
\qquad
hf(i,n)=
\chi_{-n\alpha_i}(h)
f(i,n)h,
\\
\label{eq:cc2}
&
e(i,n)f(j,m)=f(j,m)e(i,n)\qquad(i\ne j),
\\
\label{eq:cc3}
&e(i,n)f(i,m)
=
\sum_{0\leqq a\leqq n,m}
f(i,m-a)t(i,a,2a-m-n)e(i,n-a)
\end{align}
for $h\in U'_{R,\zeta}(\Gh)$, $n, m\in\BZ_{\geqq0}$, $i, j\in I$.
By \eqref{eq:cc1}, \eqref{eq:cc2}, \eqref{eq:cc3} we see that 
the multiplication of $U'_{R,\zeta}(\Gg)$ induces the isomorphism
\begin{equation}
\label{eq:triUprime}
U'_{R,\zeta}(\Gg)
\cong
U'_{R,\zeta}(\Gn)
\otimes
U'_{R,\zeta}(\Gh)
\otimes
U'_{R,\zeta}(\Gn^+)
\end{equation}
of $R$-modules.
\begin{lemma}
\label{lem:u-prime}
Recall that $\zeta_\alpha=\pm1$ for any $\alpha\in \Delta$.
We have isomorphisms
\begin{align*}
\overline{U}_R(\Gn)&\cong U'_{R,\zeta}(\Gn)
\qquad(\overline{f}_i^{(n)}\leftrightarrow f(i,n)),
\\
\overline{U}_R(\Gn^+)&\cong U'_{R,\zeta}(\Gn^+)
\qquad(\overline{e}_i^{(n)}\leftrightarrow e(i,n))
\end{align*}
of $R$-algebras.
\end{lemma}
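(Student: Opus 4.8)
The plan is to exhibit the claimed maps as algebra isomorphisms by showing they are well-defined surjective algebra homomorphisms and then matching bases. I will treat the $\Gn$ case; the $\Gn^+$ case is entirely parallel. The first step is to verify that the assignment $\overline{f}_i^{(n)}\mapsto f(i,n)$ respects the defining relations of $\overline{U}_R(\Gn)$, i.e.\ the divided-power relations $f(i,n)f(i,m)=\binom{n+m}{n}f(i,n+m)$ together with the $q$-Serre relations specialized at $\zeta$. For the first family one uses the known identity $f_i^{(n)}f_i^{(m)}=\left[\begin{smallmatrix}n+m\\ n\end{smallmatrix}\right]_{q_i}f_i^{(n+m)}$ in $U_\BF(\Gg)$, together with the fact that $\zeta_i=\pm1$ forces $[r]_{\zeta_i}!=r!$ (more precisely $[r]_{\zeta_i}=r$ when $\zeta_i=1$, and $[r]_{\zeta_i}=(\pm1)^{r-1}r$ when $\zeta_i=-1$, and one checks the sign bookkeeping carried by the prefactor $\zeta_i^{n(n\pm1)/2}$ — together with the extra $k_i^n$ when $i\notin J$, which contributes only via relations already recorded in \eqref{eq:cc1} — works out so that $\left[\begin{smallmatrix}n+m\\ n\end{smallmatrix}\right]_{\zeta_i}$ becomes $\binom{n+m}{n}$). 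For the Serre relations one rewrites Lusztig's integral $q$-Serre relation, specializes $q\mapsto\zeta$, and uses $a_{ij}<0\Rightarrow|J\cap\{i,j\}|=1$ from \eqref{eq:J} to see that the two indices $i,j$ appearing in a Serre relation are never both in $J$; hence at most one of $f(i,\cdot),f(j,\cdot)$ carries a $k$-factor, and one commutes that single $k$-factor past the $f$'s using \eqref{eq:cc1}, collecting an overall scalar that is absorbed into the relation. Thus we get a well-defined $R$-algebra homomorphism $\Phi:\overline{U}_R(\Gn)\to U'_{R,\zeta}(\Gn)$.

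Surjectivity of $\Phi$ is immediate since $U'_{R,\zeta}(\Gn)$ is by definition generated by the $f(i,n)$ with $i\in J$ — but here I must be slightly careful: the generators $f(i,n)$ of $U'_{R,\zeta}(\Gn)$ are indexed by $i\in J$ only, whereas $\overline{U}_R(\Gn)$ is generated by $\overline{f}_i^{(n)}$ for all $i\in I$. So strictly the homomorphism should be set up from $\overline{U}_R(\Gn)$ with all of $I$, and one must observe that the $f(i,n)$ for $i\notin J$ — which involve $k_i^n$ and hence a priori lie outside $U'_{R,\zeta}(\Gn)$ — are nonetheless expressible inside $U'_{R,\zeta}(\Gn)$. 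This is exactly the content of the identity \eqref{eq:UU-prime}: it says $U'_{R,\zeta}(\Gn)=\bigoplus_{\gamma\in Q^+}U_{R,\zeta}(\Gn)_{-\gamma}k_{\gamma^\dagger}$, so the image under multiplication of all products of $f(i,n)$'s (any $i\in I$) stays in this space. So the honest statement is: $\Phi$ is defined on $\overline{U}_R(\Gn)$, lands in $U'_{R,\zeta}(\Gn)$, and is surjective because the weight-$(-\gamma)$ component of $U'_{R,\zeta}(\Gn)$ is $U_{R,\zeta}(\Gn)_{-\gamma}k_{\gamma^\dagger}$ and the latter is spanned by images of monomials in the $\overline{f}_i^{(n)}$ (using that $U_{R,\zeta}(\Gn)_{-\gamma}$ is spanned by monomials in the $f_i^{(n)}$, and each $f_i^{(n)}$ differs from $f(i,n)$ or $f(i,n)k_i^{-n}$ by a unit scalar times a power of $k_i$, with the needed $k$-factors collected correctly via \eqref{eq:cc1}).

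For injectivity I would compare $R$-bases compatible with the $Q^+$-grading. Both $\overline{U}_R(\Gn)$ and $U'_{R,\zeta}(\Gn)$ are free $R$-modules: the former because $\overline{U}_\BZ(\Gn)$ is a free $\BZ$-module (Kostant $\BZ$-form) with a $\gamma$-homogeneous basis, and the latter by \eqref{eq:UU-prime} together with the freeness of $U_{R,\zeta}(\Gn)_{-\gamma}=R\otimes_\BA U_\BA(\Gn)_{-\gamma}$ over $R$ (recorded after \eqref{eq:cc3}, and coming from the cited freeness of $U_\BA(\Gn)_{-\gamma}$). Since $\Phi$ preserves the $Q^+$-grading and is surjective, it suffices to check that in each graded piece the source and target have the same $R$-rank; this reduces to the rank equality $\operatorname{rank}_\BZ\overline{U}_\BZ(\Gn)_{-\gamma}=\operatorname{rank}_\BA U_\BA(\Gn)_{-\gamma}$, which is classical (both equal the Kostant partition number / dimension of the corresponding weight space). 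A surjection of finite free $R$-modules of equal rank between the graded pieces need not in general be injective, so to conclude cleanly I would instead produce an explicit basis of $\overline{U}_R(\Gn)_{-\gamma}$ (a PBW-type monomial basis in the $\overline{f}_i^{(n)}$) and show its image is an $R$-basis of $U'_{R,\zeta}(\Gn)_{-\gamma}$, using that the corresponding PBW monomials in the $f_i^{(n)}$ form an $\BA$-basis of $U_\BA(\Gn)_{-\gamma}$ (Lusztig) and that each such monomial maps, up to a unit of $R$ and a controlled $k$-shift absorbed by \eqref{eq:UU-prime}, to the analogous PBW monomial in the $f(i,n)$. The main obstacle, and where I would spend the most care, is precisely this bookkeeping of the sign prefactors $\zeta_i^{n(n\pm1)/2}$ and the $k_i$-shifts: one must check that passing from the $q$-divided-power relations to the ordinary ones introduces no stray signs, and that the $k$-factors attached to indices $i\notin J$ collect consistently under \eqref{eq:cc1} so that the image of a product of $\overline{f}$'s is a \emph{unit} times the expected $f(i,n)$-monomial rather than merely a nonzero multiple. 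The $\Gn^+$ case is then identical with $e$ in place of $f$ and the roles of $J$ and $I\setminus J$ governed by the second half of \eqref{eq:UU-prime}.
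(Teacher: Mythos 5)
The central difficulty in your argument is the well-definedness of $\Phi$, and this is where there is a genuine gap. You assert that the defining relations of $\overline{U}_R(\Gn)$ are ``the divided-power relations together with the ($q$-)Serre relations specialized at $\zeta$,'' and conclude that checking these gives a well-defined algebra homomorphism $\overline{U}_R(\Gn)\to U'_{R,\zeta}(\Gn)$. But over an arbitrary commutative ring $R$ the Kostant form $\overline{U}_\BZ(\Gn)$ (hence $\overline{U}_R(\Gn)=R\otimes_\BZ\overline{U}_\BZ(\Gn)$) is \emph{not} known to admit this presentation; $\overline{U}_\BZ(\Gn)$ is \emph{defined} as a subalgebra (generated by $\overline{f}_i^{(n)}$) rather than by generators and relations, and presenting it requires additional, higher-order relations. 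So the step ``Thus we get a well-defined $R$-algebra homomorphism $\Phi$'' does not follow from the relation checks you perform, and without $\Phi$ the subsequent surjectivity and basis-matching arguments have nothing to apply to. Note that your own later remark (``a surjection of finite free $R$-modules of equal rank need not be injective'') addresses injectivity, not this well-definedness issue, so your proposed fix does not close the gap.

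The paper avoids the issue entirely by a reduction in the coefficient ring: after reducing to $\Gg$ simple and rescaling the bilinear form so that $\zeta=\pm1$, one observes $U'_{R,\zeta}(\Gn)=R\otimes_\BZ U'_{\BZ,\xi}(\Gn)$ with $\xi=\pm1\in\BZ$, so it suffices to treat $R=\BZ$; then both $\overline{U}_\BZ(\Gn)$ and $U'_{\BZ,\xi}(\Gn)$ embed in their $\BQ$-forms, and over $\BQ$ both sides are ordinary universal enveloping algebras of a nilpotent Lie algebra, for which the classical Serre presentation \emph{is} valid. One constructs the isomorphism there by the relation check (which is essentially the computation you carry out, but now legitimate), and since the $\BQ$-isomorphism sends the generating set $\{\overline{f}_i^{(n)}\}$ bijectively to $\{f(i,n)\}$, it restricts to an isomorphism of the $\BZ$-subalgebras they generate; tensoring with $R$ (using freeness of both $\BZ$-forms, which comes from Lusztig's result on $U_\BA(\Gn)_{-\gamma}$ together with \eqref{eq:UU-prime}) finishes the proof. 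In short, the relation-checking idea is right, but it must be performed over $\BQ$ and then descended, not over $R$ directly.

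A smaller remark: the confusion you flag about the generators of $U'_{R,\zeta}(\Gn)$ being indexed by $i\in J$ is caused by a typo in the paper's definition (it should read $i\in I$); this is forced by \eqref{eq:UU-prime}, which sums over all $\gamma\in Q^+$, and by the form of the lemma itself. You were right to notice the inconsistency, but the correct resolution is to read $I$ for $J$, not to engage in the workaround you describe — in particular $f(i,n)$ for $i\notin J$ \emph{does} lie in $U'_{R,\zeta}(\Gn)$ simply because it is one of the generators.
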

\begin{proof}
We may assume $\Gg$ is simple.
By considering the situation where the bilinear form $(\;,\;)$ in \eqref{eq:bilin} is replaced by 
$d_\alpha^{-1}(\;,\;)$ for short roots $\alpha$, 
we may further assume that $q=q_\alpha$ for short roots $\alpha$.
In this case $\zeta=\pm1$, and we have 
\[
U'_{R,\zeta}(\Gn)=R\otimes_\BZ U'_{\BZ,\xi}(\Gn),
\qquad
U'_{R,\zeta}(\Gn^+)=R\otimes_\BZ U'_{\BZ,\xi}(\Gn^+).
\]
Here $\xi=1\in\BZ$ if $\zeta=1\in R$, and $\xi=-1\in\BZ$ if $\zeta=-1\in R$.
Hence we have only to show
\[
\overline{U}_\BZ(\Gn)\cong U'_{\BZ,\xi}(\Gn),
\qquad
\overline{U}_\BZ(\Gn^+)\cong U'_{\BZ,\xi}(\Gn^+).
\]
By using the embeddings
\[
\overline{U}_\BZ(\Gn)\subset \overline{U}_\BQ(\Gn),
\;\;
\overline{U}_\BZ(\Gn^+)\subset \overline{U}_\BQ(\Gn^+),
\;\;
U'_{\BZ,\xi}(\Gn)
\subset
U'_{\BQ,\xi}(\Gn),
\;\;
U'_{\BZ,\xi}(\Gn^+)
\subset
U'_{\BQ,\xi}(\Gn^+)
\]
the proof is reduced to showing 
\[
\overline{U}_\BQ(\Gn)\cong U'_{\BQ,\xi}(\Gn),
\qquad
\overline{U}_\BQ(\Gn^+)\cong U'_{\BQ,\xi}(\Gn^+).
\]
This is easily verified by checking the Serre type relations.
\end{proof}
In view of the relations 
\eqref{eq:cc1}, \eqref{eq:cc2}, \eqref{eq:cc3} we see from 
\eqref{eq:triUprime}, Lemma \ref{lem:u-prime} the following.
\begin{proposition}
\label{prop:pm1}
Recall that $\zeta_\alpha=\pm1$ for any $\alpha\in \Delta$.
We have an isomorphism 
\begin{equation}
\label{eq:pm1}
\overline{U}_R(\Gg)\cong U'_{R,\zeta}(\Gg)
\end{equation}
of $R$-algebras given by
\begin{align}
\label{eq:isomU}
\begin{pmatrix}
y\\n
\end{pmatrix}
\leftrightarrow
h(y,n),
\qquad
\overline{e}_i^{(n)}
\leftrightarrow
e(i,n),
\qquad
\overline{f}_i^{(n)}
\leftrightarrow
f(i,n)
\end{align}
for $y\in Y$, $i\in I$, $n\in\BZ_{\geqq0}$ $($compare \cite[Proposition 33.2.3]{Lbook}$)$.
\end{proposition}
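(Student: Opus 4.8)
The plan is to assemble the claimed isomorphism $\overline{U}_R(\Gg)\cong U'_{R,\zeta}(\Gg)$ from the pieces already in hand and to check that the defining relations of $\overline{U}_R(\Gg)$ are respected. First I would recall that $\overline{U}_R(\Gg)$ is, by the triangular decomposition of the Kostant $\BZ$-form, isomorphic as an $R$-module to $\overline{U}_R(\Gn)\otimes\overline{U}_R(\Gh)\otimes\overline{U}_R(\Gn^+)$, with the tensor factors the $R$-subalgebras generated by the $\overline{f}_i^{(n)}$, the $\binom{y}{n}$, and the $\overline{e}_i^{(n)}$ respectively; this is the classical Kostant result. On the quantized side, \eqref{eq:triUprime} gives the analogous $R$-module decomposition $U'_{R,\zeta}(\Gg)\cong U'_{R,\zeta}(\Gn)\otimes U'_{R,\zeta}(\Gh)\otimes U'_{R,\zeta}(\Gn^+)$. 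So I would define the map on each tensor factor: on $\overline{U}_R(\Gn)$ and $\overline{U}_R(\Gn^+)$ use the algebra isomorphisms of Lemma \ref{lem:u-prime}, and on $\overline{U}_R(\Gh)$ use the isomorphism $\binom{y}{n}\leftrightarrow h(y,n)$ onto $U'_{R,\zeta}(\Gh)$ supplied by the preceding Lemma (the basis statement), these being compatible because both sides are the polynomial-type algebra in the divided-power/binomial generators indexed by a basis of $Y$. Tensoring gives an $R$-module isomorphism $\overline{U}_R(\Gg)\xrightarrow{\sim}U'_{R,\zeta}(\Gg)$; it remains to see it is a ring homomorphism.

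To verify multiplicativity it suffices to check that the images of the generators $\binom{y}{n}$, $\overline{e}_i^{(n)}$, $\overline{f}_i^{(n)}$ satisfy the defining relations of $\overline{U}_\BZ(\Gg)$ (base-changed to $R$): the Serre-type relations among the $e$'s and among the $f$'s, the commutation of $\overline{U}_\BZ(\Gh)$ with each divided power, the relation $\overline{e}_i^{(n)}\overline{f}_j^{(m)}=\overline{f}_j^{(m)}\overline{e}_i^{(n)}$ for $i\ne j$, and the mixed relation $\overline{e}_i^{(n)}\overline{f}_i^{(m)}=\sum_{a}\overline{f}_i^{(m-a)}\binom{\alpha_i^\vee+\dots}{a}\overline{e}_i^{(n-a)}$ in $\overline{U}_\BZ(\Gg)$. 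The Serre relations among the images are exactly the content of Lemma \ref{lem:u-prime}; the $e$-$f$ commutation for $i\ne j$ is \eqref{eq:cc2}; the mixed $i=i$ relation is \eqref{eq:cc3} together with the observation, recorded just before Lemma \ref{lem:u-prime}, that $t(i,a,2a-m-n)$ lies in $U'_{R,\zeta}(\Gh)$ and corresponds under $\binom{y}{n}\leftrightarrow h(y,n)$ to the appropriate integer polynomial in $\binom{\alpha_i^\vee}{\bullet}$ — here one must match the shift $2a-m-n$ in $t(i,a,2a-m-n)$ against the shift appearing in Kostant's formula, which is the one genuinely fiddly bookkeeping point; and the $\Gh$-commutation is \eqref{eq:cc1}, where one checks $\chi_{n\alpha_i}(h(y,m))$ against the scalar by which $\binom{y}{m}$ conjugates $\overline{e}_i^{(n)}$ in the classical algebra, i.e. that $\binom{y+\langle n\alpha_i,y\rangle}{m}$ expands correctly; this again uses Lemma \ref{lem:cr}(ii) to identify elements of $U'_{R,\zeta}(\Gh)$ by their values under all $\chi_\lambda$.

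Concretely I would organize it as: (1) state the $R$-module isomorphism from the two triangular decompositions and the three factorwise isomorphisms; (2) list the relations to be checked; (3) dispatch the $n$-$n$ (Serre) relations by Lemma \ref{lem:u-prime}, the $i\ne j$ relations by \eqref{eq:cc2}, and the $\Gh$-relations by \eqref{eq:cc1} combined with a short computation of $\chi_\lambda$-values and Lemma \ref{lem:cr}(ii); (4) handle the mixed $i=i$ relation by comparing \eqref{eq:cc3} with the classical Kostant identity, using the already-noted membership $t(i,a,s)\in\sum_{m\le a}Rh(\alpha_i^\vee,m)$ and pinning down the coefficients by evaluating $\chi_\lambda$; (5) conclude that the map is a bijective $R$-algebra homomorphism, hence an isomorphism, and note the parenthetical comparison with \cite[Proposition 33.2.3]{Lbook}.

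The main obstacle is step (4): one has to reconcile the precise form of the quantum mixed relation \eqref{eq:cc3}, with its quantum-binomial coefficients $\begin{bmatrix}k_i;s\\a\end{bmatrix}_{q_i}$ twisted by the powers of $k_i$ hidden in the definitions of $e(i,n)$, $f(i,n)$, $t(i,a,s)$ and by the case distinction $i\in J$ versus $i\notin J$, with the classical formula in $\overline{U}_\BZ(\Gg)$, whose coefficients are ordinary binomial coefficients $\binom{\alpha_i^\vee - m - n + 2a}{a}$-type expressions. The cleanest route is to avoid direct manipulation of the $k_i$-powers by instead evaluating both sides' $\Gh$-components under every $\chi_\lambda$ and invoking Lemma \ref{lem:cr}(ii), reducing the whole thing to the numerical identity $\begin{pmatrix}x+s\\n\end{pmatrix}\in\sum_m\BZ\begin{pmatrix}x\\m\end{pmatrix}$ already quoted; the $J$ versus $I\setminus J$ asymmetry then only affects which scalar twist appears and does not obstruct the argument, since \eqref{eq:UU-prime} records exactly how the $k$-twists are distributed. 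Everything else is routine given Lemmas \ref{lem:u-prime} and \ref{lem:cr} and the relations \eqref{eq:cc1}--\eqref{eq:cc3}.
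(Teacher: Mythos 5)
Your proposal is correct and follows essentially the same route as the paper's (one-sentence) proof: tensor the factorwise isomorphisms supplied by Lemma \ref{lem:u-prime} and the basis lemma for $U'_{R,\zeta}(\Gh)$ across the two triangular decompositions, then verify multiplicativity by checking the generator relations against \eqref{eq:cc1}--\eqref{eq:cc3}, using Lemma \ref{lem:cr}(ii) and $\chi_\lambda$-evaluation to match the Cartan parts. You have simply spelled out the details that the paper compresses into ``In view of the relations \eqref{eq:cc1}, \eqref{eq:cc2}, \eqref{eq:cc3} we see from \eqref{eq:triUprime}, Lemma \ref{lem:u-prime} the following.''
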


\begin{remark}
\label{rem:isomU}
{\rm
For $i\in I$ satisfying $\zeta_i=1$ we have $k_i=1$ in $U_{R,\zeta}(\Gh)$ by Lemma \ref{lem:cr}.
Hence if $\zeta_i=1$ for any $i\in I$, then \eqref{eq:pm1}
turns out to be a Hopf algebra isomorphism.
In general \eqref{eq:pm1}
does not 
preserve the comultiplication.
}
\end{remark}

\subsection{}
Let $R$ be a commutative ring equipped with $\zeta\in R^\times$.
In this subsection we assume that 
there exists some $\ell\in\BZ_{>0}$ such that $f_\ell(\zeta)=0$, where $f_\ell$ is the $\ell$-th cyclotomic polynomial.
As in \cite{Lbook} we define a root datum $({}^\sharp X,{}^\sharp\Delta,{}^\sharp Y,{}^\sharp\Delta^\vee)$ as follows.
Let
\[
r=
\begin{cases}
\ell\quad&(\ell\in 2\BZ+1)
\\
\frac{\ell}2&
(\ell\in 2\BZ),
\end{cases}
\qquad
r_\alpha
=
\frac{r}{(r,d_\alpha)}\quad(\alpha\in\Delta),
\qquad
r_i
=
r_{\alpha_i}\quad(i\in I).
\]
For $\alpha\in\Delta$ we set
\[
{}^\sharp\alpha=r_\alpha\alpha\in X,
\qquad
{}^\sharp\alpha^\vee=r_\alpha^{-1}\alpha^\vee\in Q\otimes_\BZ Y.
\]
Then ${}^\sharp\Delta=\{{}^\sharp\alpha\mid\alpha\in \Delta\}$ turns out to be a root system with $\{{}^\sharp\alpha_i\mid i\in I\}$ a set of simple roots.
Moreover,
${}^\sharp\Delta^\vee=\{{}^\sharp\alpha^\vee\mid\alpha\in \Delta\}$
is the set of coroots for the root system ${}^\sharp\Delta$.
Set
\[
{}^\sharp X=\{\lambda\in X\mid
\langle\lambda,{}^\sharp\Delta^\vee\rangle\subset \BZ
\},
\qquad
{}^\sharp Y=
\{y\in\BQ\otimes_\BZ Y\mid
\langle {}^\sharp X,y\rangle\subset\BZ\}.
\]
Then  $({}^\sharp X,{}^\sharp\Delta,{}^\sharp Y,{}^\sharp\Delta^\vee)$ is a root datum.
The Weyl group of $({}^\sharp X,{}^\sharp\Delta,{}^\sharp Y,{}^\sharp\Delta^\vee)$ is naturally identified with the Weyl group $W$ of $(X,\Delta,Y,\Delta^\vee)$.
We set
\[
{}^\sharp X^+=\{\lambda\in {}^\sharp X\mid
\langle\lambda,{}^\sharp\alpha_i^\vee\rangle\geqq0\;\;(i\in I)
\}
={}^\sharp X\cap X^+.
\]

Let ${}^\sharp G$ be the connected reductive algebraic group over $\BC$ with root datum $({}^\sharp X,{}^\sharp\Delta,{}^\sharp Y,{}^\sharp\Delta^\vee)$, and let ${}^\sharp \Gg$ be its Lie algebra.
We denote by $U_{R,\zeta}({}^\sharp\Gg)$ the quantized enveloping algebra over $R$ associated to the root datum 
$({}^\sharp X,{}^\sharp\Delta,{}^\sharp Y,{}^\sharp\Delta^\vee)$
and the $W$-invariant symmetric bilinear form \eqref{eq:bilin} on
$
\sum_{\alpha\in\Delta}\BQ{}^\sharp\alpha
=
\sum_{\alpha\in\Delta}\BQ\alpha
$.
We similarly define $R$-subalgebras 
$U_{R,\zeta}({}^\sharp\Gh)$, 
$U_{R,\zeta}({}^\sharp\Gn)$, 
$U_{R,\zeta}({}^\sharp\Gn^+)$ 
of
$U_{R,\zeta}({}^\sharp\Gg)$.

Note that for any $\alpha\in\Delta$ we have $\zeta_{{}^\sharp\alpha}=\zeta^{r_\alpha^2}=\pm1$.
Hence we can apply the results in the preceding subsection to $U_{R,\zeta}({}^\sharp\Gg)$.
In particular, we have
$\overline{U}_R({}^\sharp\Gg)\cong U'_{R,\zeta}({}^\sharp\Gg)\subset U_{R,\zeta}({}^\sharp\Gg)$.

Following Lusztig we define the quantum Frobenius homomorphism
\begin{equation}
\CF:U_{R,\zeta}(\Gg)\to U_{R,\zeta}({}^\sharp\Gg)
\end{equation}
as follows.
By $Y\subset {}^\sharp Y$ we have an inclusion
$
U_\BF(\Gh)\subset U_\BF({}^\sharp \Gh)
$
sending $k_y$ for $y\in Y$ to $k_y$ for $y\in Y\subset {}^\sharp Y$.
This induces $U_\BA(\Gh)\subset U_\BA({}^\sharp\Gh)$ because ${}^\sharp X\subset X$.
Hence we obtain a natural homomorphism
\[
\CF_\Gh:U_{R,\zeta}(\Gh)\to U_{R,\zeta}({}^\sharp\Gh).
\]
On the other hand by \cite[Theorem 35.1.7]{Lbook} we have well-defined algebra homomorphisms
\[
\CF_\Gn:U_{R,\zeta}(\Gn)\to U_{R,\zeta}({}^\sharp\Gn),
\qquad
\CF_{\Gn^+}:U_{R,\zeta}(\Gn^+)\to U_{R,\zeta}({}^\sharp\Gn^+)
\]
satisfying
\[
\CF_\Gn(f_i^{(n)})
=\begin{cases}
f_i^{(n/r_i)}&(r_i|n)
\\
0&(\text{otherwise}),
\end{cases}
\quad
\CF_{\Gn^+}(e_i^{(n)})
=\begin{cases}
e_i^{(n/r_i)}&(r_i|n)
\\
0&(\text{otherwise})
\end{cases}
\]
for $i\in I$, $n\in\BZ_{\geqq0}$ 
(see also the last paragraph of  \cite[35.5.2]{Lbook}).
The following result is proved exactly as in \cite[Theorem 35.1.9]{Lbook} using Lemma \ref{lem:cr} (ii).
\begin{proposition}
There exists a unique Hopf algebra homomorphism
\[
\CF:U_{R,\zeta}(\Gg)\to U_{R,\zeta}({}^\sharp\Gg)
\]
satisfying 
$\CF|_{U_{R,\zeta}(\Gh)}=\CF_\Gh$, 
$\CF|_{U_{R,\zeta}(\Gn)}=\CF_\Gn$, 
$\CF|_{U_{R,\zeta}(\Gn^+)}=\CF_{\Gn^+}$.
\end{proposition}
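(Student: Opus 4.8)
The plan is to follow the argument of \cite[Theorem 35.1.9]{Lbook}. Using the triangular decompositions
\[
U_{R,\zeta}(\Gg)\cong U_{R,\zeta}(\Gn)\otimes U_{R,\zeta}(\Gh)\otimes U_{R,\zeta}(\Gn^+),
\qquad
U_{R,\zeta}({}^\sharp\Gg)\cong U_{R,\zeta}({}^\sharp\Gn)\otimes U_{R,\zeta}({}^\sharp\Gh)\otimes U_{R,\zeta}({}^\sharp\Gn^+),
\]
one first defines $\CF$ as the $R$-linear map corresponding to $\CF_\Gn\otimes\CF_\Gh\otimes\CF_{\Gn^+}$. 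By construction $\CF$ restricts to $\CF_\Gn$, $\CF_\Gh$, $\CF_{\Gn^+}$ on the three subalgebras, and since $U_{R,\zeta}(\Gg)$ is generated as an $R$-algebra by them, the uniqueness assertion is automatic once $\CF$ is known to be a Hopf algebra homomorphism; that is what remains to be proved.

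The first step is to show that $\CF$ is an algebra homomorphism. Since $\CF_\Gn$, $\CF_\Gh$, $\CF_{\Gn^+}$ are already algebra homomorphisms, it suffices to check that $\CF$ is compatible with the relations of $U_{R,\zeta}(\Gg)$ used to rewrite an arbitrary product in triangular form, i.e.\ the commutation relations of $U_{R,\zeta}(\Gg)$ between $U_{R,\zeta}(\Gh)$ and the $e_i^{(n)}$, $f_j^{(m)}$ (cf.\ \cite{Lbook}; compare \eqref{eq:cc1}, \eqref{eq:cc2}, \eqref{eq:cc3}). The relations $k_y e_i^{(n)}=q^{n\langle\alpha_i,y\rangle}e_i^{(n)}k_y$ and $k_y f_i^{(n)}=q^{-n\langle\alpha_i,y\rangle}f_i^{(n)}k_y$ are preserved because $\CF_\Gh(k_y)=k_y$, $\CF_{\Gn^+}$ sends $e_i^{(n)}$ to $e_i^{(n/r_i)}$ when $r_i\mid n$ and to $0$ otherwise, and ${}^\sharp\alpha_i=r_i\alpha_i$ makes the exponents agree; the relation $e_i^{(n)}f_j^{(m)}=f_j^{(m)}e_i^{(n)}$ for $i\ne j$ is preserved trivially. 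The substantial case is
\[
e_i^{(n)}f_i^{(m)}=\sum_{t=0}^{\min(m,n)}f_i^{(m-t)}\begin{bmatrix}k_i;\,2t-m-n\\ t\end{bmatrix}_{q_i}e_i^{(n-t)},
\]
whose right-hand side is already in triangular form. Applying $\CF$, the vanishing of $\CF_\Gn$ and $\CF_{\Gn^+}$ off multiples of $r_i$, together with the fact (again a consequence of Lemma \ref{lem:cr}~(ii)) that $\CF_\Gh$ kills $\begin{bmatrix}k_i;\,s\\ t\end{bmatrix}_{q_i}$ when $r_i\mid s$ but $r_i\nmid t$, shows that both sides vanish unless $r_i\mid m$ and $r_i\mid n$, and that then only the summands with $r_i\mid t$ survive. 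Writing $m=r_i\overline m$, $n=r_i\overline n$, $t=r_it'$ and comparing term by term with the analogous relation in $U_{R,\zeta}({}^\sharp\Gg)$ via its triangular decomposition, the matching reduces to the identity
\[
\CF_\Gh\left(\begin{bmatrix}k_i;\,r_i(2t'-\overline m-\overline n)\\ r_it'\end{bmatrix}_{q_i}\right)
=\begin{bmatrix}{}^\sharp k_i;\,2t'-\overline m-\overline n\\ t'\end{bmatrix}_{{}^\sharp q_i}
\]
in $U_{R,\zeta}({}^\sharp\Gh)$.

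To establish this identity (and the auxiliary vanishing statement) I would invoke Lemma \ref{lem:cr}~(ii): since $\CF_\Gh$ fixes each $k_y$, one has $\chi_\mu\circ\CF_\Gh=\chi_\mu$ for every $\mu\in{}^\sharp X$, so it is enough to compare the two sides under all such $\chi_\mu$. Using ${}^\sharp q_i=\zeta^{r_i^2 d_i}=\zeta_i^{r_i^2}$, ${}^\sharp k_i=k_i^{r_i}$ and $\langle\mu,{}^\sharp\alpha_i^\vee\rangle=r_i^{-1}\langle\mu,\alpha_i^\vee\rangle\in\BZ$, the problem comes down to the numerical identities
\[
\begin{bmatrix}r_i a\\ r_i b\end{bmatrix}_{\zeta_i}=\begin{bmatrix}a\\ b\end{bmatrix}_{\zeta_i^{r_i^2}},
\qquad
\begin{bmatrix}r_i a\\ b\end{bmatrix}_{\zeta_i}=0 \ \ (r_i\nmid b)
\qquad(a\in\BZ,\ b\in\BZ_{\geqq0})
\]
for the Gaussian binomial coefficients formed with $[n]_v=(v^n-v^{-n})/(v-v^{-1})$. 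These are the standard divisibility properties of $q$-binomials at a root of unity used in \cite{Lbook}, and I expect this last combinatorial point, together with the bookkeeping of the powers of $\zeta_i$ hidden in the normalizations of $\begin{bmatrix}k_i;s\\ n\end{bmatrix}_{q_i}$, to be the main obstacle.

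Finally, to see that $\CF$ is a bialgebra homomorphism it suffices to verify $\varepsilon\circ\CF=\varepsilon$ and $\Delta\circ\CF=(\CF\otimes\CF)\circ\Delta$ on the algebra generators $k_y$, $e_i^{(n)}$, $f_i^{(n)}$. Compatibility with $\varepsilon$ is immediate from the support condition $r_i\mid n$, and the case $k_y$ of the comultiplication identity is clear. For $e_i^{(n)}$ one expands $\Delta(e_i^{(n)})$ as a sum over $a+b=n$ of terms $q_i^{ab}\,e_i^{(a)}k_i^{b}\otimes e_i^{(b)}$; applying $\CF\otimes\CF$ annihilates every summand unless $r_i\mid n$ (using $r_i\mid a\Leftrightarrow r_i\mid b$ once $r_i\mid n$), and the surviving sum, rewritten via $\zeta_i^{r_i^2}={}^\sharp q_i$ and $k_i^{r_i}={}^\sharp k_i$, is precisely the expansion of ${}^\sharp\Delta(e_i^{(n/r_i)})=\Delta(\CF(e_i^{(n)}))$; the case of $f_i^{(n)}$ is entirely symmetric. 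A bialgebra homomorphism between Hopf algebras automatically commutes with the antipodes, so $\CF$ is the required Hopf algebra homomorphism, reproducing the scheme of \cite[Theorem 35.1.9]{Lbook} in the present setting.
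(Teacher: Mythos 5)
Your proposal is correct and follows exactly the approach the paper intends: the paper's entire ``proof'' is the one-line remark that the statement is proved as in Lusztig's Theorem 35.1.9 using Lemma~\ref{lem:cr}~(ii), and you have simply carried out that argument in detail, using the triangular decomposition to define $\CF$, reducing all Cartan-part identities to the standard $q$-binomial congruences via Lemma~\ref{lem:cr}~(ii), and invoking the automatic compatibility of a bialgebra map with antipodes. (The only stylistic quibble is a sign in the exponent of $q_i$ in your formula for $\Delta(e_i^{(n)})$, which does not affect the argument since $\zeta_i^{\pm r_i^2 a'b'}=({}^\sharp q_i)^{\pm a'b'}$ either way.)
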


\section{Representations}
\subsection{}
Let $\CH$ be a Hopf algebra over a commutative ring $R$.
For left $\CH$-modules $M_1$, $M_2$ we regard the left 
$\CH\otimes\CH$-module $M_1\otimes M_2=M_1\otimes_RM_2$ as a left $\CH$-module via the comultiplication $\Delta:\CH\to\CH\otimes\CH$.
For a left (resp.\ right) $\CH$-module $M$ we regard  $M^*=\Hom_R(M,R)$ as a right (resp.\ left) $\CH$-module by
\begin{align*}
\langle m^*h,m\rangle
=\langle m^*, hm\rangle,
\qquad
(\text{resp.}\;\;
\langle hm^*,m\rangle
=\langle m^*, mh\rangle)
\end{align*}
for $h\in \CH$, $m\in M$, $m^*\in M^*$.

\subsection{}
For a left (resp. right) $\overline{U}(\Gg)$-module $V$ and $\mu\in X$ we set 
\[
V_\mu=\{v\in V\mid hv=\mu(h)v\;(h\in\Gh)\},
\quad
(\text{resp.} \;V_\mu=\{v\in V\mid vh=\mu(h)v\;(h\in\Gh)\}).
\]
For $\lambda\in X^+$ we define a $\overline{U}(\Gg)$-module $\overline{V}(\lambda)$ by
\begin{align*}
\overline{V}(\lambda)
=&\overline{U}(\Gg)/
\left(
\sum_{h\in \Gh}
\overline{U}(\Gg)(h-\lambda(h))
+\sum_{i\in I}\overline{U}(\Gg)\overline{e}_i
+\sum_{i\in I}\overline{U}(\Gg)
\overline{f}_i^{\langle\lambda,\alpha_i^\vee\rangle+1}
\right).
\end{align*}
Then $\overline{V}(\lambda)$ is a finite-dimensional  irreducible $\overline{U}(\Gg)$-module, which has the weight space decomposition
$
\overline{V}(\lambda)
=
\bigoplus_{\mu\in X}\overline{V}(\lambda)_\mu
$.
Set $\overline{v}_\lambda=\overline{1}\in \overline{V}(\lambda)$, so that 
$\overline{V}(\lambda)_\lambda=\BC\overline{v}_\lambda$.
Set $\overline{V}^*(\lambda)=\Hom_\BC(\overline{V}(\lambda),\BC)$.
It is a finite-dimensional  irreducible right $\overline{U}(\Gg)$-module.
The weight space decomposition 
of $\overline{V}(\lambda)$ gives the 
weight space decomposition 
$
\overline{V}^*(\lambda)
=
\bigoplus_{\mu\in X}\overline{V}^*(\lambda)_\mu
$ of $\overline{V}^*(\lambda)$, where 
$\overline{V}^*(\lambda)_\mu=
(\overline{V}(\lambda)_\mu)^*$.
We define 
$\overline{v}^*_\lambda\in 
\overline{V}^*(\lambda)_\lambda$ by $\langle \overline{v}^*_\lambda,\overline{v}_\lambda\rangle=1$.

For $\lambda\in X^+$ we define a $\overline{U}_\BZ(\Gg)$-submodule $\overline{\Delta}_\BZ(\lambda)$ of $\overline{V}(\lambda)$ and 
a right $\overline{U}_\BZ(\Gg)$-submodule ${\overline{\Delta}}^*_\BZ(\lambda)$ of $\overline{V}^*(\lambda)$
by
\begin{align*}
\overline{\Delta}_\BZ(\lambda)=\overline{U}_\BZ(\Gg)\overline{v}_\lambda,
\qquad
{\overline{\Delta}}_\BZ^*(\lambda)=\overline{v}^*_\lambda \overline{U}_\BZ(\Gg)
.
\end{align*}
Then 
$\overline{\Delta}_\BZ(\lambda)$ and 
$\overline{\Delta}_\BZ^*(\lambda)$ 
are free $\BZ$-modules satisfying 
\[
\BC\otimes_\BZ \overline{\Delta}_\BZ(\lambda)\cong \overline{V}(\lambda), \quad
\BC\otimes_\BZ \overline{\Delta}^*_\BZ(\lambda)\cong 
\overline{V}^*(\lambda).
\]
For $\lambda\in X^+$ we  define a $\overline{U}_\BZ(\Gg)$-submodule 
$\overline{\nabla}_\BZ(\lambda)$ of $\overline{V}(\lambda)$ and 
a right $\overline{U}_\BZ(\Gg)$-submodules 
$\overline{\nabla}^*_\BZ(\lambda)$ of $\overline{V}^*(\lambda)$
by
\begin{align*}
\overline{\nabla}_\BZ(\lambda)=&
\{v\in \overline{V}(\lambda)
\mid
\langle\overline{\Delta}^*_\BZ(\lambda),v\rangle
\subset\BZ\}
\cong\Hom_\BZ(\overline{\Delta}^*_\BZ(\lambda),\BZ),
\\
\overline{\nabla}_\BZ^*(\lambda)=&
\{v^*\in \overline{V}^*(\lambda)
\mid
\langle v^*,\overline{\Delta}_\BZ(\lambda)\rangle
\subset\BZ\}
\cong\Hom_\BZ(\overline{\Delta}_\BZ(\lambda),\BZ).
\end{align*}
Then we have
\[
\overline{\Delta}_\BZ(\lambda)\subset \overline{\nabla}_\BZ(\lambda)
\subset \overline{V}(\lambda),
\qquad
\overline{\Delta}^*_\BZ(\lambda)\subset \overline{\nabla}^*_\BZ(\lambda)
\subset
\overline{V}^*(\lambda).
\]
Moreover,
$\overline{\nabla}_\BZ(\lambda)$ and 
$\overline{\nabla}_\BZ^*(\lambda)$ 
are free $\BZ$-modules satisfying 
\[
\BC\otimes_\BZ \overline{\nabla}_\BZ(\lambda)\cong \overline{V}(\lambda),
\quad
\BC\otimes_\BZ \overline{\nabla}^*_\BZ(\lambda)\cong 
\overline{V}^*(\lambda).
\]

Let $R$ be a commutative ring.
For a $\overline{U}_R(\Gg)$-module $V$ and $\mu\in X$ we set 
\[
V_\mu=\{v\in V\mid hv=\overline{\chi}_\mu(h)v\;(h\in\overline{U}_R(\Gh))\}.
\]
We say that 
a $\overline{U}_R(\Gg)$-module $V$ is integrable if it has the weight space decomposition
$V=\bigoplus_{\mu\in X}V_\mu$, and for any $v\in V$ and $i\in I$ we have 
$\overline{e}_i^{(n)}v=\overline{f}_i^{(n)}v=0$ 
for $n\gg0$.
We denote by $\Mod_{\inte}(\overline{U}_R(\Gg))$ the category of integrable $\overline{U}_R(\Gg)$-modules

Let $V$ be an integrable $\overline{U}_R(\Gg)$-module.
For $i\in I$ we define an invertible $R$-homomorphism 
$
\overline{T}_i|_V=\overline{T}_i:V\to V$
by 
\[
\overline{T}_iv=
\sum_{a-b+c=\langle\lambda,\alpha_i^\vee\rangle}
(-1)^bf_i^{(a)}e_i^{(b)}f_i^{(c)}v
\qquad(\lambda\in X, v\in V_\lambda).
\]
For $w\in W$  we define an invertible $R$-homomorphism 
$
\overline{T}_w|_V=\overline{T}_w:V\to V$
by 
$
 \overline{T}_w=
\overline{T}_{i_1}\dots
\overline{T}_{i_N}$,
where $w=s_{i_1}\dots s_{i_N}$ is a reduced expression of $w$.
It does not depend on the choice of a reduced expression.
Moreover, 
we have
$
\overline{T}_wV_\lambda=V_{w\lambda}
$ for $\lambda\in X$.
Regarding $\overline{U}_R(\Gg)$ as an integrable $\overline{U}_R(\Gg)$-module via the adjoint action we have
\begin{equation}
(\overline{T}_w|_V)(uv)=
(\overline{T}_w|_{\overline{U}_R(\Gg)}(u))(\overline{T}_w|_V(v))
\qquad
(u\in \overline{U}_R(\Gg), v\in V).
\end{equation}
The following is well-known.

\begin{lemma}
\label{lem:oTtens}
For integrable $\overline{U}_{R}(\Gg)$-modules $V_1$, $V_2$ 
we have
\[
\overline{T}_w|_{V_1\otimes V_2}=
\overline{T}_w|_{V_1}
\otimes
\overline{T}_w|_{V_2}.
\]
\end{lemma}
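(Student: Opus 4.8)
The plan is to reduce everything to the rank one case, i.e. to the case of a single simple reflection $s_i$, and then to verify the identity on weight vectors by a direct computation using the explicit formula for $\overline{T}_i$ together with the known comultiplication on $\overline{U}_R(\Gg)$. First I would note that since $\overline{T}_w = \overline{T}_{i_1}\cdots\overline{T}_{i_N}$ for a reduced expression and the claimed formula is multiplicative in $w$ (the tensor product of composites is the composite of tensor products), it suffices to treat $w=s_i$ for a fixed $i\in I$. Thus the goal becomes $\overline{T}_i|_{V_1\otimes V_2} = \overline{T}_i|_{V_1}\otimes\overline{T}_i|_{V_2}$.

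Next I would recall that the divided powers $\overline{e}_i^{(b)}$, $\overline{f}_i^{(a)}$ act on a tensor product through the iterated comultiplication, and that the relevant comultiplication formula is the classical one, $\Delta(\overline{e}_i^{(b)}) = \sum_{b_1+b_2=b}\overline{e}_i^{(b_1)}\otimes\overline{e}_i^{(b_2)}$ and similarly for $\overline{f}_i$ (this is the $q=1$ specialization, and over $R$ it follows from the corresponding $\BZ$-form identities; note these are \emph{not} twisted by $k_i$ since we are in $\overline{U}_R(\Gg)$, not in $U_{R,\zeta}(\Gg)$). Taking a weight vector $v_1\otimes v_2$ with $v_1\in (V_1)_{\lambda_1}$, $v_2\in (V_2)_{\lambda_2}$, so that $v_1\otimes v_2$ has weight $\lambda=\lambda_1+\lambda_2$, one expands
\[
\overline{T}_i(v_1\otimes v_2)
=\sum_{a-b+c=\langle\lambda,\alpha_i^\vee\rangle}(-1)^b
\sum \overline{f}_i^{(a_1)}\overline{e}_i^{(b_1)}\overline{f}_i^{(c_1)}v_1
\otimes
\overline{f}_i^{(a_2)}\overline{e}_i^{(b_2)}\overline{f}_i^{(c_2)}v_2,
\]
where the inner sum runs over $a_1+a_2=a$, $b_1+b_2=b$, $c_1+c_2=c$. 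The point is to reorganize this triple-indexed sum into the product of the two single sums defining $\overline{T}_i|_{V_1}(v_1)\otimes\overline{T}_i|_{V_1}(v_2)$: one needs that whenever $a_j-b_j+c_j=\langle\lambda_j,\alpha_i^\vee\rangle$ for $j=1,2$, the constraint $a-b+c=\langle\lambda,\alpha_i^\vee\rangle$ holds automatically (which is immediate by addition), and conversely that the cross terms, where $a_1-b_1+c_1\ne\langle\lambda_1,\alpha_i^\vee\rangle$, cancel. This last cancellation is exactly the content of the rank one $SL_2$ (or $PGL_2$) identity and is where the real work lies; it can be extracted from the fact that $\overline{T}_i$ is, up to weight factors, the Weyl group element acting in the rank one representation theory, and that this action is compatible with tensor products of $SL_2$-modules — a statement one can check on highest weight vectors and then propagate by the $\Gsl_2$-action.

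The main obstacle I anticipate is precisely this combinatorial cancellation of the off-diagonal terms in the rank one computation: the triple sum over $(a_1,b_1,c_1)$ and $(a_2,b_2,c_2)$ does not factor term-by-term, and one must invoke a nontrivial $SL_2$-identity (essentially the statement that $\overline{T}_i$ intertwines the two halves of an $\Gsl_2$-triple in a way compatible with Hopf structure) to see that only the ``balanced'' terms survive. A clean way around grinding through this is to observe that it is a classical, characteristic-free fact for the ordinary enveloping algebra $\overline{U}_\BZ(\Gsl_2)$ — the braid group action on integrable modules is monoidal — so that after base change to $R$ the identity for $\overline{U}_R(\Gg)$ follows formally by restricting to the $\Gsl_2$-copy generated by $\overline{e}_i,\overline{f}_i$ and using that $\overline{T}_i|_V$ depends only on this restricted action together with the weight grading. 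Since the statement is asserted to be well-known, it suffices to record this reduction and cite the standard reference for the rank one case.
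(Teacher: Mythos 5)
The paper states this lemma without proof, remarking only that it is ``well-known,'' so there is no in-text argument to compare against. Your sketch is a correct reconstruction of the standard argument, and you have correctly identified both the easy part (the reduction to a single simple reflection is just the multiplicativity of $\otimes$) and the genuinely substantive part (the cancellation of off-diagonal terms in the rank-one expansion).

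One could tighten the argument you gesture at in the final paragraph. The cleanest route avoids the triple-sum bookkeeping entirely: for an integrable module $V$, the theorem that $\overline{T}_i$ carries $V_\lambda$ to $V_{s_i\lambda}$ is equivalent to saying that the constrained sum defining $\overline{T}_i v$ coincides with the full unconstrained sum $\sum_{a,b,c\geqq 0}(-1)^b\overline{f}_i^{(a)}\overline{e}_i^{(b)}\overline{f}_i^{(c)}v$, i.e.\ with $\exp(\overline{f}_i)\exp(-\overline{e}_i)\exp(\overline{f}_i)\,v$ interpreted as a (finite on each vector) sum. Since $\overline{e}_i$ and $\overline{f}_i$ are primitive in $\overline{U}_R(\Gg)$, each exponential is group-like in the appropriate completion, hence so is their product $\Sigma=\exp(\overline{f}_i)\exp(-\overline{e}_i)\exp(\overline{f}_i)$; and for a group-like element $\Delta(\Sigma)=\Sigma\otimes\Sigma$ gives $\Sigma(v_1\otimes v_2)=\Sigma v_1\otimes\Sigma v_2$ immediately. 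This makes the cancellation of cross-terms a consequence of a statement that is easier to isolate and prove (the rank-one identification of $\overline{T}_i$ with a group-like element), rather than something one has to see directly in the indices. Note one small correction to your framing: the issue is not that the triple sum fails to factor term-by-term into a product of two single sums — it does factor, once you insert the unconstrained sums — but that the paper's \emph{definition} uses a constrained sum, and the identification of constrained with unconstrained is precisely the $\Gsl_2$-content you are citing. With that identification in hand the tensor compatibility is formal.
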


For $\lambda\in X^+$ we define left $\overline{U}_R(\Gg)$-modules $\overline{\Delta}_R(\lambda)$, $\overline{\nabla}_R(\lambda)$ and right $\overline{U}_R(\Gg)$-modules $\overline{\Delta}^*_R(\lambda)$ $\overline{\nabla}^*_R(\lambda)$ as the base changes of 
$\overline{\Delta}_\BZ(\lambda)$, $\overline{\nabla}_\BZ(\lambda)$,  $\overline{\Delta}^*_\BZ(\lambda)$ $\overline{\nabla}^*_\BZ(\lambda)$
respectively.
We call $\overline{\Delta}_R(\lambda)$, $\overline{\Delta}^*_R(\lambda)$ the (left and right) Weyl modules with highest weight $\lambda$, and
$\overline{\nabla}_R(\lambda)$, $\overline{\nabla}^*_R(\lambda)$
the (left and right) dual Weyl modules with highest weight $\lambda$.
The left $\overline{U}_R(\Gg)$-modules 
$\overline{\Delta}_R(\lambda)$, $\overline{\nabla}_R(\lambda)$ are integrable.

\subsection{}
For a left (resp. right) ${U}_\BF(\Gg)$-module $V$ and $\mu\in X$ we set 
\begin{gather*}
V_\mu=\{v\in V\mid hv=\chi_\mu(h)v\;(h\in{U}_\BF(\Gh))\},
\\
(\text{resp.} \;\;V_\mu=\{v\in V\mid vh=\chi_\mu(h)v\;(h\in{U}_\BF(\Gh))\}).
\end{gather*}
For $\lambda\in X^+$ we define a $U_\BF(\Gg)$-module $V_\BF(\lambda)$ by
\begin{align*}
V_\BF(\lambda)
=U_\BF(\Gg)/
\left(
\sum_{h\in U_\BF(\Gh)}U_\BF(\Gg)(h-\chi_\lambda(h))
+\sum_{i\in I}U_\BF(\Gg)e_i
+\sum_{i\in I}U_\BF(\Gg)
f_i^{\langle\lambda,\alpha_i^\vee\rangle+1}
\right).
\end{align*}
Then $V_\BF(\lambda)$ is a finite-dimensional  irreducible $U_\BF(\Gg)$-module.
Set $V^*_\BF(\lambda)=\Hom_\BF(V_\BF(\lambda),\BF)$.
It is a finite-dimensional  irreducible right $U_\BF(\Gg)$-module.
The weight space decomposition 
$
V_\BF(\lambda)=\bigoplus_{\mu\in X}V_\BF(\lambda)_\mu
$
of $V_\BF(\lambda)$ gives the weight space decomposition 
$
V^*_\BF(\lambda)=\bigoplus_{\mu\in X}V^*_\BF(\lambda)_\mu
$ of 
$V^*_\BF(\lambda)$, where 
$
V_\BF^*(\lambda)_\mu=
(V_\BF(\lambda)_\mu)^*
$.
Set $v_\lambda=\overline{1}\in V_\BF(\lambda)$.
Then we have $V_\BF(\lambda)_\lambda=\BF v_\lambda$.
We define $v^*_\lambda\in V^*_\BF(\lambda)_\lambda$ by $\langle v^*_\lambda,v_\lambda\rangle=1$.

\subsection{}
For $\lambda\in X^+$ we define a $U_\BA(\Gg)$-submodule $\Delta_\BA(\lambda)$ of $V_\BF(\lambda)$ and 
a right $U_\BA(\Gg)$-submodule $\Delta^*_\BA(\lambda)$ of $V^*_\BF(\lambda)$
by
\begin{align*}
\Delta_\BA(\lambda)=U_\BA(\Gg)v_\lambda,
\qquad
\Delta_\BA^*(\lambda)=v^*_\lambda U_\BA(\Gg)
.
\end{align*}
They have the weight space decomposition 
\[
\Delta_\BA(\lambda)
=
\bigoplus_{\mu\in X}\Delta_\BA(\lambda)_\mu,
\quad
\Delta^*_\BA(\lambda)
=
\bigoplus_{\mu\in X}\Delta^*_\BA(\lambda)_\mu,
\]
where
\begin{align*}
\Delta_\BA(\lambda)_\mu
=&\,\{v\in \Delta_\BA(\lambda)\mid hv=\chi_\mu(h)v\;(h\in U_\BA(\Gh))\},
\\
\Delta^*_\BA(\lambda)_\mu
=&\,\{v\in \Delta^*_\BA(\lambda)\mid vh=\chi_\mu(h)v\;(h\in U_\BA(\Gh))\}.
\end{align*}
It follows from  the deep theory of canonical bases that $\Delta_\BA(\lambda)_\mu$ and $\Delta^*_\BA(\lambda)_\mu$ are free $\BA$-modules (see \cite{Lbook}).
In particular, 
$\Delta_\BA(\lambda)$ and 
$\Delta_\BA^*(\lambda)$ 
are free $\BA$-modules satisfying 
\[
\BF\otimes_\BA \Delta_\BA(\lambda)\cong V_\BF(\lambda), \quad
\BF\otimes_\BA \Delta^*_\BA(\lambda)\cong V^*_\BF(\lambda).
\]
We define a $U_\BA(\Gg)$-submodule 
$\nabla_\BA(\lambda)$ of $V_\BF(\lambda)$ and 
a right $U_\BA(\Gg)$-submodules 
$\nabla^*_\BA(\lambda)$ of $V^*_\BF(\lambda)$
by
\begin{align*}
\nabla_\BA(\lambda)=&
\{v\in V_\BF(\lambda)\mid
\langle 
\Delta^*_\BA(\lambda),v\rangle\subset\BA\}
\cong
\Hom_\BA(\Delta^*_\BA(\lambda),\BA),
\\
\nabla_\BA^*(\lambda)=&
\{v^*\in V^*_\BF(\lambda)\mid
\langle 
v^*,\Delta_\BA(\lambda)\rangle\subset\BA\}
\cong
\Hom_\BA(\Delta_\BA(\lambda),\BA).
\end{align*}
Then we have
\[
\Delta_\BA(\lambda)\subset \nabla_\BA(\lambda)
\subset
V_\BF(\lambda),
\qquad
\Delta^*_\BA(\lambda)\subset \nabla^*_\BA(\lambda)
\subset
V^*_\BF(\lambda).
\]
We have the weight space decomposition 
\[
\nabla_\BA(\lambda)
=
\bigoplus_{\mu\in X}\nabla_\BA(\lambda)_\mu,
\quad
\nabla^*_\BA(\lambda)
=
\bigoplus_{\mu\in X}\nabla^*_\BA(\lambda)_\mu,
\]
where
\begin{align*}
\nabla_\BA(\lambda)_\mu
=&\{v\in \nabla_\BA(\lambda)\mid hv=\chi_\mu(h)v\;(h\in U_\BA(\Gh))\}
=\Hom_\BA(\Delta^*_\BA(\lambda)_\mu,\BA),
\\
\nabla^*_\BA(\lambda)_\mu
=&\{v\in \nabla^*_\BA(\lambda)\mid vh=\chi_\mu(h)v\;(h\in U_\BA(\Gh))\}
=\Hom_\BA(\Delta_\BA(\lambda)_\mu,\BA).
\end{align*}
By the duality $\nabla_\BA(\lambda)_\mu$ and $\nabla^*_\BA(\lambda)_\mu$ are free $\BA$-modules satisfying 
\[
\BF\otimes_\BA \nabla_\BA(\lambda)_\mu\cong V_\BF(\lambda)_\mu,
\quad
\BF\otimes_\BA \nabla^*_\BA(\lambda)_\mu\cong V^*_\BF(\lambda)_\mu.
\]
In particular, 
$\nabla_\BA(\lambda)$ and 
$\nabla_\BA^*(\lambda)$ 
are free $\BA$-modules satisfying 
\[
\BF\otimes_\BA \nabla_\BA(\lambda)\cong V_\BF(\lambda),
\quad
\BF\otimes_\BA \nabla^*_\BA(\lambda)\cong V^*_\BF(\lambda).
\]

\subsection{}
Let $R$ be a commutative ring equipped with $\zeta\in R^\times$, and consider 
$U_{R,\zeta}(\Gg)=R\otimes_\BA U_\BA(\Gg)$.
For a $U_{R,\zeta}(\Gg)$-module $V$ and $\mu\in X$ we set 
\[
V_\mu=\{v\in V\mid hv={\chi}_\mu(h)v\;(h\in U_{R,\zeta}(\Gh))\}.
\]
We say that 
a $U_{R,\zeta}(\Gg)$-module $V$ is integrable if it has the weight space decomposition
$V=\bigoplus_{\mu\in X}V_\mu$, and for any $v\in V$ and $i\in I$ we have 
${e}_i^{(n)}v={f}_i^{(n)}v=0$ 
for $n\gg0$.
We denote by $\Mod_{\inte}({U}_{R,\zeta}(\Gg))$ the category of integrable ${U}_{R,\zeta}(\Gg)$-modules.

Let $V$ be an integrable  $U_{R,\zeta}(\Gg)$-module.
Following \cite{Lbook} 
we define an invertible $R$-homomorphism 
$
T_i|_V=T_i:V\to V
$
for $i\in I$ 
by 
\begin{equation}
\label{eq:Ti}
{T}_iv=
\sum_{a-b+c=\langle\lambda,\alpha_i^\vee\rangle}
(-1)^b\zeta_i^{b-ac}f_i^{(a)}e_i^{(b)}f_i^{(c)}v
\qquad(v\in V_\lambda).
\end{equation}
Note $T_i=T'_{i,-1}$ in the notation of \cite{Lbook}.
We will use the following fact (see \cite[Proposition 5.3.4]{Lbook}).
\begin{lemma}
\label{lem:Ttens}
Let $i\in I$, and let
$V_1$, $V_2$ be integrable $U_{R,\zeta}(\Gg)$-modules.
\begin{itemize}
\item[(i)]
Assume that $v_1\in V_1$ satisfies 
$e_i^{(n)}v_1=0$ for any $n>0$.
Then we have
\[
(T_i|_{V_1\otimes V_2})(v_1\otimes v_2)
=T_iv_1\otimes T_iv_2
\]
for any $v_2\in V_2$.
\item[(ii)]
Assume $\zeta_i^2=1$.
Then we have 
${T}_i|_{V_1\otimes V_2}={T}_i|_{V_1}\otimes
{T}_i|_{V_2}$.
\end{itemize}
\end{lemma}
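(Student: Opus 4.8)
The plan is to deduce this from the known compatibility of Lusztig's braid operators $T_i$ with tensor products, reducing the general case to the relevant pieces via the triangular decomposition and the $q$-divided-power formulas.

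First I would recall the defining formula \eqref{eq:Ti} and observe that the strategy for part (i) is to reduce to the rank-one situation: since $V_1$, $V_2$ are integrable and the operator $T_i$ depends only on the $\Gsl_2$-structure attached to $i\in I$, one may regard $V_1$ and $V_2$ as modules over the rank-one subalgebra generated by $e_i^{(n)}$, $f_i^{(n)}$ and $U_{R,\zeta}(\Gh)$, and check the identity on weight vectors. Concretely, for $v_1\in (V_1)_\mu$ with $e_i^{(n)}v_1=0$ for all $n>0$, and $v_2\in (V_2)_\nu$, one expands $(T_i|_{V_1\otimes V_2})(v_1\otimes v_2)$ using the comultiplication $\Delta(e_i)=e_i\otimes1+k_i\otimes e_i$, $\Delta(f_i)=f_i\otimes k_i^{-1}+1\otimes f_i$ and the resulting formulas for $\Delta(e_i^{(b)})$, $\Delta(f_i^{(a)})$, $\Delta(f_i^{(c)})$ as $q$-binomial sums. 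Because $e_i^{(n)}v_1=0$ for $n>0$, most terms drop out, and after collecting the powers of $\zeta_i$ and the $q_i$-binomial coefficients one recognizes the right-hand side $T_iv_1\otimes T_iv_2$. This is exactly the content of \cite[Proposition 5.3.4]{Lbook} transported to the present $R$-form via base change from $U_\BA(\Gg)$, so the cleanest route is to cite that reference and note that the identity, being an identity in $U_\BA(\Gg)\otimes_\BA U_\BA(\Gg)$ applied to $\BA$-forms of the modules, descends to $U_{R,\zeta}(\Gg)$ by applying $R\otimes_\BA-$.

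For part (ii), assume $\zeta_i^2=1$. The point is that when $\zeta_i^2=1$ the sign factor $\zeta_i^{b-ac}$ in \eqref{eq:Ti} becomes $\zeta_i^{b+ac}$, and more importantly the operator $T_i$ then behaves like the classical $\overline{T}_i$ on the rank-one pieces; in particular $T_i$ becomes ``grouplike'' with respect to the coproduct in the sense needed. Given an arbitrary $v_2\in V_2$, decompose it using the rank-one integrable structure: $V_2$, as a module over the $\Gsl_2$ associated to $i$, is a sum of finite-dimensional modules, so $v_2$ is a finite sum of vectors of the form $f_i^{(t)}v_2'$ with $e_i^{(n)}v_2'=0$ for $n>0$. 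By part (i) applied with the roles of $V_1$ and $V_2$ suitably interpreted — or rather by the analogous statement for a highest-weight vector in the second factor, which follows by the same expansion — together with the relation $T_i f_i^{(t)} T_i^{-1} = (\text{known element acting as }\pm\zeta_i^{\star}e_i^{(t)})$ coming from the rank-one $\Gsl_2$ computation, one propagates the tensor-product compatibility from highest-weight vectors to all of $V_2$. Combining with part (i) for the $V_1$ factor gives $T_i|_{V_1\otimes V_2}=T_i|_{V_1}\otimes T_i|_{V_2}$ on all of $V_1\otimes V_2$.

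The main obstacle is the bookkeeping in part (i): the coproduct expansions of the divided powers $e_i^{(b)}$, $f_i^{(a)}$, $f_i^{(c)}$ produce $q_i$-binomial sums with $k_i^{\pm}$-twists and powers of $\zeta_i$, and verifying that everything collapses to $T_iv_1\otimes T_iv_2$ requires a nontrivial $q$-binomial identity — precisely the one underlying \cite[Proposition 5.3.4]{Lbook}. Rather than reprove it, I would state that the identity is an identity of operators coming from an identity in $U_\BA(\Gg)^{\otimes 2}$ established in \cite{Lbook}, and that applying $R\otimes_\BA(-)$ yields the assertion over $U_{R,\zeta}(\Gg)$; the only additional remark needed for (ii) is that the hypothesis $\zeta_i^2=1$ removes the asymmetry between the two tensor factors, so the one-sided statement (i) can be iterated on both factors to give the clean formula. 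The rank-one reduction and the base-change argument are routine once the reference identity is invoked, so I would present this as a short deduction with the heavy lifting delegated to \cite{Lbook}.
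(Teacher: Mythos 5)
The paper's own ``proof'' of this lemma is nothing more than the citation to \cite[Proposition 5.3.4]{Lbook}. Your treatment of part (i) --- delegating the $q$-binomial bookkeeping to Lusztig and noting that the identity descends from the $\BA$-form by base change --- is fine and matches the spirit of the paper's citation.

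The argument you sketch for part (ii), however, has a genuine gap. You decompose $v_2$ (the \emph{second} tensor factor) as a sum of $f_i^{(t)}v_2'$ with $e_i^{(n)}v_2'=0$ for $n>0$, and invoke ``the analogous statement for a highest-weight vector in the second factor.'' But this analogue is not correct as stated: because $T_i$ exchanges $e_i$-highest with $f_i$-lowest (e.g.\ $e_iz=0$ iff $f_i(T_iz)=0$), and because the quasi-$R$-type correction to $T_i|_{V_1\otimes V_2}$ has $f_i^{(n)}$ acting on the first factor and $e_i^{(n)}$ on the second, the vanishing hypothesis in the second factor that makes the correction collapse is $f_i^{(n)}v_2=0$, not $e_i^{(n)}v_2=0$. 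So the ``analogous statement'' you want to combine with (i) is simply false, and your decomposition of $v_2$ is aimed at the wrong kind of extremal vector.

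Moreover, the place where $\zeta_i^2=1$ actually does the work is not identified. The correct mechanism is: on an integrable module $\chi_\lambda(k_i)=\zeta_i^{\langle\lambda,\alpha_i^\vee\rangle}=\pm1$, so $k_i^2$ acts as the identity; a direct check then shows that $(T_i\otimes T_i)\circ\Delta=\Delta\circ T_i$ on the rank-one subalgebra generated by $e_i^{(n)}$, $f_i^{(n)}$, $k_i^{\pm1}$. Granting this, the operator $(T_i|_{V_1\otimes V_2})^{-1}\circ(T_i|_{V_1}\otimes T_i|_{V_2})$ is a morphism of modules over that subalgebra, it equals the identity on $(\ker e_i|_{V_1})\otimes V_2$ by part (i), and $(\ker e_i|_{V_1})\otimes V_2$ generates $V_1\otimes V_2$ over the rank-one subalgebra (via the comultiplication of the $f_i^{(a)}$, by an easy induction on $a$). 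That is the bootstrap that makes (ii) follow from (i); your phrase ``removes the asymmetry between the two tensor factors'' gestures at this but the actual cocommutativity-on-integrable-modules computation, which is precisely where $\zeta_i^2=1$ enters, is missing, and the decomposition you chose cannot be made to work without it.
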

For $w\in W$  we define an $R$-homomorphism 
$
T_w|_V=T_w:V\to V
$
by 
${T}_w=
{T}_{i_1}\dots
{T}_{i_N}$,
where $w=s_{i_1}\dots s_{i_N}$ is a reduced expression of $w$.
It does not depend on the choice of a reduced expression.
Moreover, 
we have
$
{T}_wV_\lambda=V_{w\lambda}$ for $\lambda\in X$.
By \cite{Lbook} there exists an automorphism $T_w$ of the $R$-algebra $U_{R,\zeta}(\Gg)$ 
satisfying 
\begin{equation}
\label{eq:Tw}
{T}_wuv=
({T}_wu)({T}_wv)
\qquad
(u\in{U}_{R,\zeta}(\Gg), v\in V).
\end{equation}
\begin{lemma}
For a 
$U_{R,\zeta}(\Gg)$-module $V$
the following conditions are equivalent.
\begin{itemize}
\item[(a)]
$V$ has the weight decomposition
$V=\bigoplus_{\mu\in X}V_\mu$, and 
for any $v\in V$ the $U_{R,\zeta}(\Gg)$-submodule $U_{R,\zeta}(\Gg)v$ of $V$ is a finitely generated $R$-module.
\item[(b)]
$V$ is an integrable  $U_{R,\zeta}(\Gg)$-module.
\end{itemize}
\end{lemma}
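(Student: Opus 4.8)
The plan is to observe first that conditions (a) and (b) both incorporate the weight decomposition $V=\bigoplus_{\mu\in X}V_\mu$, so that the real content is the equivalence, for a module carrying such a decomposition, between finite generation over $R$ of every cyclic submodule $U_{R,\zeta}(\Gg)v$ and the local finiteness of the divided powers $e_i^{(n)},f_i^{(n)}$. For (a)$\Rightarrow$(b) it suffices to treat a weight vector $v\in V_\lambda$, since integrability is checked on weight vectors. Then $M:=U_{R,\zeta}(\Gg)v$ is $X$-graded, $M=\bigoplus_\mu (M\cap V_\mu)$, because $U_{R,\zeta}(\Gg)$ is $Q$-graded via the triangular decomposition and $v$ is homogeneous; by (a), $M$ is a finitely generated $R$-module, and since each of finitely many generators has finite support in the grading, only finitely many homogeneous components $M\cap V_\mu$ are nonzero. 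As $e_i^{(n)}v\in M\cap V_{\lambda+n\alpha_i}$ and $f_i^{(n)}v\in M\cap V_{\lambda-n\alpha_i}$, these vanish once $n$ is large, which is integrability; the case of a general $v$ follows by writing it as a finite sum of weight vectors.

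For (b)$\Rightarrow$(a) the plan has two parts. First, a reduction to weight vectors: for any finite $F\subset X$ and any $\mu\in F$ there is $h_\mu\in U_{R,\zeta}(\Gh)$ with $\chi_\nu(h_\mu)=\delta_{\mu\nu}1_R$ for all $\nu\in F$. I would prove this by reducing, via base change from $\BA$ to $R$, to $R=\BA$, choosing $y\in Y$ and $m\in\BZ$ so that the integers $\ell_\mu:=\langle\mu,y\rangle+m$ $(\mu\in F)$ are distinct and $\geqq 0$, and using that $\chi_\mu$ carries $\begin{Bmatrix}q^{m}k_y\\ n\end{Bmatrix}_q\in U_\BA(\Gh)$ to the Gaussian binomial coefficient $\begin{bmatrix}\ell_\mu\\ n\end{bmatrix}_q$, which is $0$ for $n>\ell_\mu$ and $1$ for $n=\ell_\mu$; a downward induction on $n$ then builds the $h_\mu$ as $\BA$-linear combinations of these. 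Consequently, for $v=\sum_{\mu\in F}v_\mu\in V$ with $v_\mu\in V_\mu$ one has $v_\mu=h_\mu v\in U_{R,\zeta}(\Gh)v\subset U_{R,\zeta}(\Gg)v$, hence $U_{R,\zeta}(\Gg)v=\sum_{\mu\in F}U_{R,\zeta}(\Gg)v_\mu$, and it is enough to prove finite generation for weight vectors. Second, for $v\in V_\lambda$ I would fix a reduced expression $w_0=s_{i_1}\cdots s_{i_N}$ and invoke Lusztig's PBW $R$-basis of $U_{R,\zeta}(\Gn^+)$, whose elements are the ordered monomials $E_{\beta_1}^{(c_1)}\cdots E_{\beta_N}^{(c_N)}$ with $E_{\beta_j}^{(c)}=T_{w'}(e_{i_j}^{(c)})$, $w'=s_{i_1}\cdots s_{i_{j-1}}$. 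By \eqref{eq:Tw} one has $E_{\beta_j}^{(c)}u=T_{w'}\bigl(e_{i_j}^{(c)}(T_{w'}^{-1}u)\bigr)$ for any $u\in V$, which vanishes for $c\gg0$ because $V$ is integrable; iterating over $j=N,N-1,\dots,1$ through the finitely many nonzero vectors produced at each stage yields bounds $C_j$ with $E_{\beta_1}^{(c_1)}\cdots E_{\beta_N}^{(c_N)}v=0$ unless $c_j<C_j$ for all $j$. Thus $U_{R,\zeta}(\Gn^+)v$ is finitely generated over $R$ and spanned by weight vectors; applying the symmetric statement for $U_{R,\zeta}(\Gn)$ (with the $F$-root vectors) to each of those, and using $U_{R,\zeta}(\Gg)v=U_{R,\zeta}(\Gn)\,U_{R,\zeta}(\Gh)\,U_{R,\zeta}(\Gn^+)v$ together with the fact that $U_{R,\zeta}(\Gh)$ acts by a character on each weight vector, exhibits $U_{R,\zeta}(\Gg)v$ as a finite sum of finitely generated $R$-modules.

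The main obstacle is the direction (b)$\Rightarrow$(a), and within it the step of upgrading the tautological local nilpotency of the simple divided powers $e_i^{(n)},f_i^{(n)}$ to that of all PBW root vectors; this I handle by conjugating with the braid operators $T_{w'}$ through \eqref{eq:Tw} and invoking Lusztig's PBW basis of $U_{R,\zeta}(\Gn)$ and $U_{R,\zeta}(\Gn^+)$. The separation property for $\{\chi_\mu\}$ on a finite set of weights is only a technical device needed to reduce non-homogeneous vectors to weight vectors, and the remaining steps are routine bookkeeping with the triangular decomposition.
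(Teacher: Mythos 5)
Your proposal is correct and follows the same route as the paper: for (a)$\Rightarrow$(b) one reads off integrability from the $Q$-grading of a finitely generated cyclic submodule, and for (b)$\Rightarrow$(a) one conjugates the simple divided powers by braid operators via \eqref{eq:Tw} and feeds the resulting local nilpotency of the PBW root vectors into Lusztig's PBW basis. The one place where you are more careful than the paper is the reduction of (b)$\Rightarrow$(a) to weight vectors: you construct explicit separating elements $h_\mu\in U_{R,\zeta}(\Gh)$ with $\chi_\nu(h_\mu)=\delta_{\mu\nu}$, so that $U_{R,\zeta}(\Gg)v=\sum_\mu U_{R,\zeta}(\Gg)v_\mu$ on the nose. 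This is genuinely needed when $R$ is not Noetherian, since the containment $U_{R,\zeta}(\Gg)v\subset\sum_\mu U_{R,\zeta}(\Gg)v_\mu$ alone only exhibits $U_{R,\zeta}(\Gg)v$ as a submodule of a finitely generated module, and the mere $R$-linear independence of $\{\chi_\lambda\}$ from Lemma~\ref{lem:cr}(i) does not by itself yield separating idempotent-like elements over a general base ring; your Gaussian-binomial downward induction over $\BA$ followed by base change supplies exactly the missing construction.
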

\begin{proof}
The indication (a)$\Rightarrow$(b) is clear from
$e_i^{(n)}V_\lambda\subset V_{\lambda+n\alpha_i}$ and
$f_i^{(n)}V_\lambda\subset V_{\lambda-n\alpha_i}$.
Assume (b) holds.
For any $v\in V$, $w\in W$, $i\in I$ we have
$
(T_we_i^{(n)})v=(T_wf_i^{(n)})v=0
$
for $n\gg0$ by \eqref{eq:Tw}.
Hence we can deduce (a) using the PBW-type basis for $U_{R,\zeta}(\Gg)$ (see \cite{Lroots}).
\end{proof}

For $\lambda\in X^+$ we define left $U_{R,\zeta}(\Gg)$-modules 
$\Delta_{R,\zeta}(\lambda)$, $\nabla_{R,\zeta}(\lambda)$ 
and right $U_{R,\zeta}(\Gg)$-modules 
$\Delta^*_{R,\zeta}(\lambda)$, $\nabla^*_{R,\zeta}(\lambda)$ 
as the base changes of 
$\Delta_\BA(\lambda)$, $\nabla_\BA(\lambda)$,  
$\Delta^*_\BA(\lambda)$, $\nabla^*_\BA(\lambda)$
respectively.
We call 
$\Delta_{R,\zeta}(\lambda)$, $\Delta^*_{R,\zeta}(\lambda)$ 
the (left and right) Weyl modules with highest weight $\lambda$, and
$\nabla_{R,\zeta}(\lambda)$, $\nabla^*_{R,\zeta}(\lambda)$
the (left and right) dual Weyl modules with highest weight $\lambda$.

\subsection{}
In this subsection we assume  $\zeta_\alpha=\pm1$ for any $\alpha\in\Delta$.
Take a subset $J$ of $I$ as in \eqref{eq:J}, and identify $\overline{U}_R(\Gg)$ with a subalgebra of $U_{R,\zeta}(\Gg)$ via Proposition \ref{prop:pm1}.
Since $U_{R,\zeta}(\Gg)$ is generated by $\overline{U}_R(\Gg)$ and  $U_{R,\zeta}(\Gh)$, we see easily the following.
\begin{proposition}
\label{prop:rep-pm1a}
The embedding 
$\overline{U}_R(\Gg)\subset U_{R,\zeta}(\Gg)$
induces the equivalence
\[
\Mod_\inte(U_{R,\zeta}(\Gg))
\cong
\Mod_\inte(\overline{U}_R(\Gg))
\]
of abelian categories.
\end{proposition}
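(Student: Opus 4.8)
The plan is to produce a quasi-inverse to the restriction functor
$\Res\colon\Mod_\inte(U_{R,\zeta}(\Gg))\to\Mod_\inte(\overline{U}_R(\Gg))$
attached to the inclusion $\overline{U}_R(\Gg)\subset U_{R,\zeta}(\Gg)$ of Proposition \ref{prop:pm1}. First I would check that $\Res$ is well defined. Let $V=\bigoplus_{\mu\in X}V_\mu$ be an integrable $U_{R,\zeta}(\Gg)$-module. Under the identification $\binom{y}{n}\leftrightarrow h(y,n)$, the element $\binom{y}{n}\in\overline{U}_R(\Gh)$ is $h(y,n)\in U_{R,\zeta}(\Gh)$, which acts on $V_\mu$ by $\chi_\mu(h(y,n))=\binom{\langle\mu,y\rangle}{n}\,1_R$; hence $V=\bigoplus_\mu V_\mu$ is also the weight decomposition of $V$ regarded as an $\overline{U}_R(\Gh)$-module, by Lemma \ref{lem:cr}. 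Since $\overline{e}_i^{(n)}=e(i,n)$ and $\overline{f}_i^{(n)}=f(i,n)$ differ from $e_i^{(n)}$, $f_i^{(n)}$ only by a power of $\zeta_i$ and a power of the operator $k_i$, which is invertible on each $V_\mu$, the local finiteness condition for the $\overline{U}_R(\Gg)$-action is equivalent to that for the $U_{R,\zeta}(\Gg)$-action; thus $\Res(V)$ is integrable.

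Next I would construct the inverse functor $E$. Given an integrable $\overline{U}_R(\Gg)$-module $V=\bigoplus_{\mu\in X}V_\mu$, let each $h\in U_{R,\zeta}(\Gh)$ act on $V_\mu$ by the scalar $\chi_\mu(h)$; on $U'_{R,\zeta}(\Gh)$ this recovers the given action, as $h(y,n)=\binom{y}{n}$ acts by $\binom{\langle\mu,y\rangle}{n}\,1_R$ either way. Since $U_{R,\zeta}(\Gg)$ is generated by $\overline{U}_R(\Gg)$ and $U_{R,\zeta}(\Gh)$, it remains to check that the relations of $U_{R,\zeta}(\Gg)$ linking the two subalgebras are respected. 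By the triangular decompositions $U_{R,\zeta}(\Gg)\cong U_{R,\zeta}(\Gn)\otimes U_{R,\zeta}(\Gh)\otimes U_{R,\zeta}(\Gn^+)$ and \eqref{eq:triUprime}, together with \eqref{eq:UU-prime} --- which identify $U_{R,\zeta}(\Gg)$ with $\overline{U}_R(\Gn)\otimes U_{R,\zeta}(\Gh)\otimes\overline{U}_R(\Gn^+)$ as an $R$-module --- these relations are generated by the commutation relations \eqref{eq:cc1}, \eqref{eq:cc2}, \eqref{eq:cc3} inside $U'_{R,\zeta}(\Gg)$ and the internal relations of $U_{R,\zeta}(\Gh)$ and of $\overline{U}_R(\Gg)$; on the weight module $V$ each of these is an identity of operators involving only the scalars $\chi_\mu(h)$ and the already-given $\overline{U}_R(\Gg)$-action, hence holds. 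The module $E(V)$ is integrable over $U_{R,\zeta}(\Gg)$ by the scalar comparison of the first paragraph read in reverse. Functoriality of $E$ is automatic, since a $\overline{U}_R(\Gg)$-linear map between integrable modules preserves $\overline{U}_R(\Gh)$-weight spaces and therefore commutes with the prescribed $U_{R,\zeta}(\Gh)$-action.

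Finally one observes that $\Res\circ E$ is the identity on the nose, while $E\circ\Res$ reinstates on each $V_\mu$ exactly the $\chi_\mu$-action that $U_{R,\zeta}(\Gh)$ already carried. I expect the one step requiring genuine care --- the main, if modest, obstacle --- to be the well-definedness of $E$, i.e.\ the verification that the prescribed $U_{R,\zeta}(\Gh)$-action is compatible with the $\overline{U}_R(\Gg)$-action across the relations connecting $U_{R,\zeta}(\Gh)$ to $\overline{U}_R(\Gn)$ and $\overline{U}_R(\Gn^+)$; this is exactly where \eqref{eq:cc1}--\eqref{eq:cc3} and the PBW-type triangular decomposition of $U_{R,\zeta}(\Gg)$ enter.
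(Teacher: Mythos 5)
Your proposal correctly identifies the key structural fact that drives the result — that $U_{R,\zeta}(\Gg)$ is generated by $\overline{U}_R(\Gg)$ and $U_{R,\zeta}(\Gh)$ — which is exactly what the paper's one-line proof invokes; your explicit construction of the quasi-inverse $E$ (prescribing the $U_{R,\zeta}(\Gh)$-action by $\chi_\mu$ on each weight space), together with the checks that weight decompositions and integrability conditions match on both sides, is a sound and correct elaboration of the ``we see easily'' that the paper leaves implicit.
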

\begin{lemma}
\label{lem:oTT}
For $w\in W$ and $\lambda\in X$ there exists $\epsilon_{w,\lambda}\in\{\pm1\}$ such that 
for any integrable $U_{R,\zeta}(\Gg)$-module $V$ we have
\[
\overline{T}_wv=\epsilon_{w,\lambda}T_wv
\qquad(v\in V_\lambda).
\]
Here, $\overline{T}_w$ $($resp.\ 
$T_w$$)$
is defined as an operator on the integrable  $\overline{U}_{R}(\Gg)$-module 
$($resp.\
$U_{R,\zeta}(\Gg)$-module$)$ V.
\end{lemma}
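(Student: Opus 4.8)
The plan is to compare the two operators $\overline{T}_w$ and $T_w$ generator-by-generator, i.e.\ first treat the case $w = s_i$ and then multiply up along a reduced expression. By Proposition \ref{prop:rep-pm1a} an integrable $U_{R,\zeta}(\Gg)$-module is the same thing as an integrable $\overline{U}_R(\Gg)$-module, so on a weight vector $v \in V_\lambda$ both $\overline{T}_i v$ and $T_i v$ are expressed through the divided powers $\overline{f}_i^{(a)}, \overline{e}_i^{(b)}, \overline{f}_i^{(c)}$ (via the identifications of Proposition \ref{prop:pm1}). The only discrepancy between the defining formula for $\overline{T}_i$ and the formula \eqref{eq:Ti} for $T_i$, once everything is rewritten in terms of the barred generators, is the scalar $\zeta_i^{b-ac}$ appearing in \eqref{eq:Ti} together with the extra factors of $k_i$ hidden in $f(i,n), e(i,n)$ when $i \notin J$. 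The first step is therefore to show that on $V_\lambda$ this scalar is constant over the index set $\{(a,b,c) \mid a-b+c = \langle\lambda,\alpha_i^\vee\rangle\}$, up to an overall sign depending only on $i$ and $\lambda$; since $a-b+c$ is fixed and $\zeta_i^2 = 1$, the exponent $b - ac \pmod 2$ is determined by $a+c \pmod 2$ and by the parity of $\langle\lambda,\alpha_i^\vee\rangle$, which pins it down modulo $2$ up to a global choice. This gives $\epsilon_{s_i,\lambda} \in \{\pm1\}$ with $\overline{T}_i v = \epsilon_{s_i,\lambda} T_i v$ for all $v \in V_\lambda$.

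Next I would propagate this to general $w$. Fix a reduced expression $w = s_{i_1}\cdots s_{i_N}$ and write $\lambda_0 = \lambda$, $\lambda_t = s_{i_t}\cdots s_{i_N}\lambda$ for the intermediate weights, so that $T_{i_t}$ maps $V_{\lambda_t}$ into $V_{\lambda_{t-1}}$ and likewise for $\overline{T}_{i_t}$. Applying the $s_i$-case at each stage yields
\[
\overline{T}_w v = \left(\prod_{t=1}^{N} \epsilon_{s_{i_t},\lambda_t}\right) T_w v
\qquad (v \in V_\lambda),
\]
so one sets $\epsilon_{w,\lambda}$ equal to this product of signs. It remains to check that $\epsilon_{w,\lambda}$ so defined is independent of the chosen reduced expression: this follows because $\overline{T}_w$ and $T_w$ are each independent of the reduced expression (as recorded in the text), so the ratio on any fixed nonzero weight vector is forced to agree; alternatively one verifies the braid relations for the sign cocycle directly. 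One must also confirm that $\epsilon_{w,\lambda}$ does not depend on $V$, which is immediate since the scalars $\zeta_i^{b-ac}$ and the parities of the weights involved are intrinsic to $R$, $\zeta$, $w$, $\lambda$ and not to $V$; concretely one can test on the Weyl module $\Delta_{R,\zeta}(\mu)$ for suitable $\mu$, or simply observe that the weight-vector computation above never used anything about $V$.

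The main obstacle I anticipate is bookkeeping the factors of $k_i$ introduced by the normalization $e(i,n) = \zeta_i^{n(n+1)/2} e_i^{(n)} k_i^n$, $f(i,n) = \zeta_i^{n(n+1)/2} f_i^{(n)} k_i^n$ for $i \notin J$ in Proposition \ref{prop:pm1}, since these $k_i$'s act on weight vectors by signs $\chi_{\langle\cdot,\alpha_i^\vee\rangle}(k_i) = \zeta_i^{\langle\cdot,\alpha_i^\vee\rangle}$ that shift as one moves through the sum $\sum_{a-b+c=\langle\lambda,\alpha_i^\vee\rangle}$. The point to be careful about is that when $\zeta_i = 1$ we have $k_i = 1$ in $U_{R,\zeta}(\Gh)$ by Lemma \ref{lem:cr}, so these factors are harmless, whereas when $\zeta_i = -1$ they contribute genuine signs that must be absorbed into $\epsilon_{s_i,\lambda}$; tracking their cumulative effect and verifying it is again constant over the summation index is the one computation that needs real care, but it is a finite check in the rank-one subalgebra generated by $e_i, f_i, k_i$ and reduces, as elsewhere in this section, to the $\Gsl_2$ case.
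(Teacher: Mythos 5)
Your high-level strategy is the same as the paper's: reduce to $w=s_i$, rewrite $T_i$ on $V_\lambda$ in terms of $\overline{e}_i^{(n)},\overline{f}_i^{(n)}$ via the identifications of Proposition~\ref{prop:pm1}, and show the ratio $T_i v/\overline{T}_i v$ is a fixed power of $\zeta_i$ depending only on $\lambda$, then compose along a reduced word. However, the rank-one computation — the one step you flag as needing "real care" — contains a genuine error in the part you do spell out.

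The claim that the exponent $b-ac$ is determined $\bmod\,2$ by $a+c\bmod 2$ and the parity of $n_0:=\langle\lambda,\alpha_i^\vee\rangle$ is false. Substituting $b=a+c-n_0$ gives $b-ac\equiv a+c+ac+n_0\pmod 2$, and $a+c+ac\bmod 2$ equals $0$ if $a$ and $c$ are both even and $1$ otherwise; it is \emph{not} a function of $a+c\bmod 2$. Thus $\zeta_i^{b-ac}$ is not constant over the index set $\{(a,b,c):a-b+c=n_0\}$ when $\zeta_i=-1$ (for instance $(a,b,c)=(1,1,0)$ and $(2,2,0)$ with $n_0=0$ give opposite signs), and your "first step" as stated cannot succeed. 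The point is that no individual piece of the coefficient is constant: neither $\zeta_i^{b-ac}$, nor the $k_i$-contributions, nor the normalization factors $\zeta_i^{n(n\pm1)/2}$ appearing in $e(i,n)$ and $f(i,n)$ (the latter you do not mention at all, yet they are essential). Only the \emph{total} exponent
\[
b-ac+\tfrac{a(a+1)}{2}+\tfrac{b(b-1)}{2}+\tfrac{c(c+1)}{2}-(a+c)\,n_0
\]
(say for $i\in J$; similarly with shifted conventions for $i\notin J$) is constant modulo $2$ once $b=a+c-n_0$ is imposed, and it equals $n_0(n_0-1)/2$ for $i\in J$ and $n_0(n_0+1)/2$ for $i\notin J$. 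This quadratic identity is precisely what the paper's proof produces, and it is the content your argument is missing. You also misstate where the $k_i$ factors come from: they occur in $f(i,n)$ when $i\in J$ and in $e(i,n)$ when $i\notin J$, not only "when $i\notin J$". Finally, the index bookkeeping in your passage to general $w$ is off by one ($T_{i_t}$ sends $V_{\lambda_{t+1}}$ to $V_{\lambda_t}$, not $V_{\lambda_t}$ to $V_{\lambda_{t-1}}$ with your conventions), though this is cosmetic and the composition step is otherwise fine.
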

\begin{proof}
We may assume $w=s_i$ for $i\in I$.
Recall
\[
T_iv=
\sum_{a-b+c=\langle\lambda,\alpha_i^\vee\rangle}
(-1)^b
\zeta_i^{b-ac}f_i^{(a)}e_i^{(b)}f_i^{(c)}v.
\]
For $i\in J$ we have
\[
e_i^{(n)}=\zeta_i^{n(n-1)/2}\overline{e}_i^{(n)},
\qquad
f_i^{(n)}=\zeta_i^{n(n+1)/2}\overline{f}_i^{(n)}k_i^{-n}
\qquad(n\in \BZ_{\geqq0}), 
\]
and hence
\begin{align*}
T_iv=
\zeta_i^{\langle\lambda,\alpha_i^\vee\rangle(\langle\lambda,\alpha_i^\vee\rangle-1)/2}
\sum_{a-b+c=\langle\lambda,\alpha_i^\vee\rangle}
(-1)^b
\overline{f}_i^{(a)}
\overline{e}_i^{(b)}
\overline{f}_i^{(c)}v
=
\zeta_i^{\langle\lambda,\alpha_i^\vee\rangle(\langle\lambda,\alpha_i^\vee\rangle-1)/2}
\overline{T_i}v.
\end{align*}
For $i\notin J$ we have
\[
e_i^{(n)}=\zeta_i^{n(n+1)/2}\overline{e}_i^{(n)}k_i^{-n},
\qquad
f_i^{(n)}=\zeta_i^{n(n-1)/2}\overline{f}_i^{(n)}
\qquad(n\in \BZ_{\geqq0}),
\] 
and hence
\begin{align*}
T_iv=
\zeta_i^{\langle\lambda,\alpha_i^\vee\rangle(\langle\lambda,\alpha_i^\vee\rangle+1)/2}
\sum_{a-b+c=\langle\lambda,\alpha_i^\vee\rangle}
(-1)^b
\overline{f}_i^{(a)}
\overline{e}_i^{(b)}
\overline{f}_i^{(c)}v
=
\zeta_i^{\langle\lambda,\alpha_i^\vee\rangle(\langle\lambda,\alpha_i^\vee\rangle+1)/2}
\overline{T_i}v.
\end{align*}
\end{proof}
\begin{proposition}
\label{prop:rep-pm1}
As left or right $\overline{U}_R(\Gg)$-modules we have
\[
\Delta_{R,\zeta}(\lambda)\cong
\overline{\Delta}_R(\lambda),
\quad
\Delta^*_{R,\zeta}(\lambda)\cong
\overline{\Delta}^*_R(\lambda),
\quad
\nabla_{R,\zeta}(\lambda)\cong
\overline{\nabla}_R(\lambda),
\quad
\nabla^*_{R,\zeta}(\lambda)\cong
\overline{\nabla}^*_R(\lambda)
\]
for $\lambda\in X^+$.
\end{proposition}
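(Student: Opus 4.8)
The plan is to reduce the statement to the level of the $\BA$-forms and then exploit the explicit change of generators in Proposition \ref{prop:pm1} together with the comparison of braid operators in Lemma \ref{lem:oTT}. First I would recall that $\Delta_{R,\zeta}(\lambda)=R\otimes_\BA\Delta_\BA(\lambda)$ with $\Delta_\BA(\lambda)=U_\BA(\Gg)v_\lambda\subset V_\BF(\lambda)$, and similarly for the starred and nabla versions. Hence it suffices to produce $\overline{U}_R(\Gg)$-module isomorphisms after base change; via Proposition \ref{prop:rep-pm1a} the categories $\Mod_\inte(U_{R,\zeta}(\Gg))$ and $\Mod_\inte(\overline{U}_R(\Gg))$ are identified, so the only real content is to match up the four distinguished modules with their classical counterparts $\overline{\Delta}_R(\lambda)$ etc.

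The key step is the $\lambda\in X^+$ case of a generation statement: I claim that under the identification $\overline{U}_R(\Gg)\subset U_{R,\zeta}(\Gg)$ of Proposition \ref{prop:pm1}, we have $U_\BA(\Gg)v_\lambda = \overline{U}_{R}(\Gg)\,v_\lambda$ inside the appropriate module after base change; more precisely one should show that the Weyl module $\Delta_{R,\zeta}(\lambda)$ is generated over $\overline{U}_R(\Gg)$ by its highest weight vector and that the defining relations match those of $\overline{\Delta}_R(\lambda)$. Concretely, the image of $v_\lambda$ satisfies $h(y,n)v_\lambda=\binom{\langle\lambda,y\rangle}{n}v_\lambda$ for all $y\in Y$, $n\in\BZ_{\geqq0}$ (since $\chi_\lambda(h(y,n))=\binom{\langle\lambda,y\rangle}{n}1_R$), and $e(i,n)v_\lambda=0$, and $f(i,n)v_\lambda=0$ for $n>\langle\lambda,\alpha_i^\vee\rangle$ when $i\in J$ (respectively the analogous statement with the $k_i$-twist absorbed when $i\notin J$, using that $k_i$ acts by a scalar on a weight vector). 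These are exactly the defining relations of $\overline{\Delta}_R(\lambda)$, which gives a surjection $\overline{\Delta}_R(\lambda)\twoheadrightarrow\Delta_{R,\zeta}(\lambda)$ of $\overline{U}_R(\Gg)$-modules. To see it is an isomorphism one compares weight spaces: both sides have the same character (the Weyl character of $\lambda$) since $\Delta_\BA(\lambda)_\mu$ and $\overline{\Delta}_\BZ(\lambda)_\mu$ are free of the same rank over $\BA$ resp.\ $\BZ$ by the cited canonical basis results, so the surjection between free $R$-modules of equal finite rank in each weight is an isomorphism.

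For the dual Weyl modules $\nabla_{R,\zeta}(\lambda)$ and $\nabla^*_{R,\zeta}(\lambda)$ I would argue by duality: $\nabla_\BA(\lambda)_\mu=\Hom_\BA(\Delta^*_\BA(\lambda)_\mu,\BA)$ and the analogous classical identity $\overline{\nabla}_\BZ(\lambda)=\Hom_\BZ(\overline{\Delta}^*_\BZ(\lambda),\BZ)$, so once the Weyl-module isomorphisms $\Delta^*_{R,\zeta}(\lambda)\cong\overline{\Delta}^*_R(\lambda)$ are in hand, applying $\Hom_R(-,R)$ weight-space by weight-space and using compatibility of dualization with base change (everything is finite free) yields the nabla isomorphisms. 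The right-module versions $\Delta^*_{R,\zeta}(\lambda)\cong\overline{\Delta}^*_R(\lambda)$ are handled symmetrically, starting from the highest weight covector $v^*_\lambda$ and the relations it satisfies under right multiplication by $h(y,n)$, $e(i,n)$, $f(i,n)$.

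The main obstacle is the precise bookkeeping of the scalar twists by powers of $\zeta_i$ and by $k_i^{\pm n}$ in the formulas for $e(i,n)$, $f(i,n)$: since $k_i$ acts on a weight vector of weight $\mu$ by $\zeta_i^{\langle\mu,\alpha_i^\vee\rangle}=\pm1$, one must check that the twisted generators still annihilate $v_\lambda$ in the right degrees and that the surjection $\overline{\Delta}_R(\lambda)\twoheadrightarrow\Delta_{R,\zeta}(\lambda)$ really is well defined; this is where the identity $\sum_{m=0}^n\BZ\binom{x}{m}\ni\binom{x+s}{n}$ and the relation $t(i,n,s)\in\sum_m Rh(\alpha_i^\vee,m)$ recorded before Lemma \ref{lem:u-prime} get used, together with Lemma \ref{lem:oTT} to reconcile the $W$-action needed to move between weight spaces. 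Apart from that, the argument is a routine ``surjection between equal-rank free modules'' dimension count, and I would relegate the verification of the Serre-type and commutation relations on highest weight vectors to the reader exactly as the excerpt does elsewhere.
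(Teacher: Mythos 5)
Your overall outline (reduce to $\Delta$, argue $\Delta^*$ symmetrically, then get $\nabla$ and $\nabla^*$ by duality since all weight spaces are finite free and dualization commutes with base change) matches the paper's. The problem is in the core step: you assert that $h(y,n)v_\lambda=\binom{\langle\lambda,y\rangle}{n}v_\lambda$, $e(i,n)v_\lambda=0$, $f(i,n)v_\lambda=0$ for $n>\langle\lambda,\alpha_i^\vee\rangle$ ``are exactly the defining relations of $\overline{\Delta}_R(\lambda)$,'' and thereby obtain a surjection $\overline{\Delta}_R(\lambda)\twoheadrightarrow\Delta_{R,\zeta}(\lambda)$. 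But $\overline{\Delta}_R(\lambda)$ is not defined by a presentation: it is defined as $R\otimes_\BZ\overline{\Delta}_\BZ(\lambda)$, where $\overline{\Delta}_\BZ(\lambda)=\overline{U}_\BZ(\Gg)\overline{v}_\lambda$ is a $\BZ$-lattice inside the characteristic-zero irreducible $\overline{V}(\lambda)$. Over a general commutative ring $R$ (e.g.\ a field of positive characteristic) there is no a priori reason for $\overline{\Delta}_R(\lambda)$ to be the universal $\overline{U}_R(\Gg)$-module on a vector satisfying those relations; proving such a presentation exists is itself a nontrivial theorem which is nowhere invoked or established in this paper. So the map you want is not obviously well defined, and the argument does not close.

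The paper sidesteps this exactly because it never tries to present the Weyl module over a general $R$. It first reduces (after rescaling the bilinear form so that $\zeta=\pm1$) to $R=\BZ$, then passes to $\BQ$. Over $\BQ$ the module $\overline{\Delta}_\BQ(\lambda)$ \emph{is} presented by the highest-weight and $\overline{f}_i^{(\langle\lambda,\alpha_i^\vee\rangle+1)}$ relations, because it coincides with the irreducible $\overline{V}(\lambda)$; this gives a surjection $\overline{\Delta}_\BQ(\lambda)\to\Delta_{\BQ,\pm1}(\lambda)$, which is an isomorphism by a dimension count. One then restricts along the embeddings $\overline{\Delta}_\BZ(\lambda)\subset\overline{\Delta}_\BQ(\lambda)$ and $\Delta_{\BZ,\pm1}(\lambda)\subset\Delta_{\BQ,\pm1}(\lambda)$: since both $\BZ$-lattices are exactly $\overline{U}_\BZ(\Gg)$ times the respective highest-weight vector (here Proposition \ref{prop:pm1} is used to see $\Delta_{\BZ,\pm1}(\lambda)=\overline{U}_\BZ(\Gg)v_\lambda$), the $\BQ$-isomorphism carries one lattice onto the other, giving the isomorphism over $\BZ$ and hence over any $R$ by base change. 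Your later weight-space rank comparison is fine as far as it goes, but without the characteristic-zero detour the initial surjection you rely on has not been constructed. Also, Lemma \ref{lem:oTT} plays no role in this particular proposition.
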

\begin{proof}
By duality we have only to show the first two isomorphisms.
The proofs being similar, we only verify the first one.
We may assume $\Gg$ is simple.
Then as in the proof of Proposition \ref{prop:pm1} we may assume $q=q_\alpha$ for short roots $\alpha$, and $R=\BZ$.
Using the embeddings
\[
\Delta_{\BZ,\pm1}(\lambda)
\subset
\Delta_{\BQ,\pm1}(\lambda),
\qquad
\overline{\Delta}_\BZ(\lambda)
\subset
\overline{\Delta}_\BQ(\lambda)
\]
the proof is reduced to showing
$\Delta_{\BQ,\pm1}(\lambda)\cong
\overline{\Delta}_\BQ(\lambda)$.
By
\[
\overline{\Delta}_\BQ(\lambda)
\cong
\overline{U}_\BQ(\Gg)/
\left(
\overline{U}_\BQ(\Gg)\Gn^+
+
\sum_{h\in\overline{U}_\BQ(\Gh)}
\overline{U}_\BQ(\Gg)(h-\overline{\chi}_\lambda(h))
+
\sum_{i\in I}
\overline{U}_\BQ(\Gg)
\overline{f}_i^
{(\langle\lambda,\alpha_i^\vee\rangle+1)}
\right)
\]
we can check that we have a surjective homomorphism 
$
\overline{\Delta}_\BQ(\lambda)
\to
\Delta_{\BQ,\pm1}(\lambda)
$
given by 
$\overline{v}_\lambda\mapsto v_\lambda$.
We conclude that this is actually an isomorphism considering the dimensions.
\end{proof}

\subsection{}
Let $R$ be a commutative ring equipped with $\zeta\in R^\times$.
In this subsection we assume that 
there exists some $\ell\in\BZ_{>0}$ such that $f_\ell(\zeta)=0$, where $f_\ell$ is the $\ell$-th cyclotomic polynomial.
Recall that we have the quantum Frobenius homomorphism
$\CF:U_{R,\zeta}(\Gg)\to U_{R,\zeta}({}^\sharp \Gg)$.
Applying Proposition \ref{prop:pm1} to $U_{R,\zeta}({}^\sharp \Gg)$ we obtain
an embedding $\overline{U}_R({}^\sharp \Gg)\subset U_{R,\zeta}({}^\sharp \Gg)$ of $R$-algebras (depending on the choice of a subset $J$ of $I$).
To avoid the confusion
we denote the left and right Weyl modules over $\overline{U}_R({}^\sharp \Gg)$ with highest weight $\lambda\in{}^\sharp X$ by 
${}^\sharp\overline{\Delta}_R(\lambda)$,
${}^\sharp\overline{\Delta}^*_R(\lambda)$,
and 
the left and right dual Weyl modules over $\overline{U}_R({}^\sharp \Gg)$ with highest weight $\lambda\in{}^\sharp X$ by 
${}^\sharp\overline{\nabla}_R(\lambda)$,
${}^\sharp\overline{\nabla}^*_R(\lambda)$ respectively.
Similarly, 
we denote the left and right Weyl modules over ${U}_{R,\zeta}({}^\sharp \Gg)$ with highest weight $\lambda\in{}^\sharp X$ by 
${}^\sharp{\Delta}_{R,\zeta}(\lambda)$,
${}^\sharp{\Delta}^*_{R,\zeta}(\lambda)$,
and 
the left and right dual Weyl modules over ${U}_{R,\zeta}({}^\sharp \Gg)$ with highest weight $\lambda\in{}^\sharp X$ by 
${}^\sharp{\nabla}_{R,\zeta}(\lambda)$,
${}^\sharp{\nabla}^*_{R,\zeta}(\lambda)$ respectively.
As in Proposition \ref{prop:rep-pm1} we can identify the left or right  $\overline{U}_R({}^\sharp\Gg)$-modules
${}^\sharp\overline{\Delta}_R(\lambda)$,
${}^\sharp\overline{\Delta}^*_R(\lambda)$,
${}^\sharp\overline{\nabla}_R(\lambda)$, 
${}^\sharp\overline{\nabla}^*_R(\lambda)$
for $\lambda\in{}^\sharp X$
with the 
left or right  $U_{R,\zeta}({}^\sharp\Gg)$-modules
${}^\sharp{\Delta}_{R,\zeta}(\lambda)$,
${}^\sharp{\Delta}^*_{R,\zeta}(\lambda)$,
${}^\sharp{\nabla}_{R,\zeta}(\lambda)$, 
${}^\sharp{\nabla}^*_{R,\zeta}(\lambda)$
respectively.
Hence they can also be regarded as 
left or right  $U_{R,\zeta}(\Gg)$-modules via $\CF$.
Under this identification
we have homomorphisms 
\begin{align*}
&\Delta_{R,\zeta}(\lambda)
\twoheadrightarrow
{}^\sharp\overline{\Delta}_R(\lambda)
\to
{}^\sharp\overline{\nabla}_R(\lambda)
\hookrightarrow
\nabla_{R,\zeta}(\lambda),
\\
&\Delta^*_{R,\zeta}(\lambda)
\twoheadrightarrow
{}^\sharp\overline{\Delta}^*_R(\lambda)
\to
{}^\sharp\overline{\nabla}^*_R(\lambda)
\hookrightarrow
\nabla^*_{R,\zeta}(\lambda)
\end{align*}
of $U_{R,\zeta}(\Gg)$-modules for $\lambda\in{}^\sharp X$.
They induce isomorphisms
\begin{align}
\label{eq:extremalA}
&\Delta_{R,\zeta}(\lambda)_{w\lambda}
\cong
{}^\sharp\overline{\Delta}_R(\lambda)_{w\lambda}
\cong
{}^\sharp\overline{\nabla}_R(\lambda)_{w\lambda}
\cong
\nabla_{R,\zeta}(\lambda)_{w\lambda},
\\
\label{eq:extremalB}
&\Delta^*_{R,\zeta}(\lambda)_{w\lambda}
\cong
{}^\sharp\overline{\Delta}^*_R(\lambda)_{w\lambda}
\cong
{}^\sharp\overline{\nabla}^*_R(\lambda)_{w\lambda}
\cong
\nabla^*_{R,\zeta}(\lambda)_{w\lambda}
\end{align}
of $R$-modules for any $w\in W$.

\section{Duality for Hopf algebras}
\subsection{}
Let $\CT$ be a Hopf algebra over a field $K$, which is commutative and cocommutative.
Then the set $\Hom_\alg(\CT,K)$ of algebra homomorphisms from $\CT$ to $K$ is endowed with a structure of abelian group by
\[
(\varphi\psi)(t)=
(\varphi\otimes\psi)(\Delta(t))
\qquad
(\varphi, \psi\in\Hom_\alg(\CT,K)).
\]
For a subgroup $\Upsilon$ of $\Hom_\alg(\CT,K)$
we denote by $\Mod^f_\Upsilon(\CT)$ 
the category of finite-dimensional $\CT$-modules $M$ with the weight space decomposition 
\[
M=
\bigoplus_{\varphi\in\Upsilon}M_\varphi,
\qquad
M_\varphi=\{m\in M\mid tm=\varphi(t)m\;(t\in \CT)\}.
\]

Assume that we are given a Hopf algebra $\CH$, which contains $\CT$ as a Hopf subalgebra.
Note that the dual space $\CH^*=\Hom_K(\CH,K)$ is endowed with an $\CH$-bimodule structure by
\[
\langle h_1fh_2, h\rangle
=\langle f, h_2hh_1\rangle
\qquad(f\in \CH^*,\; h, h_1, h_2\in \CH).
\]
By a standard argument we have the following (see for example \cite{T0}).
\begin{proposition}
\label{prop:Hdual}
The following conditions on $f\in \CH^*$ are equivalent to each other.
\begin{itemize}
\item[(a)]
$\CH f\in\Mod^f_\Upsilon(\CT)$,
\item[(b)]
$f\CH\in\Mod^f_\Upsilon(\CT)$,
\item[(c)]
$\CH f\CH\in\Mod^f_{\Upsilon\times\Upsilon}(\CT\otimes\CT)$,
\item[(d)]
there exists a two-sided ideal $\CI$ of $\CH$ such that 
$\langle f,\CI\rangle=\{0\}$ and $\CH/\CI\in
\Mod^f_{\Upsilon\times\Upsilon}(\CT\otimes \CT)$.
\end{itemize}
\end{proposition}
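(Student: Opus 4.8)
The plan is to establish the cycle of equivalences $(a)\Rightarrow(c)\Rightarrow(d)\Rightarrow(a)$ together with the symmetric pair $(a)\Leftrightarrow(b)$, all of which follow from the standard finiteness/duality formalism for Hopf algebras once one keeps careful track of the weight-space conditions relative to $\CT$. The key conceptual point is that, for $f\in\CH^*$, the $\CH$-subbimodule $\CH f\CH$ is the smallest quotient of $\CH$ (as a bimodule) through which $f$ factors, and this quotient is finite-dimensional precisely when $f$ is a ``representative function'', i.e. when the matrix coefficient $f$ comes from a finite-dimensional $\CH$-bimodule; the role of $\Upsilon$ is simply to impose that the $\CT$-action (on the left and on the right) be semisimple with weights in $\Upsilon$.

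First I would prove $(a)\Leftrightarrow(b)$. Using the antipode $S$ of $\CH$, the map $f\mapsto f\circ S$ interchanges the left and right regular actions of $\CH$ on $\CH^*$, and since $\CT$ is a Hopf subalgebra $S$ restricts to the antipode of $\CT$; because $\CT$ is commutative and cocommutative, $S_\CT$ is an involutive algebra automorphism, so precomposition with $S$ sends $\Mod^f_\Upsilon(\CT)$ to itself (a weight $\varphi$ is carried to $\varphi\circ S_\CT=\varphi^{-1}$, which again lies in the subgroup $\Upsilon$). Hence $\CH f$ lies in $\Mod^f_\Upsilon(\CT)$ as a left $\CT$-module iff $fS\,\CH=(\CH\,(fS))\circ(\text{twist})$ does, which unwinds to condition $(b)$. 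Next, $(a)\Rightarrow(c)$: if $\CH f$ is finite-dimensional, then $f$ is a matrix coefficient of the finite-dimensional left $\CH$-module $V:=\CH f$, say $f=\xi\circ(v\mapsto \cdot v)$ for suitable $v\in V$, $\xi\in V^*$; then $\CH f\CH$ is a quotient of $V\otimes V^*$ as an $\CH$-bimodule (equivalently, a left $\CH\otimes\CH$-module via $h_1\otimes h_2\mapsto h_1(-)S(h_2)$ up to the standard identifications), hence finite-dimensional, and its $\CT\otimes\CT$-weights lie in $\Upsilon\times\Upsilon$ because those of $V$ and of $V^*$ do (here again one uses that $\Upsilon$ is a subgroup, closed under inversion, to handle $V^*$). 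The implication $(c)\Rightarrow(d)$ is essentially a tautology: take $\CI=\{h\in\CH\mid \langle f,\CH h\CH\rangle=0\}$, the annihilator of the bimodule $\CH f\CH$; it is a two-sided ideal, $\langle f,\CI\rangle=0$, and $\CH/\CI$ embeds into $\End_K(\CH f\CH)$-side data exhibiting it as a sub of a finite-dimensional $\CT\otimes\CT$-weight module, so it lies in $\Mod^f_{\Upsilon\times\Upsilon}(\CT\otimes\CT)$. Finally $(d)\Rightarrow(a)$: given such $\CI$, the functional $f$ factors through $\CH/\CI$, so $\CH f$ is a quotient of the left $\CH$-module $(\CH/\CI)^*$; restricting the $\CT$-action, $(\CH/\CI)^*$ is a finite-dimensional $\CT$-module whose weights, being the negatives of those of $\CH/\CI$, again lie in $\Upsilon$; hence $\CH f\in\Mod^f_\Upsilon(\CT)$.

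The step I expect to require the most care is the bookkeeping in $(a)\Rightarrow(c)$ and in the weight computation for duals: one must be precise about which of the four module structures on $\CH^*$ is in play, how the bimodule structure translates into a left $\CH\otimes\CH$-module structure (this is where the antipode re-enters, via $h\mapsto h_1\otimes S(h_2)$ applied through $\Delta$), and why passing to $K$-linear duals of finite-dimensional $\CT$-weight modules preserves membership in $\Mod^f_\Upsilon(\CT)$ — this is exactly the place where the hypothesis that $\Upsilon$ is a \emph{subgroup} (rather than just a subset) of $\Hom_\alg(\CT,K)$, together with cocommutativity of $\CT$ (so that $M^*$ is again a genuine $\CT$-module and its weights are the inverses of those of $M$), is used. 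None of this is deep; it is the ``standard argument'' alluded to in the statement, and I would present it compactly, citing \cite{T0} for the parts that are entirely routine and spelling out only the weight-tracking relative to $\CT$.
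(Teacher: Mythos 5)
The paper itself offers no proof here (it cites \cite{T0} for the ``standard argument''), so I can only assess your argument on its own terms. Your cycle $(a)\Rightarrow(c)\Rightarrow(d)\Rightarrow(a)$ is essentially sound, modulo two small inaccuracies: in $(d)\Rightarrow(a)$, $\CH f$ is a \emph{sub}module of $(\CH/\CI)^*$, not a quotient, and with the paper's convention $\langle m^*t,m\rangle=\langle m^*,tm\rangle$ the dual of a weight-$\varphi$ vector again has weight $\varphi$, not $\varphi^{-1}$ (no antipode is inserted). Both slips happen to be harmless because $\Upsilon$ is a group.

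The genuine gap is in your $(a)\Leftrightarrow(b)$ step. The map $\Psi(f)=f\circ S$ does \emph{not} carry $\CH f$ to $f\CH$; a direct computation gives $\Psi(hf)=\Psi(f)\,S^{-1}(h)$, hence $\Psi(\CH f)=\Psi(f)\CH=(f\circ S)\CH$. So your argument establishes that condition $(a)$ holds for $f$ if and only if condition $(b)$ holds for $f\circ S$, which is not the equivalence you need; the sentence ``which unwinds to condition $(b)$'' is where the error is hidden. Deducing ``$(a)$ for $f$ $\Leftrightarrow$ $(b)$ for $f$'' from this would require knowing \emph{a priori} that the set of $f$ satisfying $(a)$ is $S$-stable, which is circular (it is a consequence of the proposition, via the Hopf-algebra structure on $\CH^*_{\CT,\Upsilon}$). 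The repair is easy and you already have the tools: condition $(c)$ is manifestly left--right symmetric, and $f\CH\subset\CH f\CH$ is a right $\CT$-submodule, so $(c)\Rightarrow(b)$; conversely the argument you gave for $(a)\Rightarrow(c)$ dualizes verbatim to give $(b)\Rightarrow(c)$. Thus $(a)\Leftrightarrow(c)\Leftrightarrow(b)$ with no use of the antipode of $\CH$ (which, incidentally, the paper never assumes to be bijective in this section). Alternatively, for a direct proof of $(a)\Rightarrow(b)$: set $V=\CH f$ and $\xi\in V^*$ the ``evaluation at $1$''; then $\eta\mapsto(\,h\mapsto\eta(hf)\,)$ is a right $\CH$-module map $V^*\to\CH^*$ sending $\xi$ to $f$, so $f\CH$ is a quotient of $\xi\CH\subset V^*$, which is finite-dimensional with right $\CT$-weights in $\Upsilon$.
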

We denote by $\CH^*_{\CT,\Upsilon}$ the subspace of $\CH^*$ consisting of $f\in \CH^*$ satisfying the equivalent conditions of Proposition \ref{prop:Hdual}
.
Then $\CH^*_{\CT,\Upsilon}$ turns out to be a Hopf algebra whose multiplication, unit, comultiplication, counit, antipode are induced by the transpose of 
the comultiplication, the counit, the multiplication, the unit, the antipode respectively of $\CH$.

We denote by $\Mod^f_{\CT,\Upsilon}(\CH)$ the category of left $\CH$-modules which belong to $\Mod^f_\Upsilon(\CT)$ as a $\CT$-module.
More generally, we denote by 
$\Mod^{lf}_{\CT,\Upsilon}(\CH)$ the category of left $\CH$-modules which is a sum of submodules belonging to $\Mod^f_\Upsilon(\CT)$.
For $M\in \Mod^f_{\CT,\Upsilon}(\CH)$ we have a homomorphism
\begin{equation}
\Phi_M:M\otimes M^*\to \CH^*_{\CT,\Upsilon}
\end{equation}
of $\CH$-bimodules given by
\[
\langle\Phi_M(m\otimes m^*),h\rangle
=\langle m^*,hm\rangle
\qquad(m\in M, \; m^*\in M^*,\; h\in \CH).
\]
Here, $M\otimes M^*$ is regarded as an $\CH$-bimodule by
\[
h_1(m\otimes m^*)h_2
=
h_1m\otimes m^*h_2
\qquad
(m\in M, \; m^*\in M^*,\; h_1, h_2\in \CH).
\]
Denote by $\Mod^{\irr}_{\CT,\Upsilon}(\CH)$ the set of isomorphism classes of the irreducible $\CH$-modules belonging to $\Mod^f_{\CT,\Upsilon}(\CH)$.
By a standard argument we have the following (see for example \cite{T0}).
\begin{proposition}
\label{prop:Hdual2}
\begin{itemize}
\item[(i)]
We have 
\[
\CH^*_{\CT,\Upsilon}=
\sum_{M\in\Mod^f_{\CT,\Upsilon}(\CH)}\Image(\Phi_M)
=
\bigcup_{M\in\Mod^f_{\CT,\Upsilon}(\CH)}\Image(\Phi_M)
.
\]
\item[(ii)]
Assume that $\Mod^f_{\CT,\Upsilon}(\CH)$ is a semisimple category and that $\End_\CH(M)=K\id$ for any 
$M\in\Mod^{\irr}_{\CT,\Upsilon}(\CH)$.
Then 
\[
\bigoplus_{M\in\Mod^\irr_{\CT,\Upsilon}(\CH)}\Phi_M:
\bigoplus_{M\in\Mod^\irr_{\CT,\Upsilon}(\CH)}M\otimes M^*
\to \CH^*_{\CT,\Upsilon}
\]
is an isomorphism of $\CH$-bimodules.
\end{itemize}
\end{proposition}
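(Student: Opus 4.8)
The plan is to establish (i) directly from Proposition \ref{prop:Hdual} and then bootstrap (ii) out of (i) by reducing to irreducible modules and invoking the Jacobson density theorem.

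For (i), I would first check the inclusion $\Image(\Phi_M)\subset\CH^*_{\CT,\Upsilon}$ for $M\in\Mod^f_{\CT,\Upsilon}(\CH)$: writing $f=\Phi_M(m\otimes m^*)$ and using that $\Phi_M$ is a morphism of $\CH$-bimodules, one sees $\CH f\CH=\Phi_M(P)$, where $P$ is the $\CH$-sub-bimodule of $M\otimes M^*$ spanned by the elements $h_1m\otimes m^*h_2$; hence $\CH f\CH$ is a subquotient of $M\otimes M^*$. Since $M\in\Mod^f_\Upsilon(\CT)$ and the right $\CT$-weights occurring in $M^*$ are exactly the weights of $M$, the bimodule $M\otimes M^*$ lies in $\Mod^f_{\Upsilon\times\Upsilon}(\CT\otimes\CT)$, and therefore so does its subquotient $\CH f\CH$; thus $f$ satisfies condition (c) of Proposition \ref{prop:Hdual}. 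For the reverse inclusion, given $f\in\CH^*_{\CT,\Upsilon}$ I would take $M=\CH f$, which by condition (a) of Proposition \ref{prop:Hdual} is a finite-dimensional left $\CH$-module lying in $\Mod^f_\Upsilon(\CT)$, hence in $\Mod^f_{\CT,\Upsilon}(\CH)$, together with $m=1\cdot f\in M$ and $m^*\in M^*$ defined as the restriction to $\CH f$ of the evaluation $\xi\mapsto\langle\xi,1\rangle$ on $\CH^*$; this $m^*$ is well defined because $\langle hf,1\rangle=\langle f,h\rangle$ depends only on $hf$. Then $\langle\Phi_M(m\otimes m^*),h\rangle=\langle m^*,hf\rangle=\langle f,h\rangle$ for all $h\in\CH$, so $\Phi_M(m\otimes m^*)=f$; this gives $\CH^*_{\CT,\Upsilon}=\bigcup_M\Image(\Phi_M)$. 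The equality with the sum is then immediate, since $\Image(\Phi_{M_1})+\Image(\Phi_{M_2})\subset\Image(\Phi_{M_1\oplus M_2})$, so the union is already closed under addition.

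For (ii), surjectivity will follow from (i): decomposing an arbitrary $M\in\Mod^f_{\CT,\Upsilon}(\CH)$ as a finite direct sum $\bigoplus_iN_i$ of irreducibles (available by the semisimplicity hypothesis together with finite-dimensionality) one checks $\Phi_M(N_i\otimes N_j^*)=0$ for $i\neq j$ (because $hN_i\subset N_i$ while $N_j^*$ annihilates $N_i$) and $\Phi_M|_{N_i\otimes N_i^*}=\Phi_{N_i}$, so that $\Image(\Phi_M)=\sum_i\Image(\Phi_{N_i})$ and hence $\CH^*_{\CT,\Upsilon}=\sum_{N\in\Mod^\irr_{\CT,\Upsilon}(\CH)}\Image(\Phi_N)$. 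For injectivity I would identify $M\otimes M^*$ with $\End_K(M)$ and record the formula $\langle\Phi_M(\omega),h\rangle=\operatorname{tr}_M(\rho_M(h)\widetilde\omega)$, where $\rho_M\colon\CH\to\End_K(M)$ is the representation and $\widetilde\omega$ the endomorphism corresponding to $\omega$. Then, given a relation $\sum_{i=1}^r\Phi_{M_i}(\omega_i)=0$ with $M_1,\dots,M_r$ pairwise non-isomorphic irreducibles and $\omega_i\in M_i\otimes M_i^*$, the hypotheses $\End_\CH(M_i)=K\id$ and $\Hom_\CH(M_i,M_j)=0$ $(i\neq j)$ allow me to invoke the Jacobson density (Burnside) theorem and conclude that $\bigoplus_i\rho_{M_i}\colon\CH\to\bigoplus_i\End_K(M_i)$ is surjective; evaluating the relation on suitable elements $h$ then yields $\operatorname{tr}(\phi\widetilde\omega_i)=0$ for every $\phi\in\End_K(M_i)$, whence $\widetilde\omega_i=0$ by nondegeneracy of the trace form, i.e.\ $\omega_i=0$. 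Finally, $\bigoplus_M\Phi_M$ is a morphism of $\CH$-bimodules by construction, so it is an isomorphism of $\CH$-bimodules.

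The argument is essentially formal bookkeeping with the weight-space decompositions and the bimodule structure on $\CH^*$; the one genuinely nontrivial ingredient is the density theorem, which is exactly where the hypotheses that $K$ is a field, that the relevant irreducibles are finite-dimensional, and that $\End_\CH(M_i)=K\id$ enter. If one prefers, the density argument can be replaced by the classical linear independence of the matrix coefficients of pairwise non-isomorphic irreducible representations; in either case I do not anticipate a serious obstacle.
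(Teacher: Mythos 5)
The paper defers this statement to \cite{T0} as ``a standard argument,'' so there is no in-text proof to compare against; your write-up supplies exactly the kind of argument the paper is alluding to, and it is correct.

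For (i), both inclusions check out: the forward direction correctly observes that $\CH f\CH$ is a quotient of a sub-bimodule of $M\otimes M^*$, that $M^*$ inherits the weight decomposition of $M$, and hence condition (c) of Proposition~\ref{prop:Hdual} holds; the reverse direction via $M=\CH f$, $m=f$, $m^*=\langle\,\cdot\,,1\rangle$ is the standard ``every linear form is a matrix coefficient of its own cyclic module'' trick, and the identity $\langle\Phi_M(m\otimes m^*),h\rangle=\langle f,h\rangle$ is verified. For (ii), the decomposition of $\Image(\Phi_M)$ along the isotypic pieces, the trace-form reformulation, and the appeal to density (or equivalently, linear independence of matrix coefficients of pairwise non-isomorphic irreducibles) are all sound, and you correctly isolate where the hypotheses $\End_\CH(M)=K\,\id$ and finite-dimensionality enter. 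One cosmetic remark: the Jacobson density theorem as usually stated handles a single simple module; the surjectivity of $\bigoplus_i\rho_{M_i}$ onto the product also needs the fact that annihilators of pairwise non-isomorphic finite-dimensional simples over a $K$-algebra with $\End=K$ are pairwise comaximal (equivalently, Artin--Wedderburn for $\CH/\CI(\Gamma)$, which is precisely Lemma~\ref{lem:ss}(i) of the paper). You gesture at this by mentioning $\Hom_\CH(M_i,M_j)=0$, so the idea is there, but it would be worth stating the product version explicitly since that is the form actually used. No gap in substance.
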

\subsection{}
For a finite subset $\Gamma$ of $\Mod^{\irr}_{\CT,\Upsilon}(\CH)$
we define a two-sided ideal $\CI(\Gamma)$ of $\CH$ by
\[
\CI(\Gamma)=\{h\in \CH\mid
hM=\{0\}\;\;(\forall M\in \Gamma)\}.
\]
\begin{lemma}
\label{lem:ss}
Assume that $\Mod^f_{\CT,\Upsilon}(\CH)$ is a semisimple category and that $\End_\CH(M)=K\id$ for any 
$M\in\Mod^{\irr}_{\CT,\Upsilon}(\CH)$.
\begin{itemize}
\item[(i)]
For any finite subset $\Gamma$ of $\Mod^{\irr}_{\CT,\Upsilon}(\CH)$
we have
\[
\CH/\CI(\Gamma)\cong
\bigoplus_{M\in \Gamma}
\End_K(M).
\]
\item[(ii)]
For $f\in \CH^*$ we have $f\in \CH^*_{\CT,\Upsilon}$ if and only if 
there exists a finite subset $\Gamma$ of 
$\Mod^{\irr}_{\CT,\Upsilon}(\CH)$ such that
$\langle f, \CI(\Gamma)\rangle=\{0\}$.
\end{itemize}
\end{lemma}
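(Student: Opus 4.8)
The plan is to deduce \eqref{}\,(i) of Lemma \ref{lem:ss} from the Jacobson density (double centralizer) theorem, and then to derive (ii) from (i) together with Propositions \ref{prop:Hdual} and \ref{prop:Hdual2}.

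For (i), consider the $K$-algebra homomorphism $\rho\colon\CH\to\prod_{M\in\Gamma}\End_K(M)$ given by $h\mapsto(h|_M)_{M\in\Gamma}$; here each $M$ is finite-dimensional over $K$ since it lies in $\Mod^f_{\CT,\Upsilon}(\CH)\subset\Mod^f_\Upsilon(\CT)$. By the definition of $\CI(\Gamma)$ we have $\Ker\rho=\CI(\Gamma)$, so it remains to show that $\rho$ is surjective. I would apply the density theorem to the semisimple left $\CH$-module $S=\bigoplus_{M\in\Gamma}M$: the distinct elements of $\Gamma$ are pairwise non-isomorphic irreducible modules with $\End_\CH(M)=K\id$ by hypothesis, so $\End_\CH(S)=\prod_{M\in\Gamma}K\id_M$ and the bicommutant is $\End_{\End_\CH(S)}(S)=\prod_{M\in\Gamma}\End_K(M)$. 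Density identifies the image of $\CH$ in $\End_K(S)$ with this bicommutant, and since $\Gamma$ is finite the product agrees with $\bigoplus_{M\in\Gamma}\End_K(M)$. Thus $\rho$ is surjective and $\CH/\CI(\Gamma)\cong\bigoplus_{M\in\Gamma}\End_K(M)$, proving (i).

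For the ``only if'' direction of (ii): if $f\in\CH^*_{\CT,\Upsilon}$, then Proposition \ref{prop:Hdual}\,(d) provides a two-sided ideal $\CI$ of $\CH$ with $\langle f,\CI\rangle=\{0\}$ and $\CH/\CI\in\Mod^f_{\Upsilon\times\Upsilon}(\CT\otimes\CT)$. In particular, restricting the weight decomposition to the left $\CT$-action, $\CH/\CI$ regarded as a left $\CH$-module lies in $\Mod^f_{\CT,\Upsilon}(\CH)$; by the assumed semisimplicity it is a finite direct sum of irreducible modules. Let $\Gamma$ be the (finite) set of their isomorphism classes. Then $\CI(\Gamma)$ annihilates $\CH/\CI$ as a left $\CH$-module, and evaluating on the class of $1$ gives $\CI(\Gamma)\subseteq\CI$, whence $\langle f,\CI(\Gamma)\rangle=\{0\}$.

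For the ``if'' direction: suppose $\langle f,\CI(\Gamma)\rangle=\{0\}$ for some finite $\Gamma\subset\Mod^{\irr}_{\CT,\Upsilon}(\CH)$. Using (i) and the canonical identification $\End_K(M)\cong M\otimes_K M^*$ of $\CH$-bimodules, we obtain $\CH/\CI(\Gamma)\cong\bigoplus_{M\in\Gamma}M\otimes_K M^*$ as $\CH$-bimodules. Each $M$ lies in $\Mod^f_\Upsilon(\CT)$ as a left $\CT$-module, and a short bookkeeping check (using that $\CT$ is commutative and $\Upsilon$ is a subgroup of $\Hom_\alg(\CT,K)$) shows that $M^*$ lies in $\Mod^f_\Upsilon(\CT)$ as a right $\CT$-module, so $M\otimes_K M^*\in\Mod^f_{\Upsilon\times\Upsilon}(\CT\otimes\CT)$ and hence $\CH/\CI(\Gamma)\in\Mod^f_{\Upsilon\times\Upsilon}(\CT\otimes\CT)$. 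Applying Proposition \ref{prop:Hdual}\,(d) with $\CI=\CI(\Gamma)$ then gives $f\in\CH^*_{\CT,\Upsilon}$. The only genuinely nontrivial input is the density/double-centralizer step in (i); everything else is weight-space bookkeeping of the same flavour as in Propositions \ref{prop:Hdual} and \ref{prop:Hdual2}.
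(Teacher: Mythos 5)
Your proof is correct and follows essentially the same strategy as the paper, with two cosmetic differences. For (i), you invoke the Jacobson density / double-centralizer theorem directly on $S=\bigoplus_{M\in\Gamma}M$ to get surjectivity of $\rho$; the paper instead observes that each $M\in\Gamma$ is irreducible over $\CH/\CI(\Gamma)$ and establishes the dimension bound $\dim \CH/\CI(\Gamma)\leqq\sum_{M\in\Gamma}(\dim M)^2$ by a one-at-a-time reduction (adding one irreducible to $\Gamma$ and using $\Ker(\CI(\Gamma)\to\End_K(M))=\CI(\Gamma')$), then appeals to the same structure theory of finite-dimensional semisimple algebras as a ``well-known fact.'' For the ``only if'' direction of (ii), you go through Proposition \ref{prop:Hdual}(d) and decompose the finite-dimensional left $\CH$-module $\CH/\CI$; the paper works directly with condition (a), taking $\Gamma$ to be the set of isomorphism classes of irreducible constituents of the finite-dimensional module $\CH f$, and concludes via $\langle f,\CI(\Gamma)\rangle=\langle\CI(\Gamma)f,1\rangle\subset\langle\CI(\Gamma)(\CH f),1\rangle=\{0\}$. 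Both routes use the same semisimplicity input and yield the same conclusion; the ``if'' direction agrees with the paper's (unwritten) argument from (i).
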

\begin{proof}
(i) 
Note that any $M\in \Gamma$ is an irreducible $\CH/\CI(\Gamma)$-module.
Hence by a well-known fact on finite dimensional algebras the assertion follows from
\[
\dim \CH/\CI(\Gamma)\leqq\sum_{M\in\Gamma}(\dim M)^2.
\]
To verify this it is sufficient to show for finite subsets
$\Gamma$, $\Gamma'$ of $\Mod^{\irr}_{\CT,\Upsilon}(\CH)$ satisfying 
$\Gamma'=\Gamma\sqcup\{M\}$ that 
$\dim \CI(\Gamma)/\CI(\Gamma')\leqq (\dim M)^2$.
This follows from
\[
\Ker(\CI(\Gamma)\to\End_K(M))=\CI(\Gamma').
\]

(ii) Assume $f\in \CH^*_{\CT,\Upsilon}$.
For 
\[
\Gamma=\{M\in \Mod^{\irr}_{\CT,\Upsilon}(\CH)
\mid
\Hom_\CH(M,\CH f)\ne\{0\}
\]
we have
\[
\langle f,\CI(\Gamma)\rangle
=\langle \CI(\Gamma)f,1\rangle
\subset\langle \CI(\Gamma)(\CH f),1\rangle=
\{0\}.
\]
The converse is clear from (i).
\end{proof}

\subsection{}
In general, for a coalgebra $\CC$
we denote by $\Comod(\CC)$ (resp.\
 $\Comod^f(\CC)$) the category of right $\CC$-comodules 
 (resp.\ finite dimensional right $\CC$-comodules).
 
 Note that for $M\in\Mod^{f}_{\CT,\Upsilon}(\CH)$ we have a right $\CH^*_{\CT,\Upsilon}$-comodule structure
$\gamma_M:M\to M\otimes \CH^*_{T,\Upsilon}$ given by
\[
hm=\sum_a\langle f_a,h\rangle m_a\;(h\in \CH)
\;\Longrightarrow\;
\gamma_M(m)=\sum_am_a\otimes f_a.
\]
This induces functors
\begin{equation}
\label{eq:ModComod}
\Mod^{f}_{\CT,\Upsilon}(\CH)\to
\Comod^f(\CH^*_{\CT,\Upsilon}),
\quad
\Mod^{lf}_{\CT,\Upsilon}(\CH)\to
\Comod(\CH^*_{\CT,\Upsilon}).
\end{equation}

\begin{proposition}
The functors in \eqref{eq:ModComod} give equivalences of categories.
\end{proposition}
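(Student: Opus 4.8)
The statement is the familiar equivalence between comodules over a coalgebra and the rational modules over its dual, here for the pair $\CT\subset\CH$; since both structure maps (the coaction $\gamma_M$, and the $\CH$-action recovered from a coaction) are already written down, the plan is to verify the bookkeeping in a few steps. First I would check that $M\mapsto(M,\gamma_M)$ is well defined. For $M\in\Mod^f_{\CT,\Upsilon}(\CH)$, picking a basis $(m_b)$ and writing $hm_b=\sum_a f_{ab}(h)\,m_a$, one has $f_{ab}\in\Image(\Phi_M)\subseteq\CH^*_{\CT,\Upsilon}$ by Proposition \ref{prop:Hdual2}(i), so $\gamma_M$ indeed takes values in $M\otimes\CH^*_{\CT,\Upsilon}$; and the module identities $(h_1h_2)m=h_1(h_2m)$, $1m=m$ become coassociativity and the counit axiom for $\gamma_M$, because the comultiplication and counit of $\CH^*_{\CT,\Upsilon}$ are the transposes of the multiplication and unit of $\CH$. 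The case of $\Mod^{lf}_{\CT,\Upsilon}(\CH)$ and $\Comod(\CH^*_{\CT,\Upsilon})$ reduces to the finite-dimensional one, each element of an object of $\Mod^{lf}_{\CT,\Upsilon}(\CH)$ lying in a finite-dimensional $\CH$-submodule that belongs to $\Mod^f_\Upsilon(\CT)$.

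Next I would construct the quasi-inverse. The natural pairing $\CH\times\CH^*_{\CT,\Upsilon}\to K$ turns any right $\CH^*_{\CT,\Upsilon}$-comodule $(N,\rho_N)$ into a left $\CH$-module via $h\cdot n=\sum_a\langle f_a,h\rangle n_a$ where $\rho_N(n)=\sum_a n_a\otimes f_a$; this is an action because the product of $\CH$ is the transpose of the coproduct of $\CH^*_{\CT,\Upsilon}$, and forming $\gamma$ from this action returns $\rho_N$ because $\CH^*_{\CT,\Upsilon}\subset\CH^*$, so elements of $\CH^*_{\CT,\Upsilon}$ are determined by their values on $\CH$. The substance of the step is that this $\CH$-module lies in $\Mod^{lf}_{\CT,\Upsilon}(\CH)$, and in $\Mod^f_{\CT,\Upsilon}(\CH)$ when $N$ is finite dimensional. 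Here I would invoke Proposition \ref{prop:Hdual}(a): $\CH f\in\Mod^f_\Upsilon(\CT)$ for every $f\in\CH^*_{\CT,\Upsilon}$, and since $f=1\cdot f\in\CH f$, this shows $\CH^*_{\CT,\Upsilon}=\sum_f\CH f$ is itself an object of $\Mod^{lf}_{\CT,\Upsilon}(\CH)$, where the $\CH$-action is precisely the one induced by the pairing (equivalently, by viewing $\CH^*_{\CT,\Upsilon}$ as a comodule over itself). Since $\rho_N$ embeds $N$ colinearly --- hence $\CH$-linearly --- into a direct sum of copies of $\CH^*_{\CT,\Upsilon}$, and $\Mod^{lf}_{\CT,\Upsilon}(\CH)$ is closed under direct sums and $\CH$-submodules, it follows that $N\in\Mod^{lf}_{\CT,\Upsilon}(\CH)$; and if $N$ is finite dimensional its $\CT$-weight decomposition, inherited from the ambient direct sum, has support in $\Upsilon$, so $N\in\Mod^f_{\CT,\Upsilon}(\CH)$.

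It remains to see that the functors are fully faithful, i.e.\ that a $K$-linear map between comodules is $\CH^*_{\CT,\Upsilon}$-colinear exactly when it is $\CH$-linear for the induced actions. Pairing the colinearity identity $\gamma_{M'}\circ\phi=(\phi\otimes\id)\circ\gamma_M$ with an arbitrary $h\in\CH$ in the $\CH^*_{\CT,\Upsilon}$-factor gives $\phi(hm)=h\phi(m)$; conversely this condition forces the identity, since two elements of $M'\otimes\CH^*_{\CT,\Upsilon}$ with the same image under all $\id\otimes\langle-,h\rangle$ coincide, again because $\CH^*_{\CT,\Upsilon}\subset\CH^*$. Combining the three steps, the functors of \eqref{eq:ModComod} are fully faithful and essentially surjective, hence equivalences of categories.

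I expect the only nontrivial point to be the identification of the essential image in the second step: showing that a $\CH^*_{\CT,\Upsilon}$-comodule, viewed as an $\CH$-module through the pairing, does acquire a $\CT$-weight decomposition supported on $\Upsilon$. This rests on Proposition \ref{prop:Hdual} in the guise $\CH f\in\Mod^f_\Upsilon(\CT)$ (which makes $\CH^*_{\CT,\Upsilon}$ an object of $\Mod^{lf}_{\CT,\Upsilon}(\CH)$) together with the stability of $\Mod^{lf}_{\CT,\Upsilon}(\CH)$, like that of $\Mod^f_\Upsilon(\CT)$, under submodules, quotients and arbitrary direct sums; once these are in place everything else is formal manipulation with the pairing $\CH\times\CH^*_{\CT,\Upsilon}\to K$ and the defining property of the coaction.
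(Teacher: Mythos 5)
Your proposal is correct and takes essentially the same approach as the paper: define the $\CH$-module structure on a comodule via the natural pairing with $\CH^*_{\CT,\Upsilon}$ and check this gives a quasi-inverse to \eqref{eq:ModComod}. The paper dismisses the verification that the resulting module lies in $\Mod^f_{\CT,\Upsilon}(\CH)$ as ``easily seen''; your argument---embed $N$ colinearly, hence $\CH$-linearly, into a direct sum of copies of $\CH^*_{\CT,\Upsilon}$, note that $\CH^*_{\CT,\Upsilon}$ lies in $\Mod^{lf}_{\CT,\Upsilon}(\CH)$ by Proposition \ref{prop:Hdual}(a), and use closure under submodules and sums---just spells that step out explicitly.
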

\begin{proof}
Assume that $M$ is a finite-dimensional right $\CH^*_{T,\Upsilon}$-comodule with respect to
$\gamma:M\to M\otimes \CH^*_{T,\Upsilon}$.
Then we can define a left $\CH$-module structure of $M$ by
\[
\gamma(m)=\sum m_a\otimes f_a
\;\Longrightarrow\;
hm=\sum_i\langle f_a,h\rangle m_a
\quad(h\in\CH).
\]
It is easily seen that this left $\CH$-module belongs to $\Mod^f_{\CT,\Upsilon}(\CH)$.
Moreover, this induces the inverse to the functors in \eqref{eq:ModComod}.
\end{proof}
We will sometimes identify $\Mod^{lf}_{\CT,\Upsilon}(\CH)$ with 
$\Comod(\CH^*_{\CT,\Upsilon})$.
\section{Coordinate algebras of the quantized algebraic groups}
\subsection{}
Let $K$ be a field.
We set
\[
\overline{O}_K(G)=\overline{U}_{K}(\Gg)^*_{\overline{U}
_K(\Gh),X}
.
\]
It is  isomorphic as a Hopf algebra to the coordinate algebra of the reductive algebraic group $G_K$ over $K$ with the same root datum $(X,\Delta, Y,\Delta^\vee)$ as $G$.
\subsection{}
Let $K$ be a field equipped with $\zeta\in K^\times$.

By Lemma \ref{lem:cr} the map $X\ni\lambda\mapsto\chi_\lambda\in\Hom_{\alg}(U_{K,\zeta}(\Gh),K)$ is an injective group homomorphism.
We will regard $X$ as a subgroup of $\Hom_{\alg}(U_{K,\zeta}(\Gh),K)$ in the following.
Set
\begin{gather*}
\;\;
O_{K,\zeta}(B)=
U_{K,\zeta}(\Gb)^*_{U_{K,\zeta}(\Gh),X},
\;\;
O_{K,\zeta}(H)=
U_{K,\zeta}(\Gh)^*_{U_{K,\zeta}(\Gh),X},
\\
O_{K,\zeta}(G)=
U_{K,\zeta}(\Gg)^*_{U_{K,\zeta}(\Gh),X}.
\end{gather*}
It is easily seen that
\[
O_{K,\zeta}(H)=\bigoplus_{\lambda\in X}
K\chi_\lambda.
\]
We identify 
$U_{K,\zeta}(\Gn)^*\otimes U_{K,\zeta}(\Gh)^*\otimes U_{K,\zeta}(\Gn^+)^*$
with a subspace of 
$U_{K,\zeta}(\Gg)^*$
by
\[
\langle\psi\otimes\chi\otimes\varphi,
yhx\rangle
=
\langle\psi, y\rangle
\langle\chi, h\rangle
\langle\varphi, x\rangle
\quad
(y\in U_{K,\zeta}(\Gn), 
h\in U_{K,\zeta}(\Gh), 
x\in U_{K,\zeta}(\Gn^+)).
\]
Similarly, we identify 
$U_{K,\zeta}(\Gn)^*\otimes U_{K,\zeta}(\Gh)^*$ with a subspace of 
$U_{K,\zeta}(\Gb)^*$.
Set 
\begin{align*}
U_{K,\zeta}(\Gn)^\bigstar
=&\bigoplus_{\gamma\in Q^+}(U_{K,\zeta}(\Gn)_{-\gamma})^*
\subset
U_{K,\zeta}(\Gn)^*,
\\
U_{K,\zeta}(\Gn^+)^\bigstar
=&\bigoplus_{\gamma\in Q^+}(U_{K,\zeta}(\Gn^+)_{\gamma})^*
\subset
U_{K,\zeta}(\Gn^+)^*.
\end{align*}
Then we have 
\begin{align}
\label{eq:tri-dual}
O_{K,\zeta}(G)
\subset
U_{K,\zeta}(\Gn)^\bigstar
\otimes
O_{K,\zeta}(H)\otimes
U_{K,\zeta}(\Gn^+)^\bigstar
\subset
U_{K,\zeta}(\Gg)^*.
\end{align}
Moreover, we have
\begin{align}
O_{K,\zeta}(B)
=
U_{K,\zeta}(\Gn)^\bigstar
\otimes
O_{K,\zeta}(H)
\subset
U_{K,\zeta}(\Gb)^*,
\end{align}
and the natural homomorphism
$O_{K,\zeta}(G)\to O_{K,\zeta}(B)$
is surjective (see, for example \cite[Section 2.7]{T0}).
Hence we have
\[
O_{K,\zeta}(B)\cong
O_{K,\zeta}(G)/\{f\in O_{K,\zeta}(G)\mid
\langle f,U_{K,\zeta}(\Gb)\rangle=\{0\}\}.
\]
\begin{proposition}
\label{prop:O1}
Assume $\zeta_\alpha^2=1$ for any $\alpha\in\Delta$.
The surjection
$U_{K,\zeta}(\Gg)^*\to \overline{U}_{K}(\Gg)^*$
induced by the embedding 
$\overline{U}_{K}(\Gg)\subset U_{K,\zeta}(\Gg)$
given in Proposition \ref{prop:pm1}
restricts to an isomorphism
\begin{equation}
\label{eq:O1}
O_{K,\zeta}(G)\cong \overline{O}_K(G)
\end{equation}
of $\overline{U}_{K}(\Gg)$-bimodules.
\end{proposition}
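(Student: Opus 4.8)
The plan is to exploit the algebra isomorphism $\overline{U}_K(\Gg)\cong U'_{K,\zeta}(\Gg)\subset U_{K,\zeta}(\Gg)$ of Proposition~\ref{prop:pm1} together with the representation-theoretic comparisons recorded in Propositions~\ref{prop:rep-pm1a} and \ref{prop:rep-pm1}. The target $\overline{O}_K(G)$ is, by definition, $\overline{U}_K(\Gg)^*_{\overline{U}_K(\Gh),X}$, and $O_{K,\zeta}(G)=U_{K,\zeta}(\Gg)^*_{U_{K,\zeta}(\Gh),X}$. So the heart of the matter is to show that the transpose of the inclusion $\overline{U}_K(\Gg)\hookrightarrow U_{K,\zeta}(\Gg)$ carries the subspace $O_{K,\zeta}(G)$ of $U_{K,\zeta}(\Gg)^*$ bijectively onto the subspace $\overline{O}_K(G)$ of $\overline{U}_K(\Gg)^*$, and that this bijection is a bimodule map.

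First I would set up the bimodule compatibility. Since $\overline{U}_K(\Gg)$ is a subalgebra of $U_{K,\zeta}(\Gg)$, the restriction map $U_{K,\zeta}(\Gg)^*\to\overline{U}_K(\Gg)^*$ is automatically a homomorphism of $\overline{U}_K(\Gg)$-bimodules for the bimodule structures defined in Section~4; so that part is formal and needs only to be stated. Next I would identify both sides with matrix coefficient spaces via Proposition~\ref{prop:Hdual2}(ii) (in the ``coordinate algebra'' form of Lemma~\ref{lem:ss}). For $\overline{U}_K(\Gg)$ the relevant category is $\Mod^f_{\overline{U}_K(\Gh),X}(\overline{U}_K(\Gg))$, which is exactly the category of finite-dimensional integrable $\overline{U}_K(\Gg)$-modules; for $U_{K,\zeta}(\Gg)$ it is $\Mod^f_{U_{K,\zeta}(\Gh),X}(U_{K,\zeta}(\Gg))$, the finite-dimensional integrable $U_{K,\zeta}(\Gg)$-modules. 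By Proposition~\ref{prop:rep-pm1a} the embedding $\overline{U}_K(\Gg)\subset U_{K,\zeta}(\Gg)$ induces an equivalence $\Mod_\inte(U_{K,\zeta}(\Gg))\cong\Mod_\inte(\overline{U}_K(\Gg))$, and this equivalence preserves both finite-dimensionality (weight spaces are visibly unchanged, the underlying $R$-modules being the same) and the $X$-weight grading (the weight spaces $V_\mu$ are literally the same subspaces, since $\chi_\mu$ on $U_{K,\zeta}(\Gh)$ restricts compatibly with $\overline{\chi}_\mu$ on $\overline{U}_K(\Gh)$ under the identification $\binom{y}{n}\leftrightarrow h(y,n)$ of Proposition~\ref{prop:pm1}). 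Hence the two module categories in question are identified, and with them their irreducible objects.

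Given this, I would conclude as follows. Write $\overline{O}_K(G)=\sum_{M}\Image(\overline{\Phi}_M)$ over finite-dimensional integrable $\overline{U}_K(\Gg)$-modules $M$, and $O_{K,\zeta}(G)=\sum_{N}\Image(\Phi_N)$ over finite-dimensional integrable $U_{K,\zeta}(\Gg)$-modules $N$ (Proposition~\ref{prop:Hdual2}(i), which requires no semisimplicity). Under the equivalence of categories, if $N$ corresponds to $M$ then $N$ and $M$ have the same underlying space, the same dual $M^*$, and for $h\in\overline{U}_K(\Gg)\subset U_{K,\zeta}(\Gg)$ the actions agree; thus $\langle\Phi_N(m\otimes m^*),h\rangle=\langle m^*,hm\rangle=\langle\overline{\Phi}_M(m\otimes m^*),h\rangle$, i.e.\ the restriction of $\Phi_N(m\otimes m^*)$ to $\overline{U}_K(\Gg)$ equals $\overline{\Phi}_M(m\otimes m^*)$. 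Therefore the restriction map sends $\Image(\Phi_N)$ onto $\Image(\overline{\Phi}_M)$, and summing over all $N$ (equivalently all $M$) gives that it maps $O_{K,\zeta}(G)$ onto $\overline{O}_K(G)$. For injectivity I would argue that an element $f\in O_{K,\zeta}(G)$ vanishing on $\overline{U}_K(\Gg)$ must be zero: using the triangular decomposition \eqref{eq:tri-dual} and \eqref{eq:triUprime}, any $f\in O_{K,\zeta}(G)$ is determined by its values on products of elements of $U'_{K,\zeta}(\Gn)$, $U'_{K,\zeta}(\Gh)$, $U'_{K,\zeta}(\Gn^+)$, and $U'_{K,\zeta}(\Gg)\cong\overline{U}_K(\Gg)$; more directly, since $f\in O_{K,\zeta}(G)$ has $\CH f\in\Mod^f_{U_{K,\zeta}(\Gh),X}$ it factors through some $\CH/\CI(\Gamma)$ which is a finite-dimensional quotient lying in the integrable category, and vanishing of $f$ on $\overline{U}_K(\Gg)$ forces, by the equivalence of categories and Lemma~\ref{lem:ss}, that $f$ annihilates all of $\CH/\CI(\Gamma)$, i.e.\ $f=0$.

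The main obstacle, and the place requiring genuine care rather than formal manipulation, is the verification that the category equivalence $\Mod_\inte(U_{K,\zeta}(\Gg))\cong\Mod_\inte(\overline{U}_K(\Gg))$ of Proposition~\ref{prop:rep-pm1a} is compatible with \emph{all} the structures packaged into the matrix coefficient construction of Section~4 and Section~5 — in particular that it restricts to an equivalence between the two relevant weight-module categories $\Mod^f_{\overline{U}_K(\Gh),X}$ and $\Mod^f_{U_{K,\zeta}(\Gh),X}$ and respects the pairings defining $\Phi_M$. Since the embedding of Proposition~\ref{prop:pm1} is \emph{not} a Hopf algebra map in general (Remark~\ref{rem:isomU}), one must check that nothing in the definition of $O_{K,\zeta}(G)$ as a \emph{subspace} of the dual actually uses the coalgebra structure of $U_{K,\zeta}(\Gg)$: indeed, the defining condition (Proposition~\ref{prop:Hdual}) is phrased via the bimodule action $\langle h_1 f h_2,h\rangle=\langle f,h_2 h h_1\rangle$, which involves only the algebra structure, so the subspace identification is legitimate. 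The isomorphism of \emph{Hopf algebras} would be a separate (and in general false) claim, but the statement here is only an isomorphism of $\overline{U}_K(\Gg)$-bimodules, for which the above suffices.
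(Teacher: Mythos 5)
Your overall strategy coincides with the paper's: surjectivity onto $\overline{O}_K(G)$ via Proposition \ref{prop:rep-pm1a} and the identity $\langle\Phi_N(m\otimes m^*),h\rangle=\langle\overline{\Phi}_M(m\otimes m^*),h\rangle$ for $h\in\overline{U}_K(\Gg)$, and injectivity by comparing kernels of matrix-coefficient maps. The surjectivity half and the closing remark (that only the algebra structure, not the coalgebra structure, enters the definition of $O_{K,\zeta}(G)$ as a subspace of the dual) are correct.

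The injectivity half, however, has a genuine gap, in both of the variants you offer. The first variant — ``any $f\in O_{K,\zeta}(G)$ is determined by its values on products of elements of $U'_{K,\zeta}(\Gn)$, $U'_{K,\zeta}(\Gh)$, $U'_{K,\zeta}(\Gn^+)$'' — is circular: by \eqref{eq:triUprime} these products span exactly $U'_{K,\zeta}(\Gg)=\overline{U}_K(\Gg)$, so this sentence \emph{is} the injectivity assertion. It is not a formal consequence of the triangular decompositions, because $U'_{K,\zeta}(\Gh)$ is a proper subalgebra of $U_{K,\zeta}(\Gh)$ (it does not contain the $k_y$), hence $U'_{K,\zeta}(\Gg)\subsetneq U_{K,\zeta}(\Gg)$ and a general element of $U_{K,\zeta}(\Gg)^*$ is certainly not determined by its restriction to $U'_{K,\zeta}(\Gg)$. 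The second variant misapplies Lemma \ref{lem:ss}: its standing semisimplicity hypothesis fails here in general (e.g.\ for $K$ of positive characteristic, where $\Mod^f_{U_{K,\zeta}(\Gh),X}(U_{K,\zeta}(\Gg))\cong\Mod^f_{\overline{U}_K(\Gh),X}(\overline{U}_K(\Gg))$ is the non-semisimple category of rational $G_K$-modules), and even granting a replacement one would still have to prove that the image of $\overline{U}_K(\Gg)$ in the finite-dimensional quotient $U_{K,\zeta}(\Gg)/\CI(\Gamma)$ is the whole quotient. The missing step, which is the actual content of the paper's proof, is the reduction to weight components: since the restriction map is a $\overline{U}_K(\Gg)$-bimodule map and the $X$-weight decompositions with respect to $U_{K,\zeta}(\Gh)$ and $\overline{U}_K(\Gh)$ agree, it suffices to compare $\Ker(M_\lambda\otimes M^*\to U_{K,\zeta}(\Gg)^*)$ with $\Ker(M_\lambda\otimes M^*\to\overline{U}_K(\Gg)^*)$ for each $\lambda$. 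On $M_\lambda$ the Cartan subalgebras act by the known scalars $\chi_\lambda$, $\overline{\chi}_\lambda$, so both kernels are computed from values on $U_{K,\zeta}(\Gn)\otimes U_{K,\zeta}(\Gn^+)$, resp.\ $\overline{U}_K(\Gn)\otimes\overline{U}_K(\Gn^+)$; and by \eqref{eq:UU-prime} the subalgebras $U'_{K,\zeta}(\Gn)$, $U'_{K,\zeta}(\Gn^+)$ differ from $U_{K,\zeta}(\Gn)$, $U_{K,\zeta}(\Gn^+)$ only by the invertible group-likes $k_{\gamma^\dagger}$, $k_{\gamma-\gamma^\dagger}$, which act by invertible scalars on weight vectors. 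Together with Lemma \ref{lem:u-prime} this identifies the two kernels; without some such argument your injectivity claim is unsupported.
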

\begin{proof}
By Proposition \ref{prop:rep-pm1a} 
the surjection
$U_{K,\zeta}(\Gg)^*\to \overline{U}_{K}(\Gg)^*$
restricts to a surjective homomorphism
$O_{K,\zeta}(G)\to \overline{O}_K(G)$
of $\overline{U}_{K}(\Gg)$-bimodules.
In order to prove that it is injective
it is sufficient to show 
\[
\Ker(M\otimes M^*\to\overline{U}_{K}(\Gg)^*)
=
\Ker(M\otimes M^*\to{U}_{K,\zeta}(\Gg)^*)
\]
for any $M\in\Mod_{\inte}({U}_{K,\zeta}(\Gg))
=\Mod_{\inte}(\overline{U}_K(\Gg))$
by Proposition \ref{prop:Hdual2}.
Since $M\otimes M^*\to\overline{U}_{K}(\Gg)^*$ is a homomorphism of $\overline{U}_K(\Gg)$-bimodule, we have only to show
\[
\Ker(M_\lambda\otimes M^*\to\overline{U}_{K}(\Gg)^*)
=
\Ker(M_\lambda\otimes M^*\to{U}_{K,\zeta}(\Gg)^*)
\]
for any $\lambda\in X$.
By
$\overline{U}_{K}(\Gg)\cong
\overline{U}_{K}(\Gn)\otimes
\overline{U}_{K}(\Gn^+)\otimes
\overline{U}_{K}(\Gh)
$
we have
\[
\Ker(M_\lambda\otimes M^*\to\overline{U}_{K}(\Gg)^*)
=
\Ker(M_\lambda\otimes M^*\to
(\overline{U}_{K}(\Gn)\otimes
\overline{U}_{K}(\Gn^+))^*).
\]
Similarly, we have
\[
\Ker(M_\lambda\otimes M^*\to{U}_{K,\zeta}(\Gg)^*)
=
\Ker(M_\lambda\otimes M^*\to
({U}'_{K,\zeta}(\Gn)\otimes
{U}'_{K,\zeta}(\Gn^+))^*).
\]
They coincide under the identification $\overline{U}_{K}(\Gg)={U}'_{K,\zeta}(\Gg)$.
\end{proof}
\begin{remark}
{\rm
The isomorphism \eqref{eq:O1} does not preserve the ring structure in general $($see Remark \ref{rem:isomU}$)$.
}
\end{remark}

\subsection{}
Assume that $\zeta\in K^\times$ has the order $\ell<\infty$.
Define the root datum $({}^\sharp X,{}^\sharp\Delta,{}^\sharp Y,{}^\sharp\Delta^\vee)$ and the corresponding complex reductive group ${}^\sharp G$ as in Section \ref{sect:QE}.
The quantum Frobenius homomorphism $\CF$ induces the injection ${}^t\CF: U_{{K,\zeta}}({}^\sharp\Gg)^*\hookrightarrow U_{K,\zeta}(\Gg)^*$.
This restricts to an injective  Hopf algebra homomorphism
\begin{equation}
\label{eq:embA}
O_{K,\zeta}({}^\sharp G)\hookrightarrow O_{K,\zeta}(G).
\end{equation}

\section{Induction functor}
\subsection{}
Let $\CH$, $\CH'$ be Hopf algebras over a field $K$ with invertible antipodes, and let $p:\CH\to \CH'$ be a surjective Hopf algebra homomorphism.
We have a natural exact functor 
\[
\Res^{\CH}_{\CH'}:\Comod(\CH)\to \Comod(\CH'),
\]
where, for a $K$-module $V$ with the right $\CH$-comodule structure $\gamma:V\to V\otimes\CH$, 
we associate the right $\CH'$-comodule given by $(1\otimes p)\circ\gamma:V\to V\otimes\CH'$.
We can also define the induction functor 
\[
\Ind^{\CH}_{\CH'}:\Comod(\CH')\to\Comod(\CH)
\]
as follows.
Let $M$ be a right $\CH'$-comodule.
Regarding $\CH$ as a right $\CH'$-comodule 
via 
$
(1\otimes p)\circ\Delta:\CH\to \CH\otimes \CH'
$, 
the tensor product $M\otimes \CH$ of the two right $\CH'$-comodules is endowed with a right  $\CH'$-comodule structure
$\gamma:M\otimes \CH\to (M\otimes \CH)\otimes \CH'$.
We set
\[
\Ind^{\CH}_{\CH'}(M)
=\{n\in M\otimes \CH\mid \gamma(n)=n\otimes 1\}.
\]
Then $\Ind^{\CH}_{\CH'}(M)$ is endowed with a right $\CH$-comodule structure induced by that of 
$M\otimes \CH$
given by
\[
M\otimes \CH\to (M\otimes \CH)\otimes \CH
\qquad
(m\otimes h\mapsto
\sum_{k}(m\otimes h'_{k})
\otimes S^{-1}h_{k}),
\]
where
$\Delta(h)=\sum_kh_k\otimes h'_k$.
For $V\in\Comod(\CH)$, $M\in\Comod(\CH')$ we have
\[
\Hom(V,\Ind_{\CH'}^{\CH}(M))\cong\Hom(\Res^{\CH}_{\CH'}(V),M).
\]
It follows that 
$\Ind_{\CH'}^{\CH}$ is left exact, and we have
\[
\Ind_{\CH''}^{\CH}=\Ind_{\CH'}^{\CH}\circ \Ind_{\CH''}^{\CH'}
\]
for a sequence $\CH\to \CH'\to \CH''$ of surjective homomorphisms of Hopf algebras with invertible antipodes.
The following fact is well-known.
\begin{lemma}
\label{lem:IRT}
For $M\in\Comod(\CH')$ and $V\in\Comod(\CH)$ we have
\[
\Ind^\CH_{\CH'}(\Res^\CH_{\CH'}(V)\otimes M)
\cong
V\otimes \Ind^\CH_{\CH'}(M),
\quad
\Ind^\CH_{\CH'}(M\otimes\Res^\CH_{\CH'}(V))
\cong
\Ind^\CH_{\CH'}(M)\otimes V.
\]
\end{lemma}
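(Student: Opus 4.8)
The plan is to exhibit explicit comodule isomorphisms obtained by an ``untwisting'' that absorbs the coaction of $V$ into the $\CH$-factor. I would prove the second isomorphism directly in this way and then deduce the first from it by passing to the co-opposite Hopf algebras, which is the point at which the hypothesis that the antipodes are invertible is used.

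For the second isomorphism, write $\gamma_V(v)=\sum v_{(0)}\otimes v_{(1)}$ with $v_{(1)}\in\CH$ for the coaction of $V$, and realize both $\Ind^{\CH}_{\CH'}(M\otimes\Res^{\CH}_{\CH'}V)$ and $\Ind^{\CH}_{\CH'}(M)\otimes V$ as subspaces of $M\otimes V\otimes\CH$ (for the second, via the flip of the last two tensor factors of $M\otimes\CH\otimes V$). Introduce the $K$-linear automorphism
\[
\theta\colon M\otimes V\otimes\CH\to M\otimes V\otimes\CH,\qquad
m\otimes v\otimes h\mapsto\sum m\otimes v_{(0)}\otimes v_{(1)}h,
\]
with inverse $m\otimes v\otimes h\mapsto\sum m\otimes v_{(0)}\otimes S(v_{(1)})h$ (this inverse needs only the antipode identities, not their invertibility). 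I would then check, by Sweedler computations using coassociativity of $\gamma_V$ and of $\Delta$ and the fact that $p$ is a coalgebra map, that $\theta$ intertwines the $\CH'$-coaction on $M\otimes V\otimes\CH$ whose coinvariants form the left-hand side (the coaction of $M\otimes\Res V$ tensored with $(1\otimes p)\Delta$ on $\CH$) with the one, in which $V$ is a passive spectator, whose coinvariants form the right-hand side; hence $\theta$ restricts to a linear bijection between the two coinvariant subspaces. A second, similar computation shows that $\theta$ also intertwines the two residual $\CH$-coactions, so the restriction is an isomorphism in $\Comod(\CH)$. The map $\theta$ is designed precisely so that the contribution of $v$ to the $\CH'$-coaction, which in $M\otimes\Res V$ sits to the right of the contribution of $M$, gets moved into the $\CH$-slot, where on recomputing the coaction it reappears in the same position.

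For the first isomorphism I would invoke a symmetry. Replace $\CH,\CH'$ by the co-opposite Hopf algebras $\CH^{\mathrm{cop}},\CH'^{\mathrm{cop}}$, whose antipodes are $S^{-1}$ and exist precisely because $S$ is invertible; the surjection $p$ is still a surjective Hopf algebra morphism $\CH^{\mathrm{cop}}\to\CH'^{\mathrm{cop}}$. There is an equivalence $\Phi\colon\Comod(\CH)\to\Comod(\CH^{\mathrm{cop}})$ sending $(V,\gamma_V)$ to $(V,(\id\otimes S^{-1})\circ\gamma_V)$, and likewise for $\CH'$; since $p$ commutes with antipodes these equivalences are compatible with $\Res$, hence (being equivalences relating left adjoints) with $\Ind$, and they reverse the order of tensor products of comodules, i.e.\ $\Phi(A\otimes B)\cong\Phi(B)\otimes\Phi(A)$. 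Applying the second isomorphism over $(\CH^{\mathrm{cop}},\CH'^{\mathrm{cop}})$ to the comodules $\Phi(M)$ and $\Phi(V)$, and translating everything back through $\Phi^{-1}$, then yields $\Ind^{\CH}_{\CH'}(\Res^{\CH}_{\CH'}V\otimes M)\cong V\otimes\Ind^{\CH}_{\CH'}M$.

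The main obstacle is the pair of Sweedler computations for $\theta$: they are elementary but must be carried out with care about the order of factors, since $\CH$ and $\CH'$ are noncommutative, and one must first pin down exactly the two $\CH'$-coactions on $M\otimes V\otimes\CH$ cutting out the two sides (as well as the residual $\CH$-coactions, which involve $S^{-1}$ as in the definition of $\Ind$). The co-opposite step is then purely formal, its only subtle points being the identification $\Phi(A\otimes B)\cong\Phi(B)\otimes\Phi(A)$ and the commutation $\Phi\circ\Ind\cong\Ind\circ\Phi$.
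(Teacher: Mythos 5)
The paper cites this lemma as ``well-known'' and gives no proof, so there is no in-paper argument to compare against; your job was to supply one, and you have done so correctly. The untwisting map $\theta(m\otimes v\otimes h)=\sum m\otimes v_{(0)}\otimes v_{(1)}h$ is the standard one for the tensor identity (projection formula), and the two Sweedler checks go through: $\theta$ intertwines the $\CH'$-coactions cutting out $\Ind^\CH_{\CH'}(M\otimes\Res^\CH_{\CH'}V)$ and $\Ind^\CH_{\CH'}(M)\otimes V$ inside $M\otimes V\otimes\CH$ (after the flip), and then, using $\sum S^{-1}(v_{(2)})v_{(1)}=\varepsilon(v)\cdot 1$, it also intertwines the residual $\CH$-coactions from the definition of $\Ind$, so it restricts to an isomorphism in $\Comod(\CH)$. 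Your observation that the analogous direct twist does not work for $\Ind^\CH_{\CH'}(\Res^\CH_{\CH'}V\otimes M)$ --- the $\CH'$-contribution of $v$ sits to the left of that of $m$ and cannot be slid past $m_{(1)}$ in a noncommutative $\CH'$ --- and that passing to the co-opposite Hopf algebras is the clean fix, is correct; this is indeed one of the places where the standing hypothesis of invertible antipodes is needed. One small slip in wording: $\Ind^\CH_{\CH'}$ is the \emph{right} adjoint of $\Res^\CH_{\CH'}$, not the left adjoint, so it is the uniqueness of right adjoints that yields $\Phi\circ\Ind\cong\Ind\circ\Phi$ from $\Phi\circ\Res\cong\Res\circ\Phi$. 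The argument is otherwise complete.
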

\subsection{}
Assume that $K$ is a field equipped with $\zeta\in K^\times$.
We consider the left exact functor
\[
\Ind=
\Ind_{\CO_{K,\zeta}(B)}^{\CO_{K,\zeta}(G)}:
\Comod(\CO_{K,\zeta}(B))
\to
\Comod(\CO_{K,\zeta}(G)).
\]
It is known that the abelian category 
$\Comod(\CO_{K,\zeta}(B))$ has enough injectives, and we have its right derived functors
\[
R^k\Ind:\Comod(\CO_{K,\zeta}(B))
\to
\Comod(\CO_{K,\zeta}(G))
\qquad(k\geqq0).
\]
For $\lambda\in X$ we denote by $K_\lambda=K{1}_\lambda$ the one dimensional $U_{K,\zeta}(\Gb)$-module given by
\[
h1_\lambda=\chi_\lambda(h)1_\lambda
\;\;(h\in U_{K,\zeta}(\Gh)),
\quad
z1_\lambda=\varepsilon(z)1_\lambda
\;\;(z\in U_{K,\zeta}(\Gn)).
\]
We also denote the corresponding right $\CO_{K,\zeta}(B)$-comodule by $K_\lambda=K{1}_\lambda$.
The following fact is standard (see \cite{APW}).
\begin{lemma}
\label{lem:Ind}
For $\lambda\in X$ we have 
\[
\Ind(K_\lambda)
\cong
\begin{cases}
\nabla_{K,\zeta}(\lambda)\quad&(\lambda\in X^+)
\\
0&(\lambda\notin X^+).
\end{cases}
\]
\end{lemma}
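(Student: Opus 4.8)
The plan is to realize $\Ind(K_\lambda)$ concretely inside $\CO_{K,\zeta}(G)$ and then to compare it with the dual Weyl module. Unwinding the definition of the induction functor, the surjection $\CO_{K,\zeta}(G)\to\CO_{K,\zeta}(B)$, and the inclusion \eqref{eq:tri-dual}, one identifies $\Ind(K_\lambda)$ with the $K$-subspace
\[
\{f\in\CO_{K,\zeta}(G)\mid fh=\chi_\lambda(h)f\ (h\in U_{K,\zeta}(\Gh)),\ fz=\varepsilon(z)f\ (z\in U_{K,\zeta}(\Gn))\}
\]
of $\CO_{K,\zeta}(G)\subset U_{K,\zeta}(\Gg)^*$, where we use the right regular $U_{K,\zeta}(\Gg)$-action (for the appropriate normalization; this identification is routine). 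This subspace is stable under the left regular action, hence is an object of $\Comod(\CO_{K,\zeta}(G))$; in other words, $\Ind(K_\lambda)$ is the space of matrix coefficients that are right $U_{K,\zeta}(\Gb)$-semiinvariant of weight $\lambda$. This is the quantum analogue, at a root of unity, of the classical realization of a costandard (dual Weyl) module inside the coordinate algebra of $G$, and the lemma is due to Andersen--Polo--Wen \cite{APW}; what follows is a plan for recovering their argument here.

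First I would construct a natural injection $\nabla_{K,\zeta}(\lambda)\hookrightarrow\Ind(K_\lambda)$ for $\lambda\in X^+$. Put $M=\nabla_{K,\zeta}(\lambda)$. Base-changing the $\BA$-duality $\nabla_\BA(\lambda)\cong\Hom_\BA(\Delta^*_\BA(\lambda),\BA)$ identifies the right $U_{K,\zeta}(\Gg)$-module $M^*=\Hom_K(M,K)$ with the right Weyl module $\Delta^*_{K,\zeta}(\lambda)=v^*_\lambda U_{K,\zeta}(\Gg)$; let $\xi\in M^*$ be the element corresponding to the generator $v^*_\lambda$. Since $\lambda$ is the highest weight of $V_\BF(\lambda)$, $\xi$ is a right $U_{K,\zeta}(\Gb)$-semiinvariant of weight $\lambda$, so Proposition \ref{prop:Hdual2} (with $\CH=U_{K,\zeta}(\Gg)$, $\CT=U_{K,\zeta}(\Gh)$, $\Upsilon=X$) shows that $v\mapsto\Phi_M(v\otimes\xi)$ is a homomorphism $\nabla_{K,\zeta}(\lambda)\to\Ind(K_\lambda)$ of left $U_{K,\zeta}(\Gg)$-modules; it is injective because $\xi$ generates $M^*$ over $U_{K,\zeta}(\Gg)$ and $M^*$ separates the points of $M$. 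For $\lambda\notin X^+$ I would prove $\Ind(K_\lambda)=0$ by a weight argument: if $0\ne f\in\Ind(K_\lambda)$, then by the triangular decomposition the right submodule $fU_{K,\zeta}(\Gg)$ equals $fU_{K,\zeta}(\Gn^+)$, which is a finite-dimensional integrable right $U_{K,\zeta}(\Gg)$-module annihilated on the right by every $f_i^{(n)}$ with $n>0$; inspecting the right $\Gsl_2$-subalgebra attached to each $i\in I$ then forces $\langle\lambda,\alpha_i^\vee\rangle\geqq0$, a contradiction.

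The remaining point, surjectivity of $\nabla_{K,\zeta}(\lambda)\hookrightarrow\Ind(K_\lambda)$ for $\lambda\in X^+$, is the main obstacle and the place where the root-of-unity representation theory genuinely enters. It suffices to show that $\Ind(K_\lambda)$ is finite-dimensional with the same formal character as $\nabla_{K,\zeta}(\lambda)$, namely Weyl's character for $\lambda$; the injection of the previous paragraph is then automatically an isomorphism. For this I would invoke the quantum analogue of Kempf vanishing, $R^{k}\Ind(K_\lambda)=0$ for $k>0$ (with $\lambda\in X^+$), together with the computation of the Euler characteristic $\sum_k(-1)^k\ch R^k\Ind(K_\lambda)$, both carried out in \cite{APW}. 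Equivalently, one may reduce to the ring $\BA$: there $\Ind_\BA(K_\lambda)$ is the $\BA$-lattice of $V_\BF(\lambda)$ dual to the Weyl-module lattice, hence is $\nabla_\BA(\lambda)$, and quantum Kempf vanishing makes $\Ind$ commute with the base change $\BA\to K$, giving $\Ind(K_\lambda)\cong K\otimes_\BA\nabla_\BA(\lambda)=\nabla_{K,\zeta}(\lambda)$ directly. In either formulation the construction of the comparison map and the non-dominant vanishing are formal, while the essential input — vanishing of higher induction, equivalently the good filtration of $\CO_{K,\zeta}(G)$ — is what is borrowed from \cite{APW}.
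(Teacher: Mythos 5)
The paper itself offers no proof of this lemma; it simply asserts the result is standard and cites \cite{APW}. Your proposal is a correct and sensible reconstruction of the argument, and the three steps you isolate --- the concrete realization of $\Ind(K_\lambda)$ as a space of right $U_{K,\zeta}(\Gb)$-semiinvariant matrix coefficients (this agrees with the paper's later remark that $O_{K,\zeta}(N\backslash G;\lambda)\cong\Ind(K_\lambda)$), the construction of the injection $\nabla_{K,\zeta}(\lambda)\hookrightarrow\Ind(K_\lambda)$ via $v\mapsto\Phi_{\nabla_{K,\zeta}(\lambda)}(v\otimes v^*_\lambda)$ (the paper's \eqref{eq:Alambda}), and the $\Gsl_2$/integrability argument for the non-dominant vanishing --- are all correct and formal.

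Two remarks on your treatment of the essential input. First, for the surjectivity step you appeal to quantum Kempf vanishing $R^k\Ind(K_\lambda)=0$ ($k>0$, $\lambda\in X^+$). In the paper this vanishing is stated as Proposition \ref{prop:Ind}, placed \emph{after} the present lemma, and attributed to Ryom-Hansen \cite{RH} (APW established it only under restrictions on the order of the root of unity; the general case relies on crystal base theory). So if you cite \cite{APW} for the vanishing at an arbitrary root of unity you are claiming slightly more than that reference delivers, and you are also inverting the logical order the paper intends. Second, there is in fact a softer route to surjectivity that avoids Kempf vanishing altogether and better matches the paper's treatment of this as the ``easier'' of the two facts: any $f\in\Ind(K_\lambda)$ has the form $\Phi_M(v\otimes m^*)$ with $m^*\in M^*_\lambda$ a right highest-weight vector in a finite-dimensional integrable right module, and the universal property of the right Weyl module $\Delta^*_{K,\zeta}(\lambda)$ gives a surjection $\Delta^*_{K,\zeta}(\lambda)\twoheadrightarrow m^*U_{K,\zeta}(\Gg)$ sending $v^*_\lambda\mapsto m^*$; dualizing and restricting $v$ then exhibits $f$ as $\Phi_{\nabla_{K,\zeta}(\lambda)}(\bar v\otimes v^*_\lambda)$ for some $\bar v\in\nabla_{K,\zeta}(\lambda)$. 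This replaces the cohomology vanishing by the (also nontrivial, but independent) presentation of the Weyl module at a root of unity, which is where the input from \cite{APW} is really located. Either route is acceptable; just be aware that by invoking $R^{>0}\Ind=0$ you are using a bigger hammer than the paper's framing suggests, and the correct attribution for that hammer at arbitrary $\ell$ is Ryom-Hansen.
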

The following Kempf type vanishing theorem is a consequence of the deep theory of crystal bases due to Kashiwara.
\begin{proposition}[Ryom-Hansen \cite{RH}]
\label{prop:Ind}
For $\lambda\in X^+$ and $k\ne0$
we have
$
R^k\Ind(K_\lambda)=0
$.
\end{proposition}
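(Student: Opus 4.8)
The plan is to follow the method of Ryom-Hansen \cite{RH}: first reduce the vanishing to rank-one computations by parabolic induction, and then supply the remaining input — a substitute for the Frobenius splitting of Schubert varieties, which is unavailable at a root of unity — from Kashiwara's theory of crystal bases.

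\textbf{Step 1: reduction to rank one.} For $i\in I$ let $U_{K,\zeta}(\Gp_i)=\langle U_{K,\zeta}(\Gb),\,e_i^{(n)}\mid n\in\BZ_{\geqq0}\rangle$ be the quantized minimal parabolic subalgebra and $\CO_{K,\zeta}(P_i)=U_{K,\zeta}(\Gp_i)^*_{U_{K,\zeta}(\Gh),X}$ the associated coordinate Hopf algebra, so that there are surjective Hopf algebra homomorphisms $\CO_{K,\zeta}(G)\twoheadrightarrow\CO_{K,\zeta}(P_i)\twoheadrightarrow\CO_{K,\zeta}(B)$. By transitivity of induction one has $\Ind=\Ind_{\CO_{K,\zeta}(P_i)}^{\CO_{K,\zeta}(G)}\circ\Ind_{\CO_{K,\zeta}(B)}^{\CO_{K,\zeta}(P_i)}$, hence a Grothendieck spectral sequence
\[
R^a\Ind_{\CO_{K,\zeta}(P_i)}^{\CO_{K,\zeta}(G)}\bigl(R^b\Ind_{\CO_{K,\zeta}(B)}^{\CO_{K,\zeta}(P_i)}(K_\lambda)\bigr)\ \Longrightarrow\ R^{a+b}\Ind(K_\lambda).
\]
The inner functor only involves the rank-one subsystem $\{\pm\alpha_i\}$, so it is computed in quantized $\Gsl_2$: a direct calculation valid for every $\zeta\in K^\times$ (cf.\ \cite{APW}) gives $R^b\Ind_{\CO_{K,\zeta}(B)}^{\CO_{K,\zeta}(P_i)}(K_\mu)=0$ for $b>0$ whenever $\langle\mu,\alpha_i^\vee\rangle\geqq0$, while $\Ind_{\CO_{K,\zeta}(B)}^{\CO_{K,\zeta}(P_i)}(K_\mu)$ is then the rank-one dual Weyl module (of dimension $\langle\mu,\alpha_i^\vee\rangle+1$). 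Consequently, for $\lambda\in X^+$ the spectral sequence collapses and $R^k\Ind(K_\lambda)\cong R^k\Ind_{\CO_{K,\zeta}(P_i)}^{\CO_{K,\zeta}(G)}\!\bigl(\Ind_{\CO_{K,\zeta}(B)}^{\CO_{K,\zeta}(P_i)}(K_\lambda)\bigr)$ for all $k$.

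\textbf{Step 2: Demazure modules via crystal bases.} Iterating this reduction along a reduced expression $w_0=s_{i_1}\cdots s_{i_N}$ and tracking the intermediate $U_{K,\zeta}(\Gb)$-modules with the tensor identity of Lemma~\ref{lem:IRT}, one is led to the quantized Demazure modules $D_w(\lambda)$, which interpolate between $K_\lambda$ (for $w=1$) and $\Ind(K_\lambda)=\nabla_{K,\zeta}(\lambda)$ (for $w=w_0$, by Lemma~\ref{lem:Ind}) and are the representation-theoretic avatars of $H^0$ of the line bundle $\CL_\lambda$ on the Schubert varieties. The facts required — and \emph{not} obtainable from the rank-one bookkeeping alone — are that $D_w(\lambda)$ is independent of the chosen reduced expression, that for $\ell(s_iw)>\ell(w)$ there is a short exact sequence of $U_{K,\zeta}(\Gb)$-modules relating $D_{s_iw}(\lambda)$, $D_w(\lambda)$ and rank-one induced modules (refining the Demazure recursion for characters), and that the rank-one inductions occurring in it have no higher cohomology. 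Over $\BA=\BZ[q,q^{-1}]$ this is precisely Kashiwara's theory: $\Delta_\BA(\lambda)$ carries a crystal (global) basis, the Demazure modules are the $\BA$-spans of the Demazure crystals $B_w(\lambda)\subset B(\lambda)$, and Kashiwara's refined Demazure character formula, together with the compatibility of the global basis with the rank-one induction functors, produces the exact sequences above as sequences of free $\BA$-modules; base change along $q\mapsto\zeta$ delivers them over $K$ for every $\zeta$. Assembling this package is what Ryom-Hansen carries out.

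\textbf{Step 3: conclusion, and the main obstacle.} Granting Step 2, the proposition follows by a bookkeeping induction along the reduced word $s_{i_1},\dots,s_{i_N}$: at each stage one applies the appropriate functor $\Ind_{\CO_{K,\zeta}(P_{i_j})}^{\CO_{K,\zeta}(G)}$ to the Demazure short exact sequence and combines the rank-one vanishing of Step 1 with the transitivity spectral sequences to propagate $R^{>0}\Ind=0$ from $D_w(\lambda)$ to $D_{s_{i_j}w}(\lambda)$, arriving at $R^k\Ind(K_\lambda)=R^k\Ind(D_{w_0}(\lambda))=0$ for $k>0$. I expect the genuinely hard part to be exactly the Demazure-module package of Step 2: classically its reduced-expression independence and exact sequences are consequences of the Frobenius splitting of Schubert varieties — equivalently of $R\pi_*\CO=\CO$ for a Bott--Samelson resolution — and at a root of unity no such splitting exists. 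Kashiwara's crystal basis theory is the replacement, and making it interact correctly with the comodule-theoretic induction functors $\Ind_{\CO_{K,\zeta}(P_i)}^{\CO_{K,\zeta}(G)}$ uniformly in $\zeta\in K^\times$ is the technical heart of the proof; everything else is formal homological algebra and the rank-one $\Gsl_2$ computation.
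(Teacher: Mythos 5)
The paper offers no proof of this proposition: it is quoted from Ryom--Hansen \cite{RH}, preceded only by the one-sentence remark that the theorem is ``a consequence of the deep theory of crystal bases due to Kashiwara.'' There is therefore no internal argument to compare against. Your reconstruction --- minimal-parabolic Grothendieck spectral sequences, a Demazure-module package whose reduced-expression independence and short exact sequences are supplied by Kashiwara's Demazure crystals, and an inductive assembly along a reduced word for $w_0$ --- is a reasonable high-level account of the strategy of the cited paper, and you are right that the technical heart, and the replacement for Frobenius splitting, is the crystal-base control of Demazure modules uniformly over $\BA=\BZ[q,q^{-1}]$ and hence after every base change $q\mapsto\zeta$. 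Two caveats on the sketch itself, though. First, Step~1 as written is not a reduction to rank one: the outer functor $\Ind_{\CO_{K,\zeta}(P_i)}^{\CO_{K,\zeta}(G)}$ still involves the whole group, so the genuine reduction is achieved only through the iteration you describe in Step~3. Second, the decisive mechanism --- which you gesture at but never make explicit --- is quantitative: in rank one the Euler characteristic of $\Ind_{\CO_{K,\zeta}(B)}^{\CO_{K,\zeta}(P_i)}$ applied to an intermediate module is fixed by the character, and Kashiwara's Demazure character formula, established over $\BA$ and preserved under base change, shows that $\dim R^0$ already accounts for the full Euler characteristic, forcing $R^1=0$ at each stage. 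It is precisely this dimension count that propagates the higher vanishing along the reduced word, and without naming it your Step~2 package remains a list of desiderata rather than a proof.
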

\section{Homogeneous coordinate algebras}
\subsection{}
For a field $K$ we set
\begin{equation}
\overline{O}_K(N\backslash G)
=\{f\in \overline{O}_K
(G)\mid
fz=\varepsilon(z)f\;(z\in \overline{U}_K(\Gn))\}.
\end{equation}
It is a subalgebra of $\overline{O}_K
(G)$ and a $(\overline{U}_K(\Gg), \overline{U}_K(\Gh))$-bimodule.
Set
\[
\overline{O}_K
(N\backslash G;\lambda)
=
\{f\in \overline{O}_K
(N\backslash G)
\mid
fh=\chi_\lambda(h)f\;(h\in \overline{U}_K(\Gh))\}
\]
for $\lambda\in X^+$.
It is known that
\begin{equation}
\overline{O}_K
(N\backslash G)
=
\bigoplus_{\lambda\in X^+}
\overline{O}_K
(N\backslash G;\lambda).
\end{equation}
Moreover, we have an isomorphism
\begin{equation}
\label{eq:oAlambda}
\overline{\nabla}_{K}(\lambda)\cong
\overline{O}_{K}(N\backslash G;\lambda)
\qquad
(v\leftrightarrow
\Phi_{\overline{\nabla}_{K}(\lambda)}
(v\otimes \overline{v}^{*}_\lambda))
\end{equation}
of left $\overline{U}_{K}(\Gg)$-modules.
Here, the image of 
$\overline{v}^*_\lambda\in
\overline{\Delta}^*_\BZ(\lambda)$
in 
$\overline{\Delta}^*_K(\lambda)=
(\overline{\nabla}_K(\lambda))^*$ is denoted by $\overline{v}^*_\lambda$ by abuse of the notation.
By
\begin{equation}
\label{eq:oA-mult}
\overline{O}_K
(N\backslash G; \lambda)\;
\overline{O}_K
(N\backslash G; \mu)
\subset
\overline{O}_K
(N\backslash G; \lambda+\mu)
\qquad(\lambda, \mu\in X^+)
\end{equation}
$\overline{O}_K
(N\backslash G)$ turns out to be a commutative $K$-algebra graded by the abelian group $X$.
Under the identification \eqref{eq:oAlambda} 
we obtain 
a non-zero homomorphism
\begin{equation}
\label{eq:onn}
\overline{\nabla}_{K}(\lambda)
\otimes
\overline{\nabla}_{K}(\mu)
\to
\overline{\nabla}_{K}(\lambda+\mu)
\end{equation}
of left $\overline{U}_{K}(\Gg)$-modules
corresponding to \eqref{eq:oA-mult}.
By
\begin{align*}
&\Hom_{\overline{U}_{K}(\Gg)}
(\overline{\nabla}_{K}(\lambda)
\otimes
\overline{\nabla}_{K}(\mu)
,
\overline{\nabla}_{K}(\lambda+\mu))
\\
\cong&
\Hom_{\overline{U}_{K}(\Gg)^{\op}}
(
\overline{\Delta}^*_{K}(\lambda+\mu),
\overline{\Delta}^*_{K}(\lambda)
\otimes
\overline{\Delta}^*_{K}(\mu)) 
\cong K
\end{align*}
\eqref{eq:onn} is the unique (up to a scalar multiple) non-zero homomorphism 
of $\overline{U}_{K}(\Gg)$-modules.

\subsection{}
Let $K$ be a field equipped with $\zeta\in K^\times$.
We set
\begin{equation}
O_{K,\zeta}(N\backslash G)
=\{f\in O_{K,\zeta}(G)\mid
fz=\varepsilon(z)f\;(z\in U_{K,\zeta}(\Gn))\}.
\end{equation}
It is a subalgebra of $O_{K,\zeta}(G)$ as well as a $(U_{K,\zeta}(\Gg),U_{K,\zeta}(\Gh))$-bimodule.
By the definition of $O_{K,\zeta}(G)$ it is easily seen that
$O_{K,\zeta}(N\backslash G)$ is a direct sum of subspaces 
\[
O_{K,\zeta}(N\backslash G;\lambda)
=
\{f\in O_{K,\zeta}(N\backslash G)
\mid
fh=\chi_\lambda(h)f\;(h\in U_{K,\zeta}(\Gh))\}
\]
for $\lambda\in X$.
Note that we have
$
O_{K,\zeta}(N\backslash G;\lambda)
\cong\Ind(K_\lambda)
$
by the definition of $\Ind$.
Hence by Proposition \ref{lem:Ind} we have 
$
O_{K,\zeta}(N\backslash G;\lambda)\ne\{0\}$ only if $\lambda\in X^+$, and hence 
\begin{equation}
O_{K,\zeta}(N\backslash G)
=
\bigoplus_{\lambda\in X^+}
O_{K,\zeta}(N\backslash G;\lambda).
\end{equation}
Moreover, we have
$O_{K,\zeta}(N\backslash G;\lambda)
\cong \nabla_{K,\zeta}(\lambda)$ 
 for $\lambda\in X^+$.
More precisely, we have an isomorphism
\begin{equation}
\label{eq:Alambda}
\nabla_{K,\zeta}(\lambda)\cong
O_{K,\zeta}(N\backslash G;\lambda)
\qquad
(v\leftrightarrow
\Phi_{\nabla_{K,\zeta}(\lambda)}
(v\otimes v^{*}_\lambda))
\end{equation}
of $U_{K,\zeta}(\Gg)$-modules.
Here, the image of 
${v}^*_\lambda\in
{\Delta}^*_\BA(\lambda)$
in 
${\Delta}^*_{K,\zeta}(\lambda)=
(\nabla_{K,\zeta}(\lambda))^*$ is denoted by ${v}^*_\lambda$ by abuse of the notation.

By 
\begin{equation}
\label{eq:A-mult}
O_{K,\zeta}(N\backslash G; \lambda)\;
O_{K,\zeta}(N\backslash G; \mu)
\subset
O_{K,\zeta}(N\backslash G; \lambda+\mu)
\qquad(\lambda, \mu\in X^+)
\end{equation}
$O_{K,\zeta}(N\backslash G)$ turns out to be a $K$-algebra graded by the abelian group $X$.
Similarly to the case of $\overline{O}_K(N\backslash G)$, 
the multiplication 
\[
O_{K,\zeta}(N\backslash G; \lambda)\otimes
O_{K,\zeta}(N\backslash G; \mu)
\to
O_{K,\zeta}(N\backslash G; \lambda+\mu)
\]
corresponds to the unique (up to a non-zero scalar multiple) non-zero
homomorphism
\begin{equation}
\label{eq:nn}
\nabla_{K,\zeta}(\lambda)
\otimes
\nabla_{K,\zeta}(\mu)
\to
\nabla_{K,\zeta}(\lambda+\mu)
\end{equation}
of left $U_{K,\zeta}(\Gg)$-modules
under the identification \eqref{eq:Alambda}.

\subsection{}
Assume that $\zeta\in K^\times$ has the multiplicative order $\ell<\infty$.
Define a subalgebra 
$O_{K,\zeta}({}^\sharp N\backslash {}^\sharp G)$ 
of $O_{K,\zeta}({}^\sharp G)$ and its subspace 
$O_{K,\zeta}({}^\sharp N\backslash {}^\sharp G;\lambda)$ 
for $\lambda\in {}^\sharp X$ 
similarly to the case of $O_{K,\zeta}(G)$, 
so that
\[
O_{K,\zeta}({}^\sharp N\backslash {}^\sharp G)
=
\bigoplus_{\lambda\in{}^\sharp X^+}
O_{K,\zeta}({}^\sharp N\backslash {}^\sharp G;\lambda).
\]
It is easily seen that the embedding
\eqref{eq:embA} induces the embedding
\begin{equation}
O_{K,\zeta}({}^\sharp N\backslash {}^\sharp G)
\subset
O_{K,\zeta}(N\backslash G)
\end{equation}
of algebras so that
$O_{K,\zeta}({}^\sharp N\backslash {}^\sharp G;\lambda)
\subset
O_{K,\zeta}(N\backslash G;\lambda)$ 
for $\lambda\in{}^\sharp X$.
Note also that we have an isomorphism
$
O_{K,\zeta}({}^\sharp G)\cong \overline{O}_K({}^\sharp G)
$
of $\overline{U}_{K}({}^\sharp \Gg)$-bimodules
by Proposition \ref{prop:O1}.
It induces 
an isomorphism 
\begin{equation}
O_{K,\zeta}({}^\sharp N\backslash{}^\sharp G)\cong \overline{O}_K({}^\sharp N\backslash{}^\sharp G)
\end{equation}
of 
left $\overline{U}_{K}({}^\sharp \Gg)$-modules.
\begin{proposition}
\label{prop:XY}
For any $\mu\in{}^\sharp X^+$ there exists some $N\geqq0$ such that for $\lambda\in X^+$ satisfying $\langle\lambda,\alpha_i^\vee\rangle\geqq N$ for any 
$i\in I$ we have
\[
O_{K,\zeta}(N\backslash G;\lambda)
O_{K,\zeta}({}^\sharp N\backslash{}^\sharp G;\mu)
=
O_{K,\zeta}(N\backslash G;\lambda+\mu).
\]
\end{proposition}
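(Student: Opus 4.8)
The plan is to realise the multiplication map through the induction functor and then to kill the obstruction to its surjectivity by the Kempf vanishing theorem. By \eqref{eq:Alambda} and the identification $O_{K,\zeta}({}^\sharp N\backslash{}^\sharp G;\mu)\cong{}^\sharp\overline{\nabla}_K(\mu)$ (regarded as a $U_{K,\zeta}(\Gg)$-module via $\CF$, sitting inside $O_{K,\zeta}(N\backslash G;\mu)\cong\nabla_{K,\zeta}(\mu)$), the assertion is equivalent to the surjectivity, for $\lambda\in X^+$ with all $\langle\lambda,\alpha_i^\vee\rangle$ large, of the homomorphism
\[
m_{\lambda,\mu}\colon\nabla_{K,\zeta}(\lambda)\otimes{}^\sharp\overline{\nabla}_K(\mu)\longrightarrow\nabla_{K,\zeta}(\lambda+\mu)
\]
of left $U_{K,\zeta}(\Gg)$-modules obtained by restricting \eqref{eq:nn} along $\nabla_{K,\zeta}(\lambda)\otimes{}^\sharp\overline{\nabla}_K(\mu)\hookrightarrow\nabla_{K,\zeta}(\lambda)\otimes\nabla_{K,\zeta}(\mu)$, since $\Image m_{\lambda,\mu}=O_{K,\zeta}(N\backslash G;\lambda)\,O_{K,\zeta}({}^\sharp N\backslash{}^\sharp G;\mu)$.

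First I would produce $m_{\lambda,\mu}$ (up to a nonzero scalar) by induction. Because $U_{K,\zeta}(\Gn)$ strictly lowers weights, $Q:=\bigoplus_{\nu\neq\mu}{}^\sharp\overline{\nabla}_K(\mu)_\nu$ is a $U_{K,\zeta}(\Gb)$-submodule of ${}^\sharp\overline{\nabla}_K(\mu)$, giving an exact sequence $0\to Q\to{}^\sharp\overline{\nabla}_K(\mu)\to K_\mu\to 0$ of $U_{K,\zeta}(\Gb)$-modules. Tensoring it on the left with $K_\lambda$ (so $K_\lambda\otimes K_\mu\cong K_{\lambda+\mu}$), applying $\Ind=\Ind_{\CO_{K,\zeta}(B)}^{\CO_{K,\zeta}(G)}$, and using the tensor identity $\Ind(K_\lambda\otimes\Res V)\cong\Ind(K_\lambda)\otimes V$ of Lemma~\ref{lem:IRT} with $V={}^\sharp\overline{\nabla}_K(\mu)$, together with Lemma~\ref{lem:Ind}, the long exact sequence contains
\[
\nabla_{K,\zeta}(\lambda)\otimes{}^\sharp\overline{\nabla}_K(\mu)\xrightarrow{\ \pi\ }\nabla_{K,\zeta}(\lambda+\mu)\longrightarrow R^1\Ind(K_\lambda\otimes Q).
\]
A (routine but somewhat lengthy) check, using naturality of the tensor identity in $V$ and the matching \eqref{eq:extremalA} of the extremal $\mu$-weight lines of ${}^\sharp\overline{\nabla}_K(\mu)$ and of $\nabla_{K,\zeta}(\mu)$, identifies $\pi$ with $m_{\lambda,\mu}$; alternatively, running the same construction with $\nabla_{K,\zeta}(\mu)$ in place of ${}^\sharp\overline{\nabla}_K(\mu)$ produces a nonzero — hence, by \eqref{eq:nn}, essentially unique — multiplication homomorphism $\nabla_{K,\zeta}(\lambda)\otimes\nabla_{K,\zeta}(\mu)\to\nabla_{K,\zeta}(\lambda+\mu)$, of which $\pi$ is the restriction. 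Thus it suffices to prove $R^1\Ind(K_\lambda\otimes Q)=0$ for $\lambda$ deep.

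This last vanishing is the crux. The weights of ${}^\sharp\overline{\nabla}_K(\mu)$ are those of the classical dual Weyl module of ${}^\sharp G$ with highest weight $\mu$, so they lie in $\mu-Q^+$ with bounded depth: there is $d=d(\mu)$ with $\mu-\nu=\sum_i m_i\alpha_i$ and $\sum_i m_i\le d$ for every weight $\nu$ of $Q$. Ordering the weights of $Q$ by height and using that $U_{K,\zeta}(\Gn)$ lowers height, $K_\lambda\otimes Q$ has a filtration by $U_{K,\zeta}(\Gb)$-submodules with one-dimensional subquotients $K_{\lambda+\nu}$, $\nu$ a weight of $Q$. If $\langle\lambda,\alpha_i^\vee\rangle$ exceeds a bound depending only on $d(\mu)$ and the $a_{ij}$, then $\langle\lambda+\nu,\alpha_i^\vee\rangle\ge 0$ for all $i$ and all such $\nu$, i.e.\ $\lambda+\nu\in X^+$, and hence $R^1\Ind(K_{\lambda+\nu})=0$ by Proposition~\ref{prop:Ind}. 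Running the long exact sequences along the filtration then gives $R^1\Ind(K_\lambda\otimes Q)=0$, so $\pi=m_{\lambda,\mu}$ is surjective and the proposition follows, with $N$ depending only on $\mu$. The hard part is precisely making this work: everything rests on the Kempf-type vanishing of Proposition~\ref{prop:Ind} being available for every weight $\lambda+\nu$ occurring in $K_\lambda\otimes Q$, and it is this requirement that forces $\lambda$ to be deep in $X^+$ — whereas the reduction in the previous two paragraphs is a formal manipulation of the induction functor.
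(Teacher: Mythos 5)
Your argument coincides in essence with the paper's: both identify the multiplication map with an induction homomorphism $\Ind(K_\lambda\otimes{}^\sharp\nabla_{K,\zeta}(\mu))\to\Ind(K_{\lambda+\mu})$ via Lemma~\ref{lem:IRT} and Lemma~\ref{lem:Ind}, both kill the cokernel by filtering the kernel of $K_\lambda\otimes{}^\sharp\nabla_{K,\zeta}(\mu)\twoheadrightarrow K_{\lambda+\mu}$ into one-dimensional $U_{K,\zeta}(\Gb)$-subquotients $K_{\lambda+\nu_j}$ and invoking the Kempf vanishing of Proposition~\ref{prop:Ind} for deep $\lambda$, and both pin down the resulting surjection as the multiplication map via the one-dimensionality (uniqueness up to scalar) of $\Hom_{U_{K,\zeta}(\Gg)}(\nabla(\lambda)\otimes{}^\sharp\nabla(\mu),\nabla(\lambda+\mu))$. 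The only cosmetic difference is that you phrase the kernel as $Q=\bigoplus_{\nu\neq\mu}{}^\sharp\overline{\nabla}_K(\mu)_\nu$ and use the classical sharp dual Weyl module (legitimately, by Proposition~\ref{prop:rep-pm1}), whereas the paper names the kernel $M_2$ and works with ${}^\sharp\nabla_{K,\zeta}(\mu)$; these are the same object under the identifications already established.
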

\begin{proof}
Recall 
\[
O_{K,\zeta}(N\backslash G;\lambda)
\cong
\nabla_{K,\zeta}(\lambda)\quad(\lambda\in X^+),
\quad
O_{K,\zeta}({}^\sharp N\backslash {}^\sharp G;\mu)
\cong
{}^\sharp \nabla_{K,\zeta}(\mu)\quad(\mu\in {}^\sharp X^+).
\]
Under this identification, the multiplication 
\[
O_{K,\zeta}(N\backslash G;\lambda)
\otimes
O_{K,\zeta}({}^\sharp N\backslash {}^\sharp G;\mu)
\to
O_{K,\zeta}(N\backslash G;\lambda+\mu)
\]
corresponds to a non-zero homomorphism 
\begin{equation}
\label{eq:nabla}
\nabla_{K,\zeta}(\lambda)
\otimes
{}^\sharp \nabla_{K,\zeta}(\mu)
\to
\nabla_{K,\zeta}(\lambda+\mu)
\end{equation}
of $U_{K,\zeta}(\Gg)$-modules, where 
${}^\sharp \nabla_{K,\zeta}(\mu)$ is regarded as a 
$U_{K,\zeta}(\Gg)$-module through the quantum Frobenius homomorphism
$\CF:U_{K,\zeta}(\Gg)\to{}^\sharp U_{K,\zeta}(\Gg)$.
Note that \eqref{eq:nabla} is obtained by taking the dual of the unique (up to a scalar multiple) non-zero homomorphism
\[
\Delta^*_{K,\zeta}(\lambda+\mu)
\to
\Delta^*_{K,\zeta}(\lambda)
\otimes
{}^\sharp \Delta^*_{K,\zeta}(\mu)
\]
of right $U_{K,\zeta}(\Gg)$-modules.

Assume 
$\langle\lambda,\alpha_i^\vee\rangle\gg0$ for any 
$i\in I$.
We have a filtration 
\[
K_\lambda\otimes {}^\sharp \nabla_{K,\zeta}(\mu)
=M_1\supset M_2\supset \dots\supset M_s\supset M_{s+1}=0
\]
of the $U_{K,\zeta}(\Gb)$-module $K_\lambda\otimes {}^\sharp \nabla_{K,\zeta}(\mu)$ such that $M_j/M_{j+1}\cong K_{\lambda+\nu_j}$, where $\nu_1=\mu$, $\nu_2$, \dots, $\nu_s$ are the weights of ${}^\sharp \nabla_{K,\zeta}(\mu)$ with multiplicity.
From the short exact sequence
\[
0\to M_2\to K_\lambda\otimes {}^\sharp \nabla_{K,\zeta}(\mu)\to K_{\lambda+\mu}\to 0
\]
we obtain an exact sequence
\[
\Ind(K_\lambda\otimes {}^\sharp \nabla_{K,\zeta}(\mu))
\to 
\Ind(K_{\lambda+\mu})
\to
R^1\Ind(M_2)
\]
of $U_{K,\zeta}(\Gg)$-modules.
By Proposition \ref{prop:Ind} and Lemma \ref{lem:IRT}
we have
\[
\Ind(K_\lambda\otimes {}^\sharp \nabla_{K,\zeta}(\mu))
\cong
\nabla_{K,\zeta}(\lambda)
\otimes
{}^\sharp \nabla_{K,\zeta}(\mu),
\qquad
\Ind(K_{\lambda+\mu})
\cong
\nabla_{K,\zeta}(\lambda+\mu).
\]
By our assumption on $\lambda$ we have
$\lambda+\nu_j\in X^+$ for any $j$, and hence 
$R^k\Ind(M_j/M_{j+1})=0$ for any $j\geqq2$, $k\geqq1$.
Therefore, we have $R^1\Ind(M_2)=0$.
We obtain a surjective homomorphism
\[
\nabla_{K,\zeta}(\lambda)
\otimes
{}^\sharp \nabla_{K,\zeta}(\mu)
\to
\nabla_{K,\zeta}(\lambda+\mu)
\]
of $U_{K,\zeta}(\Gg)$-modules.
Since \eqref{eq:nabla} is the unique (up to a scalar multiple) non-zero homomorphism of $U_{K,\zeta}(\Gg)$-modules, we obtain the surjectivity of \eqref{eq:nabla}.
\end{proof}

\section{quantized flag manifolds}
\subsection{}
We assume that $G$ is semisimple and simply-connected.
Namely, we assume
$Y=Q^\vee$, so that the canonical homomorphism $X\to\Hom_\BZ(Q^\vee,\BZ)$ is bijective. 
Let $K$ be a field, and
let $G_K$ be the split semisimple algebraic group defined over $K$ with the same root datum $(X,\Delta, Y,\Delta^\vee)$ as $G$.
We denote by $B_K$, $N_K$, $N_K^+$ the subgroups of $G_K$ similarly defined as $B$, $N$, $N^+$ respectively.
Set $\CB_K=B_K\backslash G_K$.
It is a projective  algebraic variety defined over $K$, called the 
flag variety.
Denote by $\Mod(\CO_{\CB_K})$ the category of quasi-coherent $\CO_{\CB_K}$-modules.
It can be described using  $\overline{O}_K
(N\backslash G)$ as follows.
Let  $\Mod_{\gr}(\overline{O}_K
(N\backslash G))$ be the category of graded $\overline{O}_K
(N\backslash G)$-modules.
We denote by $\Tor_{\gr}(
\overline{O}_K(N\backslash G))
$ the full subcategory of $\Mod_{\gr}(\overline{O}_K(N\backslash G))$ consisting of $M\in\Mod_{\gr}(\overline{O}_K
(N\backslash G))$ such that for any $m\in M$ there exists some positive integer $N$ satisfying
\[
\lambda\in X,\;\langle\lambda,\alpha_i^\vee\rangle\geqq N
\;\;
(\forall i\in I)
\;
\Longrightarrow
\;
\overline{O}_K(N\backslash G; \lambda)m=\{0\}.
\]
Note that
$\Tor_{\gr}(\overline{O}_K
(N\backslash G))$
is closed under taking subquotients and extensions in 
$\Mod_{\gr}(\overline{O}_K
(N\backslash G))$.
Then we have
\[
\Mod(\CO_{\CB_K})
\cong
\Mod_{\gr}(\overline{O}_K
(N\backslash G))/
\Tor_{\gr}(\overline{O}_K
(N\backslash G))
:=
\overline{\CP}^{-1}\Mod_{\gr}(\overline{O}_K
(N\backslash G)),
\]
where $\overline{\CP}$ consists of morphisms in $\Mod_{\gr}(\overline{O}_K
(N\backslash G))$ whose kernel and cokernel belong to $\Tor_{\gr}(\overline{O}_K
(N\backslash G))$, and 
$\overline{\CP}^{-1}\Mod_{\gr}(\overline{O}_K
(N\backslash G))$
denotes the localization of the category so that the morphisms in $\overline{\CP}$ turn out to be isomorphisms (see \cite{GZ}, \cite{P}).

For $w\in W$ and $\lambda\in X^+$ we set
\[
\overline{\sigma}_\lambda^w
=\Phi_{\overline{\nabla}_K(\lambda)}
(\overline{T}_w\overline{v}_{\lambda}
\otimes \overline{v}^*_\lambda)
\in \overline{O}_K(N\backslash G;\lambda)
\subset \overline{O}_K
(N\backslash G).
\]
Here, the images of 
$\overline{v}_\lambda
\in
\overline{\nabla}_\BZ(\lambda)
$
and
$\overline{v}^*_\lambda
\in
\overline{\Delta}_\BZ^*(\lambda)
$
in 
$
\overline{\nabla}_K(\lambda)
$
and
$
\overline{\Delta}_K^*(\lambda)
=
(\overline{\nabla}_K(\lambda))^*
$
are denoted by 
$\overline{v}_\lambda$
and
$\overline{v}^*_\lambda$
respectively.
We have
\[
\overline{\sigma}_\lambda^w
\overline{\sigma}_\mu^w
=
\overline{\sigma}_{\lambda+\mu}^w
\qquad
(\lambda, \mu\in X^+, w\in W),
\]
and hence 
$\overline{\CS}_K^w
=
\{\overline{\sigma}_\lambda^w\mid\lambda\in X^+\}
$ is a homogeneous multiplicative subset of the commutative graded ring $\overline{O}_K(N\backslash G)$.
Therefore, the localization 
$(\overline{\CS}_K^w)^{-1}\overline{O}_K(N\backslash G)$ turns out to be a commutative graded ring graded by $X$.
Set
\[
\overline{O}_K(\CB^w)
=
((\overline{\CS}_K^w)^{-1}
\overline{O}_K(N\backslash G))
(0).
\]
This commutative ring is naturally identified with the coordinate algebra of the affine open subset
$
\CB^w_K=B_K\backslash B_KN_K^+w^{-1}
$
of $\CB_K$.
In particular, the category $\Mod(\CO_{\CB^w_K})$ of quasi-coherent $\CO_{\CB^w_K}$-modules is isomorphic to the category 
$\Mod(
\overline{O}_K(\CB^w))$ 
of $\overline{O}_K(\CB^w)$-modules.
The natural exact functor 
\[
\overline{\res}_w:\Mod(\CO_{\CB_K})\to\Mod(\CO_{\CB^w_K})
\]
given by the restriction of quasi-coherent $\CO$-modules is induced by
\[
\Mod_{\gr}(\overline{O}_K
(N\backslash G))
\ni
M
\mapsto
((\overline{\CS}_K^w)^{-1}M)(0)
\in
\Mod(
\overline{O}_K(\CB^w)).
\]

By $\CB_K=\bigcup_{w\in W}\CB^w_K$ we have obviously
\begin{equation}
M\in \Mod(\CO_{\CB_K}),\;
\overline{\res}_w(M)=0\;(\forall w\in W)
\;
\Longrightarrow
\;
M=0.
\end{equation}
It is easily seen that this is equivalent to the following.
\begin{lemma}
\label{lem:A}
For any $\mu\in X^+$ we have  
\begin{equation}
\overline{O}_{K}(N\backslash G;\lambda+\mu)=\sum_{w\in W}
\overline{O}_{K}(N\backslash G;\lambda)
\,
\overline{\sigma}_\mu^w
\end{equation}
for $\lambda\in X^+$ such that $\langle\lambda,\alpha_i^\vee\rangle\gg0$ for any $i\in I$.
\end{lemma}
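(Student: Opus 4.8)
The plan is to obtain the Lemma as a direct translation of the patching property displayed immediately before it, namely that an object of $\Mod(\CO_{\CB_K})$ which restricts to $0$ on every $\CB^w_K$ must be zero. That property is the only non-formal ingredient, and it is classical, being equivalent to $\CB_K=\bigcup_{w\in W}\CB^w_K$; everything else is bookkeeping with the graded localizations $(\overline{\CS}^w_K)^{-1}\overline{O}_K(N\backslash G)$ and with the subcategory $\Tor_{\gr}(\overline{O}_K(N\backslash G))$. (In fact the argument below reverses as well, which is the asserted equivalence, but only this direction is needed.)

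Fix $\mu\in X^+$ and abbreviate $R=\overline{O}_K(N\backslash G)$, a commutative $X$-graded $K$-algebra. First I would introduce the graded ideal $I=\sum_{w\in W}R\,\overline{\sigma}^w_\mu\subset R$ and consider the object of $\Mod(\CO_{\CB_K})=\Mod_{\gr}(R)/\Tor_{\gr}(R)$ represented by the cyclic graded module $R/I$. The key point is that its restriction $\overline{\res}_w(R/I)$, which by the description of $\overline{\res}_w$ is the $\overline{O}_K(\CB^w)$-module $((\overline{\CS}^w_K)^{-1}(R/I))(0)$, vanishes for every $w\in W$: since $\mu\in X^+$ we have $\overline{\sigma}^w_\mu\in\overline{\CS}^w_K$, while at the same time $\overline{\sigma}^w_\mu=1\cdot\overline{\sigma}^w_\mu\in I$; hence the image of $\overline{\sigma}^w_\mu$ in the localized ideal $(\overline{\CS}^w_K)^{-1}I$ is a unit of $(\overline{\CS}^w_K)^{-1}R$, so $(\overline{\CS}^w_K)^{-1}I=(\overline{\CS}^w_K)^{-1}R$ and therefore $(\overline{\CS}^w_K)^{-1}(R/I)=0$.

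By the patching property we then conclude that $R/I\in\Tor_{\gr}(R)$. Applying the definition of $\Tor_{\gr}(R)$ to the single generator $\bar1\in(R/I)_0$, there exists $N>0$ such that $R_\eta\bar 1=\{0\}$, that is $R_\eta=I_\eta$, for every $\eta\in X$ with $\langle\eta,\alpha_i^\vee\rangle\geqq N$ for all $i\in I$. Now take $\lambda\in X^+$ with $\langle\lambda,\alpha_i^\vee\rangle\geqq N$ for all $i\in I$; since $\mu\in X^+$, the element $\eta=\lambda+\mu$ still satisfies $\langle\eta,\alpha_i^\vee\rangle\geqq N$. Using that each $\overline{\sigma}^w_\mu$ is homogeneous of degree $\mu$ we have $(R\,\overline{\sigma}^w_\mu)_{\lambda+\mu}=R_\lambda\,\overline{\sigma}^w_\mu$, whence
\[
\overline{O}_K(N\backslash G;\lambda+\mu)=R_{\lambda+\mu}=I_{\lambda+\mu}=\sum_{w\in W}R_\lambda\,\overline{\sigma}^w_\mu=\sum_{w\in W}\overline{O}_K(N\backslash G;\lambda)\,\overline{\sigma}^w_\mu,
\]
which is exactly the claimed identity.

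I do not expect a real obstacle here: the geometric content, the covering $\CB_K=\bigcup_{w}\CB^w_K$, is already granted by the excerpt, and the rest is formal manipulation of the graded category. The only step deserving a moment of attention is the passage from ``$\bar1$ is annihilated in all sufficiently dominant degrees'' to ``$R/I$ vanishes in all sufficiently dominant degrees''; this is immediate precisely because $R/I$ is cyclic over $R$, so that no Noetherian or finite-generation hypothesis on $R$ needs to be invoked.
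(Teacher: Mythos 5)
Your argument is correct, and it is exactly the reasoning the paper leaves implicit: the text only remarks that the lemma is ``easily seen'' to be equivalent to the displayed patching property, and your proof supplies the needed direction by passing the cyclic graded module $R/I$ through the localization functors $\overline{\res}_w$ and unwinding the definition of $\Tor_{\gr}(\overline{O}_K(N\backslash G))$. One small note: the final step does not actually require the observation that $R/I$ vanishes in all sufficiently dominant degrees; the identity $R_\eta\bar 1=\{0\}$ already gives $R_\eta=I_\eta$ directly, which is all that is used.
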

\begin{remark}
\label{rem:A}
{\rm
Lemma \ref{lem:A} holds for general root data $(X,\Delta, Y,\Delta^\vee)$ without the assumption $Y=Q^\vee$.
The general case easily follows from the special case.
}
\end{remark}

\subsection{}
We continue to assume $Y=Q^\vee$.
Let $K$ be a field equipped with $\zeta\in K^\times$.
We define an abelian category 
$\Mod(\CO_{\CB_{K,\zeta}})$
by
\begin{align}
\label{eq:qs1}
\Mod(\CO_{\CB_{K,\zeta}})
:=&
\Mod_{\gr}({O}_{K,\zeta}
(N\backslash G))/
\Tor_{\gr}({O}_{K,\zeta}
(N\backslash G))
\\
\nonumber
=&
\CP^{-1}\Mod_{\gr}({O}_{K,\zeta}
(N\backslash G)).
\end{align}
Here, 
$\Mod_{\gr}({O}_{K,\zeta}
(N\backslash G))$ is the category of graded ${O}_{K,\zeta}(N\backslash G)$-modules, and 
$\Tor_{\gr}(
O_{K,\zeta}(N\backslash G))$ is its 
 full subcategory consisting of 
 $M\in\Mod_{\gr}({O}_{K,\zeta}
(N\backslash G))$ such that for any $m\in M$ there exists some positive integer $N$ satisfying
\[
\lambda\in X,\;\langle\lambda,\alpha_i^\vee\rangle\geqq N\;
\Longrightarrow
\;
O_{K,\zeta}(N\backslash G; \lambda)m=\{0\}.
\]
Moreover, $\CP$ consists of morphisms in $\Mod_{\gr}({O}_{K,\zeta}
(N\backslash G))$ whose kernel and cokernel belong to $\Tor_{\gr}({O}_{K,\zeta}
(N\backslash G))$, and 
$\CP^{-1}\Mod_{\gr}({O}_{K,\zeta}
(N\backslash G))$
denotes the localization of the category so that the morphisms in $\CP$ turn out to be isomorphisms.

For $w\in W$ and $\lambda\in X^+$ we set
\[
\sigma_\lambda^w=
\Phi_{\nabla_{K,\zeta}(\lambda)}
(T_wv_{\lambda}\otimes v^{*}_\lambda)
\in O_{K,\zeta}(N\backslash G)\subset O_{K,\zeta}(G).
\]
Here, the images of 
${v}_\lambda
\in
{\nabla}_\BA(\lambda)
$
and
${v}^*_\lambda
\in
{\Delta}_\BA^*(\lambda)
$
in 
$
{\nabla}_{K,\zeta}(\lambda)
$
and
$
{\Delta}_{K,\zeta}^*(\lambda)
=
({\nabla}_{K,\zeta}(\lambda))^*
$
are denoted by 
${v}_\lambda$
and
${v}^*_\lambda$
respectively.
We have
\[
\sigma_\lambda^w\sigma_\mu^w=
\sigma_{\lambda+\mu}^w
\qquad
(\lambda, \mu\in X^+, w\in W).
\]
Set $
\CS_{K,\zeta}^w=
\{\sigma_\lambda^w
\mid
\lambda\in X^+\}
$ for $w\in W$.
It is known that 
the multiplicative set
$
\CS_{K,\zeta}^w
$
satisfies the left and right Ore conditions in the ring
$O_{K,\zeta}(N\backslash G)$ 
(see \cite{Jo}, \cite{T0}).
Hence the localization 
$(\CS_{K,\zeta}^w)^{-1}O_{K,\zeta}(N\backslash G)$ is a graded ring graded by $X$.
We set
\[
O_{K,\zeta}(\CB^w):=
((\CS_{K,\zeta}^w)^{-1}O_{K,\zeta}(N\backslash G))
(0).
\]
\begin{remark}
{\rm
In the case $w=1$ we can identify the $K$-algebra $O_{K,\zeta}(\CB^1)$ with a subalgebra of $\CO_{K,\zeta}(B^+)$ (see \cite[Proposition 4.5]{T0}).
For general $w$ we do not know such an explicit description of the algebra 
$O_{K,\zeta}(\CB^w)$.
In fact the ring  $O_{K,\zeta}(\CB^w)$ does depend on the choice of $w\in W$.
}
\end{remark}

We define an abelian category 
$\Mod(\CO_{\CB^w_{K,\zeta}})$
to be the category of left
$O_{K,\zeta}(\CB^w)$-modules;
\begin{equation}
\label{eq:qs2}
\Mod(\CO_{\CB^w_{K,\zeta}}):=
\Mod(O_{K,\zeta}(\CB^w)).
\end{equation}
Then we have a natural exact functor
\begin{equation}
\label{eq:qs3}
{\res}_w:\Mod(\CO_{\CB_{K,\zeta}})\to\Mod(\CO_{\CB^w_{K,\zeta}})
\end{equation}
induced by
\[
\Mod_{\gr}({O}_{K,\zeta}
(N\backslash G))
\ni
M
\mapsto
((\overline{\CS}_{K,\zeta}^w)^{-1}M)(0)
\in
\Mod(\CO_{\CB^w_{K,\zeta}}).
\]
\begin{example}
{\rm
Let $G=SL_2(\BC)$.
Let $\alpha$ be the unique positive root, and set 
$\rho=\alpha/2$.
Then we have $X=\BZ\rho$, $X^+=\BZ_{\geqq0}\rho$.
In this case it is well-known that the $X$-graded $K$-algebra 
$O_{K,\zeta}(N\backslash G)$ is generated by 
the elements $a_\zeta$, $b_\zeta$ of degree $\rho$ satisfying the fundamental relation
$
a_\zeta b_\zeta=\zeta b_\zeta a_\zeta
$.
We define a functor
$F:\Mod_{\gr}(O_{K,\zeta}(N\backslash G))
\to
\Mod_{\gr}(O_{K,1}(N\backslash G))
$ as follows.
For $M=\bigoplus_{n\in\BZ}M(n\rho)\in \Mod_{\gr}(O_{K,\zeta}(N\backslash G))$ we
set $F(M)=M$ as an $X$-graded $K$-module and define the action of $O_{K,1}(N\backslash G)$ on $F(M)=M$ by
\[
a_1m=a_\zeta m,
\qquad
b_1m=\zeta^nb_\zeta m
\qquad(m\in M(n\rho)).
\]
It is easily seen that this gives equivalences 
\begin{gather*}
\Mod_{\gr}(O_{K,\zeta}(N\backslash G))
\cong
\Mod_{\gr}(O_{K,1}(N\backslash G)),
\\
\Mod(\CO_{\CB_{K,\zeta}})
\cong
\Mod(\CO_{\CB_{K}}),\qquad
\Mod(\CO_{\CB^w_{K,\zeta}})
\cong
\Mod(\CO_{\CB^w_{K}})
\end{gather*}
of categories.
}
\end{example}

The main result of this paper is the following.
\begin{theorem}
\label{thm:main}
Recall that $Y=Q^\vee$.
Assume that $\zeta$ is transcendental over the prime field $K_0$ of $K$, or the multiplicative order of $\zeta$ is finite.
Then we have
\begin{equation}
M\in \Mod(\CO_{\CB_{K,\zeta}}),\;
{\res}_w(M)=0\;(\forall w\in W)
\;
\Longrightarrow
\;
M=0.
\end{equation}
\end{theorem}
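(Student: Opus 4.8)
The plan is, first, to reformulate the patching property algebraically and, second, to reduce the finite-order case to the situation in which $\zeta_\alpha=\pm1$ for all $\alpha\in\Delta$, which will be treated through Proposition \ref{prop:O1} and the classical Lemma \ref{lem:A}. For the reformulation put $R=O_{K,\zeta}(N\backslash G)$ and $R_\lambda=O_{K,\zeta}(N\backslash G;\lambda)$. An object of $\Mod(\CO_{\CB_{K,\zeta}})$, represented by a graded $R$-module $M$, is zero iff $M\in\Tor_{\gr}(R)$, that is, iff every homogeneous $m\in M$ is annihilated by $R_\lambda$ for all sufficiently dominant $\lambda\in X^+$. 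On the other hand, since $Y=Q^\vee$ the set $X^+$ generates the group $X$, and each $\sigma_\mu^w$ becomes invertible in the Ore localization $(\CS_{K,\zeta}^w)^{-1}R$; hence $\res_w M=0$ is equivalent to $(\CS_{K,\zeta}^w)^{-1}M=0$, which by the left Ore property means that every $m\in M$ is killed by some $\sigma_{\mu_w}^w$ with $\mu_w\in X^+$. Consequently the theorem is equivalent to the assertion $(\star)_G$: if $m$ is a homogeneous element of a graded $R$-module and for each $w\in W$ one has $\sigma_{\mu_w}^w m=0$ for some $\mu_w\in X^+$, then $R_\lambda m=0$ for all sufficiently dominant $\lambda\in X^+$. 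By Remark \ref{rem:A} this statement makes sense, and is what is needed, for an arbitrary root datum.

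Assume first that $\zeta_\alpha=\pm1$ for all $\alpha$. Then Proposition \ref{prop:O1} furnishes an isomorphism $R\cong\overline{O}_K(N\backslash G)$ of $X$-graded left $\overline{U}_K(\Gg)$-modules, and since the $w\mu$-weight space of $R_\mu\cong\nabla_{K,\zeta}(\mu)$ is one-dimensional this isomorphism sends $\sigma_\mu^w$ to a nonzero multiple of $\overline{\sigma}_\mu^w$. It is not a ring isomorphism, but I would verify that the two multiplication maps $R_\lambda\otimes R_\mu\to R_{\lambda+\mu}$ and $\overline{O}_K(N\backslash G;\lambda)\otimes\overline{O}_K(N\backslash G;\mu)\to\overline{O}_K(N\backslash G;\lambda+\mu)$ still correspond under it up to the sign twists created by the $k_i$-factors in the comultiplication — a comparison controlled by Lemma \ref{lem:Ttens}, Lemma \ref{lem:oTtens} and Remark \ref{rem:isomU}, together with the observation that each $k_i$ acts by $\pm1$ on the weight spaces that occur. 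Granting this, $(\star)_G$ is equivalent to its classical analogue, which is Lemma \ref{lem:A} in the form of Remark \ref{rem:A}. This verification is the technical core of the proof.

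Now suppose $\zeta$ has finite order, and form $({}^\sharp X,{}^\sharp\Delta,{}^\sharp Y,{}^\sharp\Delta^\vee)$ and ${}^\sharp G$ as in Section \ref{sect:QE}; since $\zeta_{{}^\sharp\alpha}=\pm1$ for all $\alpha$, the statement $(\star)_{{}^\sharp G}$ holds by the previous paragraph. Let $m$ be a homogeneous element of a graded $R$-module with $\sigma_{\mu_w}^w m=0$ for all $w$, and pick $\mu^*\in{}^\sharp X^+$ with $\mu^*-\mu_w\in X^+$ for every $w$ (possible because ${}^\sharp X^+$ contains arbitrarily dominant weights); then $\sigma_{\mu^*}^w=\sigma_{\mu^*-\mu_w}^w\sigma_{\mu_w}^w$ shows $\sigma_{\mu^*}^w m=0$ for every $w$. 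By \eqref{eq:extremalA} the extremal vector $T_wv_{\mu^*}\in\nabla_{K,\zeta}(\mu^*)_{w\mu^*}$ lies in the image of ${}^\sharp\overline{\nabla}_K(\mu^*)$, so under the embedding \eqref{eq:embA} restricted to the $N$-invariant parts the section $\sigma_{\mu^*}^w$ of $O_{K,\zeta}({}^\sharp N\backslash{}^\sharp G)$ is carried to a nonzero multiple of the section $\sigma_{\mu^*}^w$ of $R$ (both span the one-dimensional $w\mu^*$-weight space of $R_{\mu^*}$). Restricting the $R$-action on $m$ along \eqref{eq:embA} we obtain a homogeneous element of a graded $O_{K,\zeta}({}^\sharp N\backslash{}^\sharp G)$-module annihilated by all the sections $\sigma_{\mu^*}^w$, so $(\star)_{{}^\sharp G}$ gives $O_{K,\zeta}({}^\sharp N\backslash{}^\sharp G;\eta)m=0$ for all sufficiently dominant $\eta\in{}^\sharp X^+$. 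Fixing one such $\eta$ and invoking Proposition \ref{prop:XY} with $\eta$ in the role of $\mu$, we get $R_{\lambda+\eta}m=O_{K,\zeta}(N\backslash G;\lambda)\,O_{K,\zeta}({}^\sharp N\backslash{}^\sharp G;\eta)\,m=0$ whenever $\langle\lambda,\alpha_i^\vee\rangle$ is large enough for all $i$; as $\lambda+\eta$ then ranges over all sufficiently dominant weights of $X^+$, this proves $(\star)_G$, hence the theorem, for $\zeta$ of finite order. Finally, the case where $\zeta$ is transcendental over $K_0$ follows from the case $\zeta=1$ as in \cite{LR} (or, alternatively, by a flatness argument over $\BA=\BZ[q,q^{-1}]$: the pertinent cokernel of dual Weyl modules is a finitely generated $\BA$-module vanishing after the specialization $q\mapsto1$ over each prime field).

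The step I expect to be the real obstacle is the one described in the second paragraph: transporting Lemma \ref{lem:A} across the non-multiplicative isomorphism of Proposition \ref{prop:O1}. What should make it work is that the products in the homogeneous coordinate algebra are determined by one-dimensional spaces of homomorphisms between dual Weyl modules and by the braid operators, both of which are preserved up to signs by the identifications of Propositions \ref{prop:pm1} and \ref{prop:rep-pm1}; once that is settled, the quantum Frobenius homomorphism — via Proposition \ref{prop:XY}, which already absorbs the Kempf-type vanishing of Proposition \ref{prop:Ind} — propagates the conclusion from ${}^\sharp G$ to $G$.
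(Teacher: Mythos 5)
Your overall architecture matches the paper's: reduce the patching property to an algebraic statement about products in $O_{K,\zeta}(N\backslash G)$ (your $(\star)_G$ is an element-wise reformulation of the paper's Theorem~\ref{thm:main2}, which is equivalent to it), prove it in the case $\zeta_\alpha=\pm1$ by comparison with the classical Lemma~\ref{lem:A} through the non-multiplicative isomorphism of Proposition~\ref{prop:O1}, reduce the general finite-order case to $\pm1$ via the quantum Frobenius homomorphism combined with Proposition~\ref{prop:XY}, and dispose of the transcendental case by specialization. The Frobenius reduction you give is sound (the identification of the extremal sections $\sigma^w_{\mu^*}$ via \eqref{eq:extremalA} and the use of Proposition~\ref{prop:XY} are exactly as in the paper), and the transcendental case is standard.

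The gap, which you yourself flag, is in the $\zeta_\alpha=\pm1$ step, and your sketch of how to close it is not the argument that actually works in the paper and is not clearly correct as stated. You assert that the two multiplication maps $R_\lambda\otimes R_\mu\to R_{\lambda+\mu}$ and $\overline{O}_K(N\backslash G;\lambda)\otimes\overline{O}_K(N\backslash G;\mu)\to\overline{O}_K(N\backslash G;\lambda+\mu)$ ``correspond up to sign twists.'' That is a strong claim about the whole maps, which are intertwiners for two genuinely different tensor-product module structures (the comultiplications of $U_{K,\zeta}(\Gg)$ and of $\overline{U}_K(\Gg)\cong U'_{K,\zeta}(\Gg)$ differ by $k_i$-insertions, cf.\ Remark~\ref{rem:isomU}); the unique nonzero intertwiners and in particular their kernels are \emph{not} the same, so ``equal up to a sign'' needs to mean ``equal up to a weight-dependent sign on each weight space,'' and even then it is not obvious and not supplied. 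What the paper in fact proves, and what suffices, is the much weaker statement that the \emph{images} $\Xi(\nabla_{K,\zeta}(\lambda)\otimes T_wv_\mu)$ and $\overline{\Xi}(\overline{\nabla}_K(\lambda)\otimes\overline{T}_w\overline{v}_\mu)$ coincide under Proposition~\ref{prop:rep-pm1}. To get this the paper first uses Lemmas~\ref{lem:oTtens}, \ref{lem:Ttens}(ii) and \ref{lem:oTT} to slide $T_w$ and $\overline{T}_w$ outside and reduce to the case $w=1$ with $v_\mu$ a highest-weight vector, and then compares the annihilators $\{y\in U_{K,\zeta}(\Gn)\mid v^*_\lambda y=0\}$ and $\{y\in\overline{U}_K(\Gn)\mid\overline{v}^*_\lambda y=0\}$ directly through $U'_{K,\zeta}(\Gn)\cong\overline{U}_K(\Gn)$ (Lemma~\ref{lem:u-prime}) and the grading statement \eqref{eq:UU-prime}. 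This kernel computation, not a sign bookkeeping on the full products, is the missing content, and without it the $\pm1$ case (and hence the whole finite-order case) is not established.
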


\subsection{}
We recall the notion of quasi-schemes following Rosenberg \cite{R}.
Let $\GA$, $\GB$ be abelian categories.
We say that a morphism 
$f:X_{\mathfrak{A}}\to X_{\mathfrak{B}}$
between the corresponding virtual spaces is given if we are given an isomorphism class of a right exact functor $f^*:\GB\to\GA$ admitting a right adjoint $f_*:\GA\to\GB$. 
If moreover $f_*$ is exact and faithful, we say that $f$ is an affine morphism.
Let $\GA$, $\GA_\lambda$ ($\lambda\in\Lambda$) be abelian categories and assume that we are given morphisms $f_\lambda:X_{\GA_\lambda}\to X_\GA$.
We say that $\{f_\lambda\}_{\lambda\in\Lambda}$ is a Zariski cover of $X_\GA$ if each $f^*_\lambda$ is a localization of categories in the sense of \cite{GZ} and the patching property 
\[
M\in\GA,\quad
f_\lambda^*M=0\;\;(\forall\lambda\in\Lambda)\;
\Longrightarrow\;
M=0
\]
is satisfied.
Let $\GA$, $\GB$ be abelian categories, and assume that we are given a morphism 
$f:X_{\mathfrak{A}}\to X_{\mathfrak{B}}$
between the corresponding virtual spaces.
Then we say that $X_\GA$ is a quasi-scheme over $X_\GB$ if there exists a Zariski cover 
$\{f_\lambda\}_{\lambda\in\Lambda}$ of $X_\GA$ 
such that $f_\lambda$, $f\circ f_\lambda$ are affine.
Note that
an ordinary scheme $Y$ is regarded as a quasi-scheme $X_{\Mod(\CO_Y)}$.

By Theorem \ref{thm:main} we have the following.
\begin{corollary}
Recall that $Y=Q^\vee$.
Assume that $\zeta$ is transcendental over the prime field $K_0$ of $K$, or the multiplicative order of $\zeta$ is finite.
Then 
\eqref{eq:qs1}, \eqref{eq:qs2}, \eqref{eq:qs3} 
give a quasi-scheme $\CB_{K,\zeta}$ over $\Spec(K)$ with affine open covering $\CB_{K,\zeta}=\bigcup_{w\in W}\CB^w_{K,\zeta}$, in the sense of Rosenberg \cite{R}.
\end{corollary}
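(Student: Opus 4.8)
The plan is to obtain the corollary as a formal consequence of Theorem~\ref{thm:main} together with the standard dictionary of non-commutative projective geometry (Artin--Zhang \cite{AZ}, Rosenberg \cite{R}). Recall that, in the sense of \cite{R}, a \emph{quasi-scheme} is a Grothendieck abelian category equipped with a distinguished object (the structure sheaf); it is \emph{affine} when this category is equivalent to $\Mod(A)$ for a ring $A$; an \emph{open immersion} of quasi-schemes is a localization functor, i.e.\ an exact functor admitting a fully faithful right adjoint; and a family of open immersions into a quasi-scheme is an \emph{affine open covering} when each source is affine and the family of inverse-image functors is conservative, equivalently detects the zero object. Thus three assertions must be verified: that $\Mod(\CO_{\CB_{K,\zeta}})$ is a quasi-scheme, that each inclusion $\CB^w_{K,\zeta}\hookrightarrow\CB_{K,\zeta}$ is an affine open immersion, and that $\{\res_w\}_{w\in W}$ is a covering. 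The first two are essentially formal, and the third is exactly Theorem~\ref{thm:main}.

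For the first point, by \eqref{eq:qs1} the category $\Mod(\CO_{\CB_{K,\zeta}})$ is the Gabriel quotient of $\Mod_{\gr}(O_{K,\zeta}(N\backslash G))$ by the full subcategory $\Tor_{\gr}(O_{K,\zeta}(N\backslash G))$, and the latter is a localizing subcategory: it is closed under subobjects, quotients and extensions (noted in the text) and, the defining torsion condition being elementwise, it is closed under arbitrary direct sums. Hence the quotient is again a Grothendieck category, with distinguished object the image of $O_{K,\zeta}(N\backslash G)$, and so is a quasi-scheme. For the second point there is nothing to prove on affineness: by \eqref{eq:qs2}, $\Mod(\CO_{\CB^w_{K,\zeta}})$ is by definition the module category $\Mod(O_{K,\zeta}(\CB^w))$.

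It remains to see that each $\res_w$ of \eqref{eq:qs3} is an open immersion. By construction $\res_w$ is induced by the functor $M\mapsto((\CS_{K,\zeta}^w)^{-1}M)(0)$ on $\Mod_{\gr}(O_{K,\zeta}(N\backslash G))$. Since $\CS_{K,\zeta}^w$ satisfies the left and right Ore conditions (as recalled in the text, following \cite{Jo}, \cite{T0}), localization at $\CS_{K,\zeta}^w$ is exact; and since $\CS_{K,\zeta}^w=\{\sigma_\lambda^w\mid\lambda\in X^+\}$ consists of homogeneous units whose degrees $\lambda$ generate $X$ as a group (here $Y=Q^\vee$ is used), passing to the degree-zero part is an equivalence from the category of $X$-graded $(\CS_{K,\zeta}^w)^{-1}O_{K,\zeta}(N\backslash G)$-modules onto $\Mod(O_{K,\zeta}(\CB^w))$. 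Moreover any $M\in\Tor_{\gr}(O_{K,\zeta}(N\backslash G))$ is annihilated by $\sigma_\mu^w$ for $\mu$ with $\langle\mu,\alpha_i^\vee\rangle$ large, hence $(\CS_{K,\zeta}^w)^{-1}M=0$; so the functor kills the torsion subcategory and descends to an exact functor $\res_w$ on the quotient. The standard argument for a basic affine open of a non-commutative $\Proj$ (see \cite{AZ}, \cite{R}) then identifies $\Mod(\CO_{\CB^w_{K,\zeta}})$ with a reflective (Giraud) localization of $\Mod(\CO_{\CB_{K,\zeta}})$ with reflector $\res_w$; in particular $\res_w$ has a fully faithful right adjoint and $\Ker\res_w$ is localizing, so $\CB^w_{K,\zeta}\hookrightarrow\CB_{K,\zeta}$ is an affine open immersion.

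Finally, since each $\res_w$ is exact, the family $\{\res_w\}_{w\in W}$ is conservative precisely when it jointly detects the zero object, i.e.\ when $\res_w(M)=0$ for all $w\in W$ forces $M=0$; under our hypothesis on $\zeta$ this is the statement of Theorem~\ref{thm:main}. Combining the three points we conclude that $\CB_{K,\zeta}=\bigcup_{w\in W}\CB^w_{K,\zeta}$ is an affine open covering and that $\CB_{K,\zeta}$ is a quasi-scheme in the sense of \cite{R}. The only substantive ingredient is Theorem~\ref{thm:main}; the one formal step that genuinely requires care is the verification that the explicit functors $\res_w$ fit Rosenberg's axioms for an open immersion --- concretely, exhibiting the right adjoint and checking its full faithfulness --- but this is routine for a $\Proj$-type construction and is already implicit in the $q=1$ treatment recalled above.
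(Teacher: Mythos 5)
Your proposal is correct and matches the paper's (implicit) argument: the paper offers no separate proof of this corollary, treating it as an immediate consequence of Theorem \ref{thm:main} once the standard formal facts are granted --- that $\Tor_{\gr}$ is localizing so the quotient \eqref{eq:qs1} is a Grothendieck category, that Ore localization at $\CS^w_{K,\zeta}$ followed by taking the degree-zero part realizes each $\Mod(\CO_{\CB^w_{K,\zeta}})$ as an affine localization of $\Mod(\CO_{\CB_{K,\zeta}})$, and that conservativity of $\{\res_w\}_{w\in W}$ is exactly the patching property. Your write-up simply makes these routine verifications explicit, with the only substantive input being Theorem \ref{thm:main}, as in the paper.
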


\subsection{}
We no longer assume $Y=Q^\vee$.
Theorem \ref{thm:main} follows easily from the following Theorem applied to the special case $Y=Q^\vee$. 
\begin{theorem}
\label{thm:main2}
Assume that $\zeta$ is  transcendental over the prime field $K_0$ of $K$, or the multiplicative order of $\zeta$ is finite.
For any $\mu\in X^+$ we have  
\begin{equation}
O_{K,\zeta}(N\backslash G;\lambda+\mu)=\sum_{w\in W}O_{K,\zeta}(N\backslash G;\lambda)
\,
\sigma_\mu^w
\end{equation}
for $\lambda\in X^+$ such that $\langle\lambda,\alpha_i^\vee\rangle\gg0$ for any $i\in I$.
\end{theorem}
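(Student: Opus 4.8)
The plan is to treat the two hypotheses on $\zeta$ in turn, using Lusztig's quantum Frobenius homomorphism and Proposition~\ref{prop:XY} to reduce the root-of-unity case to the case $\zeta_\alpha=\pm1$, and then comparing the latter with the classical Lemma~\ref{lem:A}. When $\zeta$ is transcendental over $K_0$, the statement follows from the case $q=1$ by spreading out over $\BA=\BZ[q,q^{-1}]$, as in \cite{LR}: all the data occurring in the statement---the graded pieces $O_{\BA,q}(N\backslash G;\nu)$, which are free $\BA$-modules of finite rank (being $\cong\nabla_\BA(\nu)$), the multiplication maps, and the elements $\sigma_\mu^w$ defined through the operators $T_w$---are defined over $\BA$, so for fixed $\lambda,\mu$ the assertion is the surjectivity, after base change along $q\mapsto\zeta$, of an $\BA$-linear map $\Psi_\lambda$ between free $\BA$-modules of finite rank. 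Specializing instead at $q=1$, and using $T_i|_{q=1}=\overline{T}_i$ and $\Delta(e_i)|_{q=1}=e_i\otimes1+1\otimes e_i$, one obtains the classical homomorphism over $K_0$, which is surjective for $\langle\lambda,\alpha_i^\vee\rangle\gg0$ by Lemma~\ref{lem:A} and Remark~\ref{rem:A}; hence some maximal minor of $\Psi_\lambda$, viewed in $K_0[q,q^{-1}]$, is nonzero, and it does not vanish at $q=\zeta$ because $\zeta$ is transcendental over $K_0$. So $\Psi_\lambda\otimes_\BA K$ is surjective.

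From now on suppose $\zeta$ has finite order $\ell$; let $r$, $r_\alpha$, ${}^\sharp G$, ${}^\sharp X$ be as in Section~\ref{sect:QE}, and abbreviate $A_\nu:=O_{K,\zeta}(N\backslash G;\nu)$, $A^\sharp_\nu:=O_{K,\zeta}({}^\sharp N\backslash{}^\sharp G;\nu)$. First, it is enough to prove the theorem for $\mu\in{}^\sharp X^+$: for arbitrary $\mu\in X^+$ one has $r\mu\in{}^\sharp X^+$ (since $r_\alpha\mid r$) and $\sigma^w_{r\mu}=(\sigma^w_\mu)^r$, so if the theorem holds for $r\mu$ (threshold $N'$) then, for $\lambda\in X^+$ with $\langle\lambda,\alpha_i^\vee\rangle$ large enough that $\lambda':=\lambda-(r-1)\mu\in X^+$ and $\langle\lambda',\alpha_i^\vee\rangle\ge N'$,
\[
A_{\lambda+\mu}=A_{\lambda'+r\mu}=\sum_{w\in W}A_{\lambda'}\,(\sigma^w_\mu)^{r-1}\,\sigma^w_\mu\subseteq\sum_{w\in W}A_\lambda\,\sigma^w_\mu\subseteq A_{\lambda+\mu},
\]
because $A_{\lambda'}(\sigma^w_\mu)^{r-1}\subseteq A_{\lambda'+(r-1)\mu}=A_\lambda$. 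Now fix $\mu\in{}^\sharp X^+$. By \eqref{eq:extremalA} the section $\sigma^w_\mu$ lies in the subalgebra $O_{K,\zeta}({}^\sharp N\backslash{}^\sharp G)$ of $O_{K,\zeta}(N\backslash G)$, spanning there the same weight space as the corresponding extremal section for ${}^\sharp G$. Choosing $\nu\in{}^\sharp X^+$ with $\langle\nu,\alpha_i^\vee\rangle\gg0$ and applying Proposition~\ref{prop:XY} twice (with $\mu'=\nu+\mu$ and with $\mu'=\nu$), together with the theorem for ${}^\sharp G$ (with $\nu$ in the role of the large dominant weight), gives for $\lambda\in X^+$ large
\[
A_{\lambda+\nu+\mu}=A_\lambda\,A^\sharp_{\nu+\mu}=A_\lambda\sum_{w\in W}A^\sharp_{\nu}\,\sigma^w_\mu=\sum_{w\in W}A_{\lambda+\nu}\,\sigma^w_\mu,
\]
which is the assertion for $G$ and $\mu$ after renaming $\lambda+\nu$. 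Since $\zeta_{{}^\sharp\alpha}=\zeta^{r_\alpha^2}=\pm1$, this reduces the whole theorem to the case $\zeta_\alpha=\pm1$ for all $\alpha\in\Delta$ (which contains $\zeta=\pm1$, and in which ${}^\sharp G=G$, so no further reduction occurs).

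It remains to prove the theorem when $\zeta_\alpha=\pm1$ for all $\alpha$. Identify $\overline{U}_K(\Gg)$ with a subalgebra of $U_{K,\zeta}(\Gg)$ via Proposition~\ref{prop:pm1}. Combining Propositions~\ref{prop:O1} and~\ref{prop:rep-pm1}, Lemma~\ref{lem:oTT}, and the identifications \eqref{eq:oAlambda}, \eqref{eq:Alambda}, one gets an $X^+$-graded $K$-linear isomorphism $\Theta\colon O_{K,\zeta}(N\backslash G)\xrightarrow{\sim}\overline{O}_K(N\backslash G)$ with $\Theta(A_\lambda)=\overline{O}_K(N\backslash G;\lambda)$ and $\Theta(\sigma^w_\lambda)\in K^\times\overline{\sigma}^w_\lambda$. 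The map $\Theta$ is not a ring homomorphism when some $\zeta_i=-1$ (Remark~\ref{rem:isomU}), and controlling this failure is the crux. The discrepancy between the comultiplications of $U_{K,\zeta}(\Gg)$ and $\overline{U}_K(\Gg)$ is confined to the grouplike factors $k_i^{\pm1}$ appearing in $\Delta(e_i),\Delta(f_i)$, which act on each weight space by a sign; analysing this one shows that, for $f$ lying in a single $\overline{U}_K(\Gh)$-weight component of $A_\lambda$, one has $\Theta(f\,\sigma^w_\mu)=u\cdot\Theta(f)\,\overline{\sigma}^w_\mu$ with $u\in K^\times$ depending only on the weights. Hence $\Theta$ carries $\sum_{w}A_\lambda\,\sigma^w_\mu$ onto $\sum_{w}\overline{O}_K(N\backslash G;\lambda)\,\overline{\sigma}^w_\mu$, which by Lemma~\ref{lem:A} and Remark~\ref{rem:A} equals $\overline{O}_K(N\backslash G;\lambda+\mu)=\Theta(A_{\lambda+\mu})$ once $\langle\lambda,\alpha_i^\vee\rangle\gg0$; applying $\Theta^{-1}$ finishes the proof.

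I expect the genuine obstacle to be this last step, namely making precise that the non-multiplicativity of $\Theta$ amounts to weight-dependent signs and that these cancel on the right ideal generated by the extremal sections $\sigma^w_\mu$ (this is the ``favorable property'' of the graded isomorphism $\CR_{K,-1}\cong\CR_{K,1}$ mentioned in the introduction). The remaining ingredients---Proposition~\ref{prop:XY}, the classical Lemma~\ref{lem:A}, and the $q=\pm1$ comparison---are already available, and the two reduction steps above are purely formal.
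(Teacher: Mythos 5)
Your overall plan matches the paper's: spread out over $\BA$ for the transcendental case, reduce the finite-order case to $\zeta_\alpha=\pm1$ via the quantum Frobenius homomorphism and Proposition~\ref{prop:XY}, and in the $\zeta_\alpha=\pm1$ case transfer the classical Lemma~\ref{lem:A} through the graded-vector-space identification $O_{K,\zeta}(N\backslash G)\cong\overline{O}_K(N\backslash G)$. The first reduction and the Frobenius reduction are essentially the paper's arguments, up to minor repackaging (the paper allows any $\nu\in X^+$ with $\mu+\nu\in{}^\sharp X^+$, you specialize to $\nu=(r-1)\mu$; that works and is a bit cleaner).

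The genuine gap, as you anticipate in your last paragraph, is the $\zeta_\alpha=\pm1$ step. Your assertion that ``the discrepancy between the comultiplications is confined to grouplike factors $k_i^{\pm1}$, which act on each weight space by a sign,'' and that therefore $\Theta(f\,\sigma^w_\mu)=u\cdot\Theta(f)\,\overline{\sigma}^w_\mu$ with $u$ depending only on weights, is an unproved (and stronger than necessary) claim: once one expands $\Delta$ on monomials in the $\overline{f}_i^{(n)}$ the sign factors entangle the two tensor legs in a way that does not visibly reduce to a single weight-dependent unit, and one would have to control this on all of $\nabla_{K,\zeta}(\lambda)\otimes v_\mu$ at once. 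The paper proves instead only the set equality $\Xi(\nabla_{K,\zeta}(\lambda)\otimes v_\mu)=\overline{\Xi}(\overline{\nabla}_K(\lambda)\otimes\overline{v}_\mu)$ (equation \eqref{eq:pp}), which is all that is needed after applying $T_w$ and Lemma~\ref{lem:oTT}. And \eqref{eq:pp} is not obtained by tracking signs: the paper dualizes, passes to $\Xi^*\colon\Delta^*_{K,\zeta}(\lambda+\mu)\to\Delta^*_{K,\zeta}(\lambda)\otimes\Delta^*_{K,\zeta}(\mu)$, computes the annihilator $M$ of $\Xi(\nabla_{K,\zeta}(\lambda)\otimes v_\mu)$ as $v^*_{\lambda+\mu}A$ with $A=\{y\in U_{K,\zeta}(\Gn)\mid v^*_\lambda y=0\}$, rewrites this via \eqref{eq:UU-prime} as $v^*_{\lambda+\mu}A'$ with $A'\subset U'_{K,\zeta}(\Gn)$, and then matches it with the analogous $\overline{M}=\overline{v}^*_{\lambda+\mu}\overline{A}$ using Lemma~\ref{lem:u-prime} ($U'_{K,\zeta}(\Gn)\cong\overline{U}_K(\Gn)$). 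That computation — which crucially exploits that $v_\mu$ is a highest weight vector, so that only the $U(\Gn)$-part of $\Delta(y)$ survives against $\nabla\otimes v_\mu$ — is the missing content. You would need to supply it (or an equivalent) for your proposal to close.
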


From Theorem \ref{thm:main2} we obtain the following.
\begin{corollary}
\label{cor:main2}
Assume that $\zeta$ is  transcendental over the prime field $K_0$ of $K$, or the multiplicative order of $\zeta$ is finite.
For any $\mu\in X^+$ we have  
\[
O_{K,\zeta}(G)=\sum_{w\in W}
O_{K,\zeta}(G)
\,
\sigma_\mu^w.
\]
\end{corollary}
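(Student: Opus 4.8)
The plan is to reduce the corollary to Theorem~\ref{thm:main2} by adding one further ingredient: for a sufficiently dominant weight $\nu$, the subspace $O_{K,\zeta}(N\backslash G;\nu)$ generates $O_{K,\zeta}(G)$ as a left ideal — the noncommutative shadow of the classical fact that a sufficiently positive line bundle on the flag variety is globally generated.

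I would begin with the routine reductions. Since $\sigma^w_{\lambda+\mu}=\sigma^w_\lambda\sigma^w_\mu$ we have $O_{K,\zeta}(G)\,\sigma^w_{\lambda+\mu}\subseteq O_{K,\zeta}(G)\,\sigma^w_\mu$ for all $\lambda\in X^+$, so it suffices to prove the identity for $\mu$ replaced by $\lambda+\mu$ with $\lambda$ arbitrarily dominant; thus we may assume $\langle\mu,\alpha_i^\vee\rangle\gg0$ for every $i$. Theorem~\ref{thm:main2} applied to this $\mu$ then gives
\[
O_{K,\zeta}(N\backslash G;\nu)=\sum_{w\in W}O_{K,\zeta}(N\backslash G;\nu-\mu)\,\sigma^w_\mu\ \subseteq\ \sum_{w\in W}O_{K,\zeta}(G)\,\sigma^w_\mu=:I
\]
for every $\nu\in X^+$ with $\nu-\mu$ sufficiently dominant. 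Hence the left ideal $I$ contains $O_{K,\zeta}(N\backslash G;\nu)$ for all sufficiently dominant $\nu$, and the corollary will follow once we know
\[
O_{K,\zeta}(G)=O_{K,\zeta}(G)\,O_{K,\zeta}(N\backslash G;\nu)\qquad(\nu\in X^+\text{ sufficiently dominant}),
\]
equivalently $1\in O_{K,\zeta}(G)\,O_{K,\zeta}(N\backslash G;\nu)$: for such a $\nu$ with $\nu-\mu$ also sufficiently dominant we then get $O_{K,\zeta}(G)=O_{K,\zeta}(G)\,O_{K,\zeta}(N\backslash G;\nu)\subseteq O_{K,\zeta}(G)\cdot I=I$.

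To prove the displayed equality I would use the freedom in $\nu$ and take $\nu\in{}^\sharp X^+$, which contains arbitrarily dominant weights. Since $O_{K,\zeta}({}^\sharp N\backslash{}^\sharp G;\nu)\subseteq O_{K,\zeta}(N\backslash G;\nu)$ and $O_{K,\zeta}({}^\sharp G)\hookrightarrow O_{K,\zeta}(G)$ via the quantum Frobenius (see~\eqref{eq:embA}), it is enough to prove $1\in O_{K,\zeta}({}^\sharp G)\,O_{K,\zeta}({}^\sharp N\backslash{}^\sharp G;\nu)$ inside $O_{K,\zeta}({}^\sharp G)$, where now $\zeta_{{}^\sharp\alpha}=\pm1$ for all $\alpha$. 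If $\zeta=1$ this is the classical statement: $O_{K,\zeta}({}^\sharp G)=\overline{O}_K({}^\sharp G)$ is the coordinate algebra of the reductive group ${}^\sharp G_K$, $O_{K,\zeta}({}^\sharp N\backslash{}^\sharp G;\nu)\cong{}^\sharp\overline{\nabla}_K(\nu)$ is the module of global sections of the line bundle $\CL_\nu$ on the flag variety, and for dominant $\nu$ the bundle $\CL_\nu$ is globally generated, so the pulled-back sections have empty common zero locus on ${}^\sharp G_K$ and hence generate the unit ideal. If $\zeta=-1$ (and, by a generic-flatness argument as in the transcendental case of Theorem~\ref{thm:main2}, if $\zeta$ is transcendental) I would transport this conclusion along the isomorphism $O_{K,\zeta}({}^\sharp G)\cong\overline{O}_K({}^\sharp G)$ of Proposition~\ref{prop:O1} together with ${}^\sharp\nabla_{K,\zeta}(\nu)\cong{}^\sharp\overline{\nabla}_K(\nu)$ from Proposition~\ref{prop:rep-pm1}; this is exactly the $q=1\to q=-1$ comparison underlying Theorem~\ref{thm:main2}, and although that isomorphism is not a ring map it is an $\overline{U}_K({}^\sharp\Gg)$-bimodule isomorphism respecting weight spaces and the distinguished elements $\sigma^w$, which is what allows the ``no common zero'' property to be carried across.

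The hard part is precisely this last ``base-point-freeness'' input, and within it the $\zeta=-1$ (and transcendental) case: there $O_{K,\zeta}({}^\sharp G)$ is a genuine twist of a commutative ring, so one cannot argue with classical algebraic geometry directly, and must instead route the classical fact through the comparison isomorphisms of Propositions~\ref{prop:O1} and~\ref{prop:rep-pm1}. Everything else — the reductions above and the passage from the displayed equality, together with Theorem~\ref{thm:main2}, to the corollary — is formal.
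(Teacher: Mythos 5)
You have correctly identified the right intermediate target: reducing the corollary to Theorem~\ref{thm:main2} plus the claim that $O_{K,\zeta}(G)=O_{K,\zeta}(G)\,O_{K,\zeta}(N\backslash G;\nu)$. This is exactly what the paper does. Your reduction of the corollary to this pair of facts is also correct.

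The gap is in your proof of the generation statement $1\in O_{K,\zeta}(G)\,O_{K,\zeta}(N\backslash G;\nu)$. Your route is: reduce to $\nu\in{}^\sharp X^+$ via the quantum Frobenius embedding, establish the statement classically for $\zeta=1$ via global generation of $\CL_\nu$, and then ``transport'' the conclusion to $\zeta=-1$ (and to the transcendental case) along the bimodule isomorphism $O_{K,\zeta}({}^\sharp G)\cong\overline{O}_K({}^\sharp G)$ of Proposition~\ref{prop:O1} together with ${}^\sharp\nabla_{K,\zeta}(\nu)\cong{}^\sharp\overline{\nabla}_K(\nu)$ of Proposition~\ref{prop:rep-pm1}. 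The problem is that membership $1\in A\cdot B$ is a statement about the \emph{ring multiplication} of $O_{K,\zeta}({}^\sharp G)$, not about its $\overline{U}_K({}^\sharp\Gg)$-bimodule structure, and the isomorphism of Proposition~\ref{prop:O1} is explicitly not a ring map (Remark after Proposition~\ref{prop:O1}). Being a bimodule isomorphism respecting weight spaces and the elements $\sigma^w_\lambda$ tells you nothing, a priori, about where it sends products of the form $\varphi\psi$ with $\varphi\in O_{K,\zeta}({}^\sharp G)$ arbitrary; so you cannot conclude that a partition of unity $\sum a_ib_i=1$ on the $\zeta=1$ side pulls back to one on the $\zeta=-1$ side. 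The comparison in Subsection~\ref{subsection:X} that underlies Theorem~\ref{thm:main2} is tailored to products of the specific shape $\nabla_{K,\zeta}(\lambda)\cdot T_wv_\mu$ (i.e.\ against an extremal weight vector), and does not cover the products you need here. The same objection applies to the transcendental case, where you gesture at a generic-flatness argument without setting it up.

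The paper sidesteps this entirely with a much simpler, purely Hopf-algebraic proof, valid for \emph{every} $\nu\in X^+$ and every $\zeta$, with no reference to $\zeta=\pm1$ or to the quantum Frobenius: pick $\varphi\in O_{K,\zeta}(N\backslash G;\nu)$ with $\varepsilon(\varphi)=1$ (such a $\varphi$ exists by \eqref{eq:Alambda}, taking the image of $v_\nu$), write $\Delta(\varphi)=\sum_k\varphi_k\otimes\varphi_k'$, and use the (inverse) antipode identity
\[
\sum_k\bigl(S^{-1}\varphi_k'\bigr)\varphi_k=\varepsilon(\varphi)\cdot 1=1,
\]
together with the observation that $\Delta\bigl(O_{K,\zeta}(N\backslash G;\nu)\bigr)\subset O_{K,\zeta}(N\backslash G;\nu)\otimes O_{K,\zeta}(G)$, to get $1\in O_{K,\zeta}(G)\,O_{K,\zeta}(N\backslash G;\nu)$ directly. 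This is the noncommutative analogue of the geometric fact you wanted to use, but it requires no Nullstellensatz, no sufficiently-dominant hypothesis on $\nu$, and no comparison with the $\zeta=1$ theory. I would recommend replacing your base-point-freeness argument with this one; it closes the gap and also streamlines the proof considerably.
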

\begin{proof}
We first show 
\begin{equation}
\label{eq:OG}
O_{K,\zeta}(G)=
O_{K,\zeta}(G)O_{K,\zeta}(N\backslash G;\nu)
\end{equation}
for any $\nu\in X^+$.
Take $\varphi\in O_{K,\zeta}(N\backslash G;\nu)$ such that $\varepsilon(\varphi)=1$.
Then we have
\[
\sum_{k}
(S^{-1}\varphi'_{k})\varphi_{k}=\varepsilon(\varphi)=1,
\]
where $\Delta(\varphi)=\sum_k\varphi_k\otimes\varphi'_k$.
By the definition of $O_{K,\zeta}(N\backslash G;\nu)$ we have
\[
\Delta(O_{K,\zeta}(N\backslash G;\nu))
\subset O_{K,\zeta}(N\backslash G;\nu)
\otimes O_{K,\zeta}(G).
\]
Hence we have
$1\in O_{K,\zeta}(G)O_{K,\zeta}(N\backslash G;\nu)$, from which we obtain \eqref{eq:OG}.

Now let $\mu\in X^+$.
By Theorem \ref{thm:main2} there exists some $\lambda\in X^+$ such that 
\[
O_{K,\zeta}(N\backslash G;\lambda+\mu)=\sum_{w\in W}O_{K,\zeta}(N\backslash G;\lambda)
\,
\sigma_\mu^w.
\]
Then we obtain
\begin{align*}
O_{K,\zeta}(G)=&
O_{K,\zeta}(G)O_{K,\zeta}(N\backslash G;\lambda+\mu)
=
\sum_{w\in W}O_{K,\zeta}(G)
O_{K,\zeta}(N\backslash G;\lambda)
\,
\sigma_\mu^w
\\
\subset&
\sum_{w\in W}O_{K,\zeta}(G)
\,
\sigma_\mu^w
.
\end{align*}
\end{proof}

\subsection{}
The remainder of this section is devoted to the proof of Theorem \ref{thm:main2}.
We will derive it from Lemma \ref{lem:A}.

We first consider the case $\zeta$ is transcendental over the prime field $K_0$ of $K$.
This case was already dealt with in \cite{Jo0} and \cite{LR}.
We include its proof here for the sake of the readers.

Set $\CR=K_0[q,q^{-1}]$.
For $\lambda\in X^+$ we define a subspace
$
O_{\CR,q}(N\backslash G;\lambda)
$
of $\Hom_\CR(U_{\CR,q}(\Gg),\CR)$ by
\[
O_{\CR,q}(N\backslash G;\lambda)
=
\{
\Phi_{\nabla_{\CR,q}(\lambda)}(v\otimes v_\lambda^{*})
\mid
v\in \nabla_{\CR,q}(\lambda)
\},
\]
where
\[
\langle
\Phi_{\nabla_{\CR,q}(\lambda)}(v\otimes v_\lambda^{*}),
u
\rangle
=
\langle
v_\lambda^{*}, uv
\rangle\qquad
(u\in U_{\CR,q}(\Gg)).
\]
We set
\[
O_{\CR,q}(N\backslash G)
=
\bigoplus_{\lambda\in X^+}
O_{\CR,q}(N\backslash G;\lambda).
\]
It is a ring graded by $X$.
Moreover, we have 
\[
K\otimes_\CR O_{\CR,q}(N\backslash G)
\cong O_{K,\zeta}(N\backslash G)
\]
with respect to $s_\zeta: \CR\to K$\;\;($q\mapsto\zeta)$, and 
\[
K_0\otimes_\CR O_{\CR,q}(N\backslash G)
\cong\overline{O}_{K_0}(N\backslash G)
\]
with respect to $s_1: \CR\to K_0$\;\;($q\mapsto1)$.
Set 
\[
\widehat{\sigma}^w_\mu=\Phi_{\nabla_{\CR,q}(\mu)}(T_wv_{\mu}\otimes v_\mu^{*})\in
O_{\CR,q}(N\backslash G)
\]
for $\mu\in X^+$, and 
consider the $\CR$-linear map
\[
F_\lambda:
O_{\CR,q}(N\backslash G;\lambda)^{\oplus W}
\to
O_{\CR,q}(N\backslash G;\lambda+\mu)
\qquad
((f_w)_{w\in W}\mapsto
\sum_{w\in W}f_w
\widehat{\sigma}^w_\mu)
\]
between free $\CR$-modules of finite rank.
Assume $\langle\lambda,\alpha_i^\vee\rangle\gg0$.
We see by Lemma \ref{lem:A} and Remark \ref{rem:A} that
$
K_0\otimes_\CR F_\lambda
$ 
 with respect to  $s_1$ is surjective.
Hence 
$
K\otimes_\CR F_\lambda
$ with respect to  $s_\zeta$ is surjective when $\zeta$ is transcendental over $K_0$.
This is exactly what we need to show.
The proof of Theorem \ref{thm:main2} is now complete in the case $\zeta$ is transcendental over the prime field $K_0$ of $K$.
\subsection{}
\label{subsection:X}
We consider the case $\zeta_\alpha^2=1$ for any $\alpha\in\Delta$.
Let $\lambda, \mu\in X^+$.
Let 
\[
\overline{\Xi}:
\overline{\nabla}_K(\lambda)
\otimes
\overline{\nabla}_K(\mu)
\to
\overline{\nabla}_K(\lambda+\mu)
\]
and
\[
{\Xi}:
{\nabla}_{K,\zeta}(\lambda)
\otimes
{\nabla}_{K,\zeta}(\mu)
\to
{\nabla}_{K,\zeta}(\lambda+\mu)
\]
be the unique (up to a non-zero scalar multiple) non-zero homomorphisms of $\overline{U}_K(\Gg)$-modules and $U_{K,\zeta}(\Gg)$-modules respectively.
In view of \eqref{eq:oAlambda}, \eqref{eq:Alambda} and Lemma \ref{lem:A} (see also Remark \ref{rem:A}) we have only to show that 
$\Xi({\nabla}_{K,\zeta}(\lambda)\otimes T_wv_\mu)$ coincides with
$\overline{\Xi}(\overline{\nabla}_{K}(\lambda)\otimes \overline{T}_w\overline{v}_\mu)$ 
under the identification 
$
{\nabla}_{K,\zeta}(\lambda+\mu)
\cong
\overline{\nabla}_{K}(\lambda+\mu)
$
of Proposition \ref{prop:rep-pm1}.
By Lemma \ref{lem:oTtens} and Lemma \ref{lem:Ttens} (ii) we have
\begin{gather*}
\overline{\Xi}(\overline{\nabla}_{K}(\lambda)\otimes \overline{T}_{w}\overline{v}_\mu)
=
\overline{\Xi}(\overline{T}_{w}(
\overline{\nabla}_{K}(\lambda)\otimes \overline{v}_\mu))
=
\overline{T}_{w}\overline{\Xi}(
\overline{\nabla}_{K}(\lambda)\otimes \overline{v}_\mu),
\\
\Xi({\nabla}_{K,\zeta}(\lambda)\otimes T_{w}v_\mu)
=
\Xi( T_{w}({\nabla}_{K,\zeta}(\lambda)\otimes v_\mu))
=
T_{w}\Xi({\nabla}_{K,\zeta}(\lambda)\otimes v_\mu).
\end{gather*}
Hence by Lemma \ref{lem:oTT}
it is sufficient to show
\begin{equation}
\label{eq:pp}
\overline{\Xi}(
\overline{\nabla}_{K}(\lambda)\otimes \overline{v}_\mu)
=
\Xi({\nabla}_{K,\zeta}(\lambda)\otimes v_\mu).
\end{equation}
Setting
\begin{align*}
M=&
\{
m\in \Delta^*_{K,\zeta}(\lambda+\mu)
\mid
\langle m, \Xi({\nabla}_{K,\zeta}(\lambda)\otimes v_\mu)\rangle
=\{0\}\},
\\
\overline{M}=&
\{
\overline{m}\in \overline{\Delta}^*_{K}(\lambda+\mu)
\mid
\langle \overline{m}, \overline{\Xi}(
\overline{\nabla}_{K}(\lambda)\otimes \overline{v}_\mu)\rangle
=\{0\}\},
\end{align*}
\eqref{eq:pp} is equivalent to $\overline{M}=M$ under the identification 
$\Delta^*_{K,\zeta}(\lambda+\mu)
=
\overline{\Delta}^*_{K}(\lambda+\mu)$.
Let 
\[
\overline{\Xi}^*:
\overline{\Delta}^*_K(\lambda+\mu)
\to
\overline{\Delta}^*_K(\lambda)
\otimes
\overline{\Delta}^*_K(\mu)
\]
and
\[
{\Xi}^*:
{\Delta}^*_{K,\zeta}(\lambda+\mu)
\to
{\Delta}^*_{K,\zeta}(\lambda)
\otimes
{\Delta}^*_{K,\zeta}(\mu)
\]
be the unique (up to a scalar multiple) non-zero homomorphisms of $\overline{U}_K(\Gg)$-modules and $U_{K,\zeta}(\Gg)$-modules respectively.
Note that any $m\in \Delta^*_{K,\zeta}(\lambda+\mu)_{\lambda+\mu-\gamma}$ can be written in the form $m=v^*_{\lambda+\mu}y$ for $y\in U_{K,\zeta}(\Gn)_{-\gamma}$.
For such $m$ we have 
\begin{align*}
&\langle m, \Xi({\nabla}_{K,\zeta}(\lambda)\otimes v_\mu)\rangle
=
\langle \Xi^*(m), {\nabla}_{K,\zeta}(\lambda)\otimes v_\mu)\rangle
=
\langle \Xi^*(v^*_{\lambda+\mu}y), {\nabla}_{K,\zeta}(\lambda)\otimes v_\mu)\rangle
\\
=&
\langle (v^*_{\lambda}\otimes v^*_{\mu})y
, {\nabla}_{K,\zeta}(\lambda)\otimes v_\mu)\rangle.
\end{align*}
By $\Delta(y)\in y\otimes k_{-\gamma}+\sum_{\delta\in Q^+\setminus\{0\}} U_{K,\zeta}(\Gn)\otimes U_{K,\zeta}(\Gh)U_{K,\zeta}(\Gn)_{-\delta}$ we have
\[
\langle m, \Xi({\nabla}_{K,\zeta}(\lambda)\otimes v_\mu)\rangle
=
\langle v^*_{\lambda}y, {\nabla}_{K,\zeta}(\lambda)\rangle.
\]
Hence $M=v^*_{\lambda+\mu}A$ with
$
A=
\{
y\in U_{K,\zeta}(\Gn)\mid v^*_{\lambda}y=0\}
$.
By \eqref{eq:UU-prime} we can also write
$M=v^*_{\lambda+\mu}A'$ with
$
A'=
\{
y\in {U}'_{K,\zeta}(\Gn)\mid \overline{v}^*_{\lambda}y=0\}
$.
On the other hand by a similar argument we have
$\overline{M}=\overline{v}^*_{\lambda+\mu}\overline{A}$ with
$
\overline{A}=
\{
y\in \overline{U}_K(\Gn)\mid \overline{v}^*_{\lambda}y=0\}
$.
Therefore, we obtain $\overline{M}=M$ from Lemma \ref{lem:u-prime}. 
The proof of Theorem \ref{thm:main2} is now complete in the case $\zeta_\alpha^2=1$ for any $\alpha\in\Delta$.

\subsection{}
Finally, we consider the case where the multiplicative order of $\zeta\in K^\times$ is finite.
Denote the multiplicative order of $\zeta$ by $\ell$, and
consider the root datum $({}^\sharp X,{}^\sharp \Delta, {}^\sharp Y,{}^\sharp \Delta^\vee)$ as in Section \ref{sect:QE}.
Recall that  we have an embedding 
\[
O_{K,\zeta}({}^\sharp N\backslash {}^\sharp G)
\subset 
O_{K,\zeta}(N\backslash G)
\] 
of algebras satisfying 
\[
O_{K,\zeta}({}^\sharp N\backslash {}^\sharp G;\lambda)
\subset
O_{K,\zeta}(N\backslash G;
\lambda)
\]
for $\lambda\in {}^\sharp X^+$.
Let $\mu\in X^+$.
We can take $\nu\in X^+$ and $\mu'\in{}^\sharp X^+$ such that $\mu+\nu=\mu'$.
By the result of the preceding subsection together with \eqref{eq:extremalA}, \eqref{eq:extremalB} we obtain
\[
O_{K,\zeta}({}^\sharp N\backslash {}^\sharp G;\lambda'+\mu')
=
\sum_{w\in W}
O_{K,\zeta}({}^\sharp N\backslash {}^\sharp G;\lambda')
\sigma^w_{\mu'}
\]
for some $\lambda'\in {}^\sharp X^+$.
Let $\xi\in X^+$  such that $\langle\xi,\alpha_i^\vee\rangle\gg0$ for any $i\in I$.
Then by  Proposition \ref{prop:XY} we have
\begin{align*}
&
O_{K,\zeta}(N\backslash G;\xi+\lambda'+\nu+\mu)
=O_{K,\zeta}(N\backslash G;\xi+\lambda'+\mu')
\\
=&
O_{K,\zeta}(N\backslash G;\xi)
O_{K,\zeta}({}^\sharp N\backslash {}^\sharp G;\lambda'+\mu')
=
\sum_{w\in W}
O_{K,\zeta}(N\backslash G;\xi)
O_{K,\zeta}({}^\sharp N\backslash {}^\sharp G;\lambda')
\sigma^w_{\mu'}
\\
\subset&
\sum_{w\in W}
O_{K,\zeta}(N\backslash G;\xi+\lambda')
\sigma^w_{\mu'}
=
\sum_{w\in W}
O_{K,\zeta}(N\backslash G;\xi+\lambda')
\sigma^w_{\nu}\sigma^w_{\mu}
\\
\subset&
\sum_{w\in W}
O_{K,\zeta}(N\backslash G;\xi+\lambda'+\nu)
\sigma^w_{\mu}.
\end{align*}
The proof of Theorem \ref{thm:main2} is complete.

\section*{acknowledgment}
A part of this work was done while the author was staying at East China Normal University in 2019 May
as a Zijiang Professor.
I would like to thank the members of the Department of Mathematics of East China Normal University, especially Bin Shu, for their hospitality.

\bibliographystyle{unsrt}

\end{document}